\documentclass[10pt,reqno,final]{amsart}
\usepackage{ifplatform}
\usepackage{epsfig,amssymb,amsmath,version}
\usepackage{amssymb,version,graphicx,fancybox,mathrsfs,multirow}
\usepackage{url,hyperref}
\usepackage[notcite,notref]{showkeys}

\usepackage{subfigure}%,mathabx
\usepackage{color,xcolor}
\usepackage{cases}
\usepackage{mathtools}
\usepackage{wrapfig}

\catcode`\@=11 \theoremstyle{plain}
\@addtoreset{equation}{section}   % Makes \section reset 'equation' counter.

\@addtoreset{figure}{section}
\renewcommand\thefigure{\thesection.\@arabic\c@figure}
\renewcommand{\thefigure}{\arabic{section}.\arabic{figure}}
\newtheorem{thm}{\bf Theorem}

\newtheorem{cor}{\bf Corollary}
\newtheorem{prop}{Proposition}[section]

\newtheorem{lmm}{\bf Lemma}

\newenvironment{lemma}{\begin{lmm}}{\end{lmm}}
\theoremstyle{remark}
\newtheorem{rem}{\bf Remark}[section]
\theoremstyle{definition}
\newtheorem{defn}{\bf Definition}[section]

\numberwithin{table}{section}

\def \rd {{\rm d}}

\def  \Proj {\Pi}
\def  \GProj {\pi}

\def  \RefDomain {I_{\rm ref}}
\newcommand{\bs}[1]{\boldsymbol{#1}}
\newcommand{\bv}[1]{{\rm BV}(#1)}
\newcommand{\bvNorm}[2]{\|D #1\|(#2) }

\def\MD{{\mathcal {D}}}
\renewcommand \wedge \times

\def \rFraSNorm {U_{\hat \theta}^{\alpha}({\omega;\RefDomain})}
\def \rFraSLNorm {{U_{-1+}^{\alpha, m}}({\omega;\RefDomain})}
\def \rFraSRNorm {U^{\alpha,m}_{ 1 -}({\omega;\RefDomain})}

\def \rFraMiSRNorm {U^{(\alpha,m)}_{x_{1}-}({\tilde\omega;I_{1}})}

\newcommand{\CFrD}[2]{{}^{C\!} {\mathcal{ D}}_{#2}^{#1}}

\newcommand{\FraNorms}[3]{U_{#2}^{#1}(#3)}
\newcommand{\FraSpace}[3]{{\mathcal U}_{#2}^{#1}(#3)}

\newcommand{\T} {\top}

\newcommand{\ndg}[1]{| \kern -.25mm \|{#1}| \kern -.25mm \|}
\def\mesh {\mathcal{T}}
\newcommand{\Vertiii}[1]{{\vert\kern-0.25ex\vert\kern-0.25ex\vert #1
		\vert\kern-0.25ex\vert\kern-0.25ex\vert}}

\newcommand{\VertV}[1]{{\vert\kern-0.25ex\vert\kern-0.25ex\vert #1
		\vert\kern-0.25ex\vert\kern-0.25ex\vert}}

\begin{document}
\bibliographystyle{plain}
%\bibliographystyle{unsrt}

%\graphicspath{{./Figures/}}
\baselineskip 13pt

% Gauss--Radau  projection
% Polynomial   singular functions

% (GGR)
\title[Convergence of LDG methods] {
 Optimal convergence of  local discontinuous Galerkin methods for convection-diffusion equations}
\author[
	W. Liu,\, R. Xie,\, L. Wang $\&$\,   Z. Zhang
	]{
		\;\; Wenjie Liu${}^{1}$,\;\;  Ruiyi Xie${}^{1}$,\;\; Li-Lian Wang${}^{2}$  \;\;and\;\;Zhimin Zhang${}^{3}$
		}
	
	\thanks{${}^{1}$School of Mathematics, Harbin Institute of Technology, Harbin 150001, PR China.
The research of the first author was supported by the
National Natural Science Foundation of China (No. 12271128), and the Natural Science Foundation of Heilongjiang Province of China (No. YQ2023A002). Emails: liuwenjie@hit.edu.cn, 22B912009@stu.hit.edu.cn.
The first author is the corresponding author.
		\\
		\indent ${}^{2}$Division of Mathematical Sciences, School of Physical
		and Mathematical Sciences, Nanyang Technological University,
		637371, Singapore. The research of this author is partially supported by Singapore MOE AcRF Tier 1 Grant: RG95/23,  and Singapore MOE AcRF Tier 2 Grant: MOE-T2EP20224-0012. Email: lilian@ntu.edu.sg.
        \\
		\indent ${}^{3}$Department of Mathematics, Wayne State University, Detroit, MI, 48202, USA. Email: ag7761@wayne.edu.}
\begin{abstract}
%In this paper, we obtain the  optimal rates of convergence of the $p$-version local discontinuous Galerkin methods for convection-diffusion equation consider in Castillo, Cockburn, Sch\"{o}tzau and Schwab [Math. Comp., 71(238):455–478, 2002]. We derive explicit and sharper error bounds for Gauss--Radau projections with interior or endpoint singularities in  fractional spaces.
%Our estimates are optimal in the polynomial approximation order $p$ in fractional spaces. The theoretical results are confirmed by numerical experiments.
The $hp$ local discontinuous Galerkin (LDG) method proposed by Castillo et al. [Math. Comp.,~71 (238): 455–478, 2002] has been shown to be an efficient approach for solving convection–diffusion equations. However, theoretical analysis indicates that, for solutions with limited spatial regularity, the method exhibits suboptimal convergence in $p$, suffering a loss of one order, comparing to numerical experiments. The purpose of this paper is to close the gap between theoretical estimates and numerical evidence. This is accomplished by establishing new approximation results for the associated Gauss–Radau projections of functions in suitable function spaces that can optimally characterize the regularity of singular solutions. We show that such a framework arises naturally and enables the study of various types of singular solutions, with full consistency between theoretical analysis and numerical results.
This investigation sheds light on the resolution of the suboptimality in $p$ observed in the literature for several other types of DG schemes in different settings.
\end{abstract}

\keywords{Local discontinuous Galerkin methods, Convection-diffusion equation, Gauss--Radau projections,  Optimal convergence, Singular solutions}
 \subjclass[2020]{41A10, 41A25, 65M60, 65N30}
\maketitle

%\vspace*{-10pt}
\section{Introduction}

Discontinuous Galerkin (DG) finite element methods, first introduced in the early 1970s for solving the neutron transport equation \cite{Lasaint1974}, have evolved into a powerful framework for the numerical solution of a wide range of partial differential equations (PDEs).
The local discontinuous Galerkin (LDG) method of Cockburn and Shu \cite{Cockburn1998SJNAlocal} extends DG techniques to general convection--diffusion operators by introducing suitable auxiliary variables and numerical fluxes, building on earlier DG discretizations of viscous terms such as the scheme of Bassi and Rebay \cite{Bassi1997JCPhigh} for the compressible Navier--Stokes equations.
DG methods are widely employed in practice owing to their flexibility on arbitrarily unstructured meshes, high efficiency in parallel implementation, and their capability to handle complex geometries and interfaces as well as to accommodate general $hp$-adaptivity.

At the level of theoretical analysis, there is a significant difference between the results on optimal convergence rates in terms of the mesh size $h$ and the polynomial degree $p$.
For the former, the theoretical issues of optimal $h$-convergence—and even superconvergence (for $Q_k$)—have long been resolved for triangular meshes with $P_k$ polynomial spaces and tensor-product meshes with $Q_k$ polynomial spaces (see \cite{2015CaoSJNASuperconvergence,2018CaoSJNASuperconvergence,
2016CaoMCSuperconvergence,2014CaoSJNASuperconvergence}). Recently, new progress has also been made for $P_k$ spaces on tensor-product meshes (see \cite{2026CaoMCUnified}).
In contrast, even in the one-dimensional case, the optimal convergence order with respect to the polynomial degree $p$ remains unresolved. The 2002 work of Castillo et al. ~\cite{Castillo2002MC} obtained only sub-optimal results, and since then there has been no new theoretical progress. It is in this context that our work arises.

Our focus is on the prototypical $hp$-DG scheme for convection--diffusion problems proposed by Castillo et al.~\cite{Castillo2002MC}.
This method is built upon a judicious construction of local fluxes along with delicate analysis of the associated Gauss-Radau projections for  the convergence proof. Indeed, it has inspired many follow-ups in both algorithm development and related analysis.
For instance, Perugia and Sch\"otzau \cite{Perugia2002JSC} introduced the $hp$-LDG method for diffusion problems on unstructured meshes with hanging nodes in two and three space dimensions,
together with  further development for low-frequency time-harmonic Maxwell equations \cite{Perugia2003MC$hp$}. Sch\"otzau, Schwab, and Toselli \cite{Schoetzau2002SJNAMixed} constructed mixed $hp$-DGFEMs for incompressible flows and derived a priori $hp$-error estimates for the Stokes problem.
Moreover, Celiker and Cockburn \cite{Celiker2007MCSuperconvergence} studied superconvergence of numerical traces for a broader class of DG-type and hybridized methods for one-dimensional convection--diffusion problems.
Houston, Sch\"otzau, and Wihler~\cite{Houston2007MMMASEnergy} derived $hp$-version a posteriori error bounds in a mesh-dependent energy norm for interior penalty DG discretizations of elliptic boundary-value problems.
As some related work on DG methods for hyperbolic problems, Meng, Shu, and Wu \cite{Meng2016MCOptimal} proved optimal a priori error estimates for upwind-biased fluxes in time-dependent linear transport, while Meng and Ryan \cite{Meng2017NMDiscontinuous} used divided-difference error estimates to obtain superconvergence and postprocessing results for one-dimensional nonlinear scalar conservation laws.  The list is by no means exhaustive.
\begin{comment}
{\color{blue}
% Related superconvergence results for one-dimensional upwind DG methods for %linear
% hyperbolic equations were proved by
% %Cao, Zhang, and Zou
% \cite{2018CaoSJNASuperconvergence,2014CaoSJNASuperconvergence}.
% For one-dimensional linear parabolic equations, Cao and Zhang \cite{2016CaoMCSuperconvergence} established analogous superconvergence properties for alternating flux LDG methods.
In the one-dimensional setting, superconvergence was established for upwind DG discretizations of linear and scalar nonlinear hyperbolic equations, and for LDG discretizations of linear parabolic equations with alternating numerical fluxes; see \cite{2018CaoSJNASuperconvergence,
2016CaoMCSuperconvergence,2014CaoSJNASuperconvergence}.
The superconvergence properties of the DG method for two-dimensional hyperbolic conservation laws  proposed in
\cite{2015CaoSJNASuperconvergence,2026CaoMCUnified}.
}
\end{comment}

However, it has long been known theoretically  that some $hp$-LDG schemes may suffer a loss of convergence order in $p$ for solutions with limited spatial regularity.
In fact, the error analysis in~\cite{Castillo2002MC} shows suboptimal convergence in $p$, with a loss of one order as observed in the numerical tests.
Related $p$-suboptimality is also reported in~\cite{Perugia2002JSC,Perugia2003MC$hp$}, where the error estimates are optimal in the mesh size $h$ but slightly suboptimal in $p$ (losing half an order on general meshes in~\cite{Perugia2002JSC}, and losing half a power of $p$ in~\cite{Perugia2003MC$hp$}).
Other DG formulations exhibit analogous $p$-suboptimal behaviour under limited regularity.
For advection--diffusion--reaction problems, Houston, Schwab, and S\"uli~\cite{Houston2002SJNADiscontinuous} proved bounds that are $hp$-optimal in the hyperbolic regime but $p^{1/2}$-suboptimal in the elliptic case, while Georgoulis et al.~\cite{Georgoulis2010JSCsuboptimality} showed a loss of half an order in $p$ for the $p$-version interior penalty DG method for second-order elliptic boundary value problems under standard Sobolev regularity assumptions.
% Together with the $p$-suboptimality identified for the $hp$-LDG scheme in \cite{Castillo2002MC}, these results indicate that standard Sobolev-based regularity descriptions do not, in general, capture the $p$-version behaviour for nonsmooth solutions.
% Similar effects are also well known for conforming finite elements; see, for example, \cite{Babuska1994SRp,Eriksson1986SJNASome}.

The aim of the present work is to close the theoretical gap between the error estimates in \cite{Castillo2002MC} and  the  numerical evidence therein.
More precisely, we establish a new approximation framework for the Gauss--Radau projections of singular functions in suitable  spaces that can optimally characterize their regularity.  We show that such a framework naturally arises and allows us to study various types of singular solutions (e.g., interior singularities fitting or unfitting the grids) and obtain optimal order convergence in $p$ for the LDG scheme.
Although our analysis is restricted to the one-dimensional setting in \cite{Castillo2002MC}, the proposed framework also sheds light on possible remedies for the observed \(p\)-suboptimality in other settings. These extensions will be reported in future work.

The paper is organized as follows. In Section \ref{LDGEA}, we recall the LDG method for convection-diffusion equations, along with the error estimate results presented in \cite{Castillo2002MC}.
% In Section \ref{sect:Spaces}, we recall the space of functions of bounded variation and present the new fractional spaces.
% The explicit and sharper error estimates for Gauss--Radau projections of functions with singularities are proven in Section \ref{sectmain}.
In Section~\ref{Sect3:Main-Result}, we derive optimal $p$--version error estimates for solutions with endpoint singularities via a fractional-regularity characterization based on Legendre expansion coefficients.
We extend the LDG analysis to interior singularities and distinguish the fitted and unfitted cases in Section~\ref{two cases}.
Finally, in Section \ref{ConcludingRem}, we conclude this paper with some remarks.

\section{The $hp$-LDG scheme and its a priori estimates in \cite{Castillo2002MC}}
\label{LDGEA}
\setcounter{equation}{0}
\setcounter{lmm}{0}
\setcounter{thm}{0}

In this section, we present the $hp$-LDG method for a model convection--diffusion equation studied in \cite{Castillo2002MC} and state the main convergence result therein.
We then discuss its suboptimality for singular solutions, which serves as the motivation for introducing a new framework to obtain optimal estimates in the forthcoming section.

% \subsection{The LDG scheme and its a priori error estimates in  \cite{Castillo2002MC}}
We consider the convection-diffusion equation with the initial condition and  Dirichlet boundary conditions as in \cite{Castillo2002MC}:
\begin{subequations}\label{CDEq}
\begin{align}
u_{t}+(c\; u-d\; u_{x})_{x}=f \quad & \text { in }\;\; Q_{T}:=I \times J, \label{CDEq-1}\\
u(x,0)=u_{\rm ic}(x) \quad & \text { on }\;\; I:=(a, b), \label{CDEq-2}\\
u(a,t)=g_a(t),\quad u(b,t)=g_b(t) \quad & \text { on }\;\; J:=(0, T),  \label{CDEq-3}
\end{align}
\end{subequations}
% \begin{subequations}
% %\begin{dcases}
% \label{CDEq}
% \begin{equation}\label{CDEq-1}
% u_{t}+(c\; u-d\; u_{x})_{x}=g \quad \text { in }\;\; Q_{T}:=(a, b) \times(0, T),
% \end{equation}
% with the initial condition
% \begin{equation}\label{CDEq-2}
% u|_{t=0}=u_{0} \quad \text { on }\;\; \Omega:=(a, b),
% \end{equation}
% and the Dirichlet boundary conditions
% \begin{equation}\label{CDEq-3}
% u(a)=u_{D}(a), \quad u(b)=u_{D}(b) \quad \text { on }\;\; J:=(0, T),
% \end{equation}
% \end{subequations}
%\end{dcases}
where the velocity $c>0$ and diffusion coefficient $d \geq 0.$ Note that in the purely convective case $(d=0)$, only the Dirichlet boundary condition at $x=a$ is imposed.

%\subsection{Related works}
%the discontinuous Galerkin (DG) and the local discontinuous Galerkin (LDG) methods for solving one-dimensional time dependent linear conservation laws and convection-diffusion equations.

To formulate the LDG scheme, we
partition the interval $I$ into the non-overlapping sub-intervals:
$$\mathcal{T}=\big\{I_{j}:=(x_{j-1}, x_{j}):~ j=1, \ldots, M\big\},$$
with the nodes: $a=x_{0}<x_{1}<\cdots<x_{M-1}<x_{M}=b.$
Denote $h_{j}:=|I_{j}|=x_{j}-x_{j-1}$ and
$h:=\max\{h_{j}\}.$
% Given   we partition the given interval $\Omega$ as
% (cf. \cite{Castillo2002MC}):
%
%The weak formulation we are going to use is obtained as follows. First, we introduce the new variable $q:=\sqrt{d} u_{x}$ and the ``flux" function
%$$
%\mbf{h}=(h_{u}, h_{q})^{T}:=(c u-\sqrt{d} q,-\sqrt{d} u)^{T},
%$$
%and rewrite \eqref{CDEq-1}-\eqref{CDEq-3} in the form
%\begin{equation}\label{CDEq-4}
%\begin{array}{ll}
%u_{t}+(h_{u})_{x}=f & \text { in }\;\; Q_{T}, \\
%q+(h_{q})_{x}=0 & \text { in } \;\;Q_{T}, \\
%u|_{t=0}=u_{0} & \text { on }\;\; \Omega, \\
%u(a)=u_{D}(a), \quad u(b)=u_{D}(b) & \text { on }\;\;(0, T) .
%\end{array}
%\end{equation}
 To the mesh $\mathcal{T}$, we associate the so-called broken Sobolev space
$$
H^{1}(I;\mathcal{T}):=\big\{v: I\to \mathbb{R}\,:\, v|_{I_{j}} \in H^{1}(I_{j}),\; j=1, \ldots, M\big\}.
$$
Let $(u, v)_{I_j}:=\int_{I} u(x) v(x) \rd x$ be the inner product of $L^2(I_j),$ and denote
$u(x_{j}^{\pm}):=\lim _{x \to x_{j}^{\pm}} u(x).$
We further define the approximation space of piecewise polynomials:
$$
V_{h}^p=\big\{\phi: I \to \mathbb{R}\,:\,\phi|_{I_{j}} \in \mathcal{P}_{p_{j}},\;  j=1, \ldots, M\big\},
$$
where   $\mathcal{P}_{p}$ denotes the set of  polynomials of degree at most $p$.
The LDG scheme proposed in  \cite{Castillo2002MC}
for  \eqref{CDEq} is to find $u_{h}^{p}(\cdot, t),\, q_{h}^{p}(\cdot, t)\in V_{h}^p$ for  $t\in (0,T)$
such that
\begin{equation}\label{LDG-1}
\begin{aligned}
((u_{h}^{p})_{t}, v)_{I_{j}}-(c\,u_{h}^{p}-\sqrt{d}\, q_{h}^{p}, v_{x})_{I_{j}}+\hat{h}_{u} v\Big|_{x_{j-1}^{+}} ^{x_{j}^{-}} &=(g, v)_{I_{j}}, \\
(q_{h}^{p}, r)_{I_{j}}+\sqrt{d}(u_{h}^{p}, r_{x})_{I_{j}}+\hat{h}_{q} r\Big|_{x_{j-1}^{+}} ^{x_{j}^{-}} &=0, \\
(u_{h}^{p}(\cdot, 0), v)_{I_{j}} &=(u_{\rm ic}, v)_{I_{j}},
\end{aligned}
\end{equation}
for all $v, r \in V_{h}^p$ and $j=1, \ldots, M,$ where
the numerical flux in \cite{Castillo2002MC} was chosen as
\begin{equation}\label{numflux}
(\hat{h}_{u},\hat{h}_{q})^{\T}=
\begin{cases}\big(c\, g_{a}(t)-\sqrt{d}\, q(a^{+}),-\sqrt{d}\, g_{a}(t)\big)^{\T}, & \text { for } j=0, \\[6pt]
 \big(c\, u(x_{j}^{-})-\sqrt{d}\, q(x_{j}^{+}),-\sqrt{d}\, u(x_{j}^{-})\big)^{\T}, & \text { for } j=1, \ldots, M-1, \\[6pt]
\big(\hat{u}(b)-\sqrt{d}\, q(b^{-}),-\sqrt{d}\, g_{b}(t)\big)^{\T}, & \text { for } j=M,
\end{cases}
\end{equation}
with
%\begin{equation}\label{ubsUB}
%\hat{u}(b)=c/2\,( u(b^{-})+g_{b}(t))-\max \{c / 2, \max \{1, p_{M}\} d / h_{M}\}(g_{b}(t)-u(b^{-})).
%\end{equation}
\begin{equation}\label{ubsUB}
\hat{u}(b)=c\, u(b^{-})-\max \{c / 2, \max \{1, p_{M}\} d / h_{M}\}(g_{b}(t)-u(b^{-})).
\end{equation}
Note again that in the purely convective case, $d = 0$, this numerical flux is the standard upwinding flux used by the original DG method.

We recall the a priori error estimates  stated  in \cite[Theorem 3.4]{Castillo2002MC}.
\begin{thm}
\label{Castillo2002}
{\bf (see \cite[Theorem 3.4]{Castillo2002MC}).}
Let $u$ be the solution of \eqref{CDEq} and
$q=\sqrt{d}\, u_x,$ and let $(u_{h}^{p},q_{h}^{p})^{\T}$ be the LDG solution of \eqref{LDG-1}-\eqref{ubsUB}. Assume that $u$ has the regularity
$\|u^{(s+1)}\|_{\mathcal{E}, T}<\infty$ for $s\ge 0,$  where the norm $\|\cdot\| _{\mathcal{E}, T}$ is defined as
$$
\|w\|_{\mathcal{E}, T}:=2 \sup _{0 \leq t \leq T}\|w(\cdot,t)\|+\int_0^T\left\|w_t(\cdot, t)\right\| \rd t+3 \sqrt{d}\left\|w_x\right\|_{Q_T}.
$$
Then we have the error estimate
%$\bs{e}=(u,q)^{\top}-(u_N,q_N)^{\T}:$
% denote the error between the exact solution and the LDG approximation with numerical flux \eqref{numflux} and polynomial degree $p$ on each interval. Then, for completely arbitrary meshes, the error in the energy norm satisfies the inequality
\begin{equation}\label{estimate-of-enorm}
\|(u-u_{h}^{p})(\cdot,T)\|+\|q-q_{h}^{p}\|_{Q_{T}} \leq C(s) \frac{h^{\min \{s, p\}+1}}{\max
\{1,p\}^{s+1}}\big\|u^{(s+1)}\big\|_{\mathcal{E}, T},
\end{equation}
where $p=\min\{p_j\}$ and  $C(s)$ depends on $s$ but is independent of $h,$ $p$, and $u$.
\end{thm}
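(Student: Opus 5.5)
The plan is to follow the standard LDG energy argument combined with approximation properties of Gauss--Radau projections, as in \cite{Castillo2002MC}.

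\textbf{Error splitting.} On each $I_j$ introduce the one-sided Gauss--Radau projections. Let $\Pi^- u$ denote the polynomial of degree $p_j$ that matches $u$ at $x_j^-$ and is $L^2$-orthogonal to $\mathcal{P}_{p_j-1}$ on $I_j$. Let $\Pi^+ q$ denote the analogous projection matching $q$ at $x_{j-1}^+$. These choices follow the upwind/alternating structure of the flux \eqref{numflux}. We then split
\[
u-u_h^p=(u-\Pi^-u)+(\Pi^-u-u_h^p),\qquad q-q_h^p=(q-\Pi^+q)+(\Pi^+q-q_h^p).
\]

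\textbf{Energy identity for the discrete part.} Galerkin orthogonality, i.e.\ consistency of the scheme \eqref{LDG-1}, gives an error equation for the discrete errors $(e_u,e_q):=(\Pi^-u-u_h^p,\Pi^+q-q_h^p)$ in $V_h^p$, with the projection errors as data. Take $v=e_u$ and $r=e_q$ and sum over $j$. Then:
\begin{itemize}
\item The stability of the LDG flux gives $\tfrac12\tfrac{d}{dt}\|e_u\|^2+\|e_q\|^2+{}$(nonnegative jump terms) on the left-hand side.
\item By the defining properties of the projections, the volume terms $(u-\Pi^-u,(e_u)_x)_{I_j}$ and $(q-\Pi^+q,(e_q)_x)_{I_j}$ vanish, because the derivatives lie in $\mathcal{P}_{p_j-1}$.
\item The interior flux terms also vanish, because the projection errors vanish exactly at the upwind or downwind points used by the flux.
\end{itemize}
What remains on the right-hand side is $((u-\Pi^-u)_t,e_u)$, the term $(q-\Pi^+q,e_q)$, and a boundary contribution at $x=b$ from the penalty in \eqref{ubsUB}. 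The boundary term is absorbed using the penalty size $\max\{1,p_M\}d/h_M$ together with a $p$-explicit inverse/trace estimate. Integrating in time and applying Gronwall and Cauchy--Schwarz yields
\[
\|e_u(T)\|+\|e_q\|_{Q_T}\lesssim \|(u-\Pi^-u)(\cdot,0)\|+\int_0^T\|(u-\Pi^-u)_t\|\,\rd t+\|q-\Pi^+q\|_{Q_T}+\text{boundary term}.
\]
We add the initial $L^2$-projection error to this, and use the triangle inequality to return to $u-u_h^p$ and $q-q_h^p$.

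\textbf{Projection estimates.} The heart of the argument is the bound
\[
\|w-\Pi^\pm w\|_{L^2(I_j)}\le C(s)\frac{h_j^{\min\{s,p_j\}+1}}{\max\{1,p_j\}^{s+1}}\|w^{(s+1)}\|_{L^2(I_j)},
\]
applied with $w=u,\,u_t$ and with $q=\sqrt d\,u_x$. The factor $3\sqrt d\|u_x^{(s+1)}\|$ in $\|\cdot\|_{\mathcal E,T}$ accounts for the $q$-term.

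To prove this bound, map to the reference interval and expand $w=\sum_n \hat w_n L_n$ in Legendre polynomials. Then
\[
w-\Pi^- w=\sum_{n>p}\hat w_n\,(L_n-L_p),
\]
using $L_n(1)=1$, with the analogous formula for $\Pi^+$. Hence the error is bounded by the $L^2$ tail plus the term $|\sum_{n>p}\hat w_n|\,\|L_p\|$. The tail satisfies the usual $p^{-(s+1)}$ bound. For the endpoint term, bound $|\sum_{n>p}\hat w_n|$ by Cauchy--Schwarz against the weights relating $\hat w_n$ to the coefficients of $w^{(s+1)}$; these scale like $n^{-(s+1)}$ up to constants depending on $s$. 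Since $\|L_p\|\sim p^{-1/2}$, the endpoint term is also $O(p^{-(s+1)})$.

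\textbf{Main obstacle.} The step I expect to be the main obstacle is this endpoint (point-value) term, together with the boundary penalty term. It is precisely where a half-power or full power of $p$ can leak, and it needs a careful weighted Cauchy--Schwarz argument on the Legendre coefficients. The result is sharp only in this Sobolev scale, which is the suboptimality the paper later revisits.
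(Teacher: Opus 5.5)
\textbf{Verdict: genuine gap.} You correctly identify the hard step, the point-value term in the Gauss--Radau error, but the argument you give for it does not close.

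\textbf{What is right.} Your architecture matches the paper's outline for this cited result: the energy argument with $(\pi^-u,\pi^+q)$ leading to \eqref{globalbound}--\eqref{ATcase}, followed by approximation bounds for the Gauss--Radau projections. Your treatment of the vanishing volume and interior flux terms is correct. So is your representation $w-\Pi^-w=\sum_{n>p}\hat w_n(L_n-L_p)$. Since $L_p\perp L_n$ for $n>p$, it gives exactly $\|w-\Pi^-w\|^2=\|w-\Pi_pw\|^2+\frac{2}{2p+1}\big(\sum_{n>p}\hat w_n\big)^2$.

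\textbf{Where it fails.} To reach $p^{-(s+1)}$ you need $\big|\sum_{n>p}\hat w_n\big|\lesssim p^{-s-1/2}\|w^{(s+1)}\|$. No Cauchy--Schwarz bound on $\sum_{n>p}|\hat w_n|$ delivers this.

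The natural Parseval-type relation between $\{\hat w_n\}$ and $w^{(s+1)}$ is the Gegenbauer-weighted one, cf.\ \eqref{Unend-b0}, whose weights grow like $n^{2s+1}$. So $\hat w_n$ is controlled as $n^{-(s+1/2)}$ times an $\ell^2$ sequence, not $n^{-(s+1)}$. Cauchy--Schwarz then gives only $|\sum_{n>p}\hat w_n|\lesssim p^{-s}$, i.e.\ an $L^2$ error $O(p^{-s-1/2})$, which is a half-order loss.

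The unweighted norm does not rescue a coefficientwise argument. From $(2n+1)L_n=L_{n+1}'-L_{n-1}'$ one gets $\hat w_n=\frac{\hat w'_{n-1}}{2n-1}-\frac{\hat w'_{n+1}}{2n+3}$. Now take $\hat w'_n=1/\log n$ for $n\in4\mathbb N$ and $0$ otherwise. Then $w'\in L^2$, yet $\sum_n|\hat w_n|=\infty$.

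\textbf{The missing idea.} You need the cancellation in the \emph{signed} tail. The same relation telescopes to $\sum_{n>p}\hat w_n=\frac{\hat w'_p}{2p+1}+\frac{\hat w'_{p+1}}{2p+3}$, which is the content of Lemma~\ref{LPiMinus} (\cite[Proposition 3.12]{Castillo2002MC}). For $j=p,p+1$ one has $|\hat w'_j|\le\sqrt{(2j+1)/2}\,\|w'-\Pi_{j-1}w'\|\lesssim p^{1/2-s}\|w^{(s+1)}\|$. Hence $|\sum_{n>p}\hat w_n|\lesssim p^{-s-1/2}\|w^{(s+1)}\|$, and multiplying by $\|L_p\|=\sqrt{2/(2p+1)}$ gives the $O(p^{-s-1})$ bound.

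\textbf{The boundary term has the same issue.} It is absorbed by Young's inequality against the penalty contribution $c_{11}(b)|e_u(b^-)|^2$. This leaves $\frac{d}{c_{11}(b)}\|(\pi^+q-q)(b^-,\cdot)\|_{(0,T)}^2$, as in \eqref{ATcase}. Closing this needs a sharp bound on the point value of a projection error, not an inverse/trace inequality.

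That point value is only of size $(h/p)^{s+1/2}$, via $|(\omega-\Pi_p^+\omega)(1)|=\frac{2}{2p+1}|\hat\omega'_p|$. The factor $h_M/\max\{1,p_M\}$ supplied by the penalty restores $(h/p)^{s+1}$. So you must also prove $|(\omega-\Pi^\pm_p\omega)(\pm1)|\lesssim p^{-s-1/2}\|\omega^{(s+1)}\|$. This again comes from the coefficient identity, not from summing a tail.
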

%The theoretical analysis in leads to error bounds that are suboptimal in terms of $p$ by one order for nonsmooth solutions $x^{\pi} t$ (see \cite[Sec. 5.3]{Castillo2002MC}).

To motivate our analysis, we outline  two main ingredients for the   proof of Theorem \ref{Castillo2002}.
The first is the introduction of the Gauss-Radau projections (see \cite[
(3.1)-(3.2)]{Castillo2002MC}).  Let $\pi^\pm: H^{1}(I;\mathcal{T})\to V_h^p$ be the projections defined by  $\pi^{ \pm} u|_{I_j}=\pi_{p_j}^{\pm} (u|_{I_j})$   on each sub-interval $I_j=\left(x_{j-1}, x_j\right), j=1, \ldots, M,$
where the local projections
 satisfy the following $p_j+1$ conditions:
\begin{equation}\label{7projections}
\begin{split}
 (\pi^{\pm}_{p_j} \omega- \omega, v)_{I_{j}}=0,\quad  & \forall\, v \in \mathcal{P}_{p_{j}-1}(I_{j}),  \\[4pt]
 \pi^{-}_{p_j}  \omega(x_{j})= \omega(x_{j}^{-}),\; \quad  & \pi^{+}_{p_j}  \omega(x_{j-1})= \omega(x_{j-1}^{+}) .
\end{split}
\end{equation}
Then in the analysis, $(\pi^{-} u, \pi^{+} q)$ are applied to $(u,q),$ respectively.
The second is the following important estimate (see \cite[Pages 460-461]{Castillo2002MC}):
\begin{equation}\label{globalbound}
\begin{split}
   \|(u-u_{h}^{p})(\cdot,T)\|  +\|q-q_{h}^{p}\|_{Q_{T}} & \leq  \sqrt{A(T)}
    +\int_{0}^{T} \|(\pi^{-} u_{t}-u_{t})(\cdot, t)\|\, \rd t
    \\
    &\quad +\sup _{0\le t\le T}\|(\pi^{-} u-u)(\cdot, t)\|
    +\|\pi^{+} q-q\|_{Q_{T}},
    \end{split}
\end{equation}
where
\begin{equation}\label{ATcase}
A(T)=\|\pi^{-} u_{\rm ic}-u_{\rm ic}\|^{2}+\|\pi^{+} q-q\|_{Q_{T}}^{2}+\frac{d}{c_{11}(b)}\|(\pi^{+} q-q)(b^{-}, \cdot)\|_{(0, T)}^{2},
\end{equation}
with $c_{11}(b)=\max \{c / 2, \max \{1, p_{M}\} d / h_{M}\}.$
With the aid of \eqref{globalbound}, the a priori estimate \eqref{estimate-of-enorm} can be derived from the  approximation results of $\pi^{\pm}$ stated in  \cite[Lemmas 3.1-3.2]{Castillo2002MC}.

As pointed out in \cite{Castillo2002MC},  the estimate \eqref{estimate-of-enorm} is optimal in both $h$ and $p$ for smooth solutions, but it is suboptimal in $p$ for singular solutions.  One prototypical example examined in \cite{Castillo2002MC}  tested on the scheme is the exact solution $u(x, t)=x^\pi t.$
The main   theoretical predictions and numerical observations therein are summarized as follows.
\begin{itemize}
\item[(i)] For the $h$-version (with fixed $p$),  the expected  order in $h$ predicted by Theorem \ref{Castillo2002} is $\min \{\pi+0.5, p+1\}$ and $\min \{\pi-0.5, p+1\},$ respectively, for $d=0$ and $d\not=0,$ which is optimal and confirmed by various numerical evidences.
\medskip

\item[(ii)]   For the $p$-version (with fixed $h$),  Theorem \ref{Castillo2002} only leads to the convergence order $2\pi$ in the hyperbolic case (i.e., $d=0$) and $2\pi-2$ in the
convection-diffusion case (i.e., $d\not=0$), but the optimal order  is expected to be  $2\pi+1$ (for $d=0$) and
$2\pi-1$ (for $d\not=0$), respectively (also see \cite[Figure 3]{Castillo2002MC} for numerical justifications). Thus, a loss of one order in $p$ was claimed for the estimates in Theorem \ref{Castillo2002} in both cases: $d=0$ and $d\not=0$.
\end{itemize}

In contrast to the claim (ii) for the case $d\not=0$, we conduct the same numerical tests  as in \cite[Figure~3]{Castillo2002MC} and find that the loss of order in $p$ should be $1/2$ instead. In other words,  the optimal convergence order is $2\pi-\frac 3 2$ rather than $2\pi-1 $ for $d\not=0.$  For comparison, we snapshot  the  \cite[Figure~3]{Castillo2002MC} in Figure~\ref{Figs_1} (left) (where the slopes the error curves were not depicted), and plot the  recovered   results in Figure~\ref{Figs_1} (right) with slopes, which clearly indicate the optimal orders:  $2\pi+1$ for $d=0$ and $2\pi-\tfrac 32$ for $d\not=0.$
We remark that we have adopted the same setup as in \cite{Castillo2002MC} (a fixed uniform mesh with four elements while increasing $p$), so in all experiments we employ a TVD Runge--Kutta time-stepping method \cite{Shu1988JCPEfficient} with sufficiently small time steps so that the overall numerical error is dominated by the spatial error.

% Figure~\ref{Figs_1} presents the $p$--version convergence for the prototypical
% singular solution $u(x,t)=x^\pi t$, which was also used in
% \cite{Castillo2002MC} to illustrate the $p$--suboptimality of the classical
% estimate \eqref{estimate-of-enorm} for nonsmooth solutions.
% For the $p$--version (with fixed $h$), Theorem~\ref{MainEstiA} predicts the
% convergence order $2\pi+1$ for the hyperbolic case ($d=0$) and
% $2\pi-\frac{3}{2}$ for the convection--diffusion case ($d=0.1$), which is
% clearly confirmed by the numerical results.

% For comparison, the right figure is taken from \cite[Figure~3]{Castillo2002MC}.
% For the $p$--version (with fixed $h$), Theorem~\ref{Castillo2002} only yields
% the convergence order $2\pi$ in the hyperbolic case (i.e., $d=0$) and $2\pi-2$
% in the convection--diffusion case (i.e., $d\neq0$).
% The comparison in Figure~\ref{Figs_1} indicates that the present theory captures
% the $p$--version convergence for singular solutions more accurately than the
% existing analysis.

\begin{figure}[!ht]
	\begin{center}
		{~}\hspace*{-20pt}	 \includegraphics[width=0.43\textwidth]{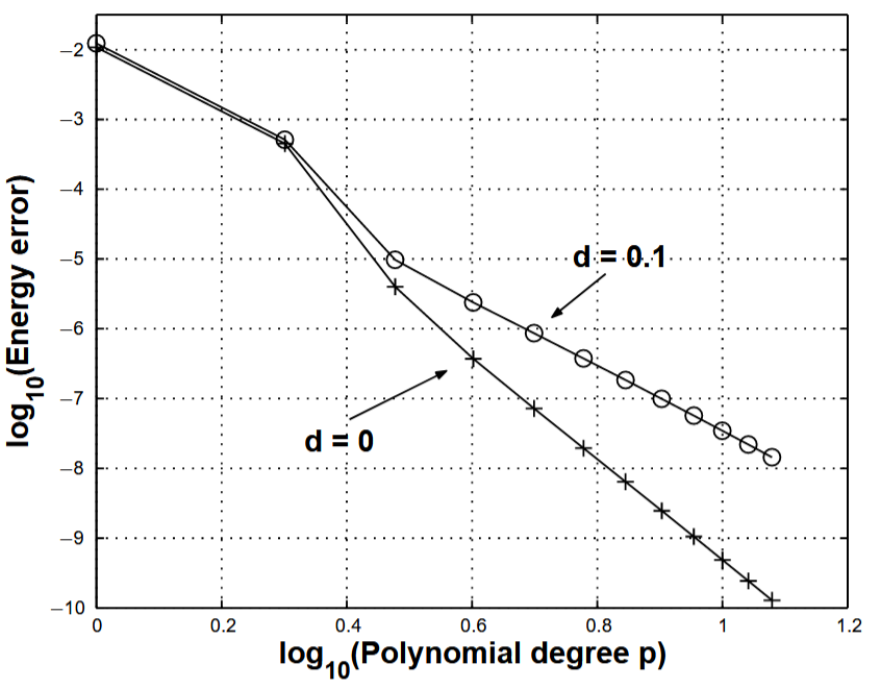}\qquad
\includegraphics[width=0.45\textwidth]{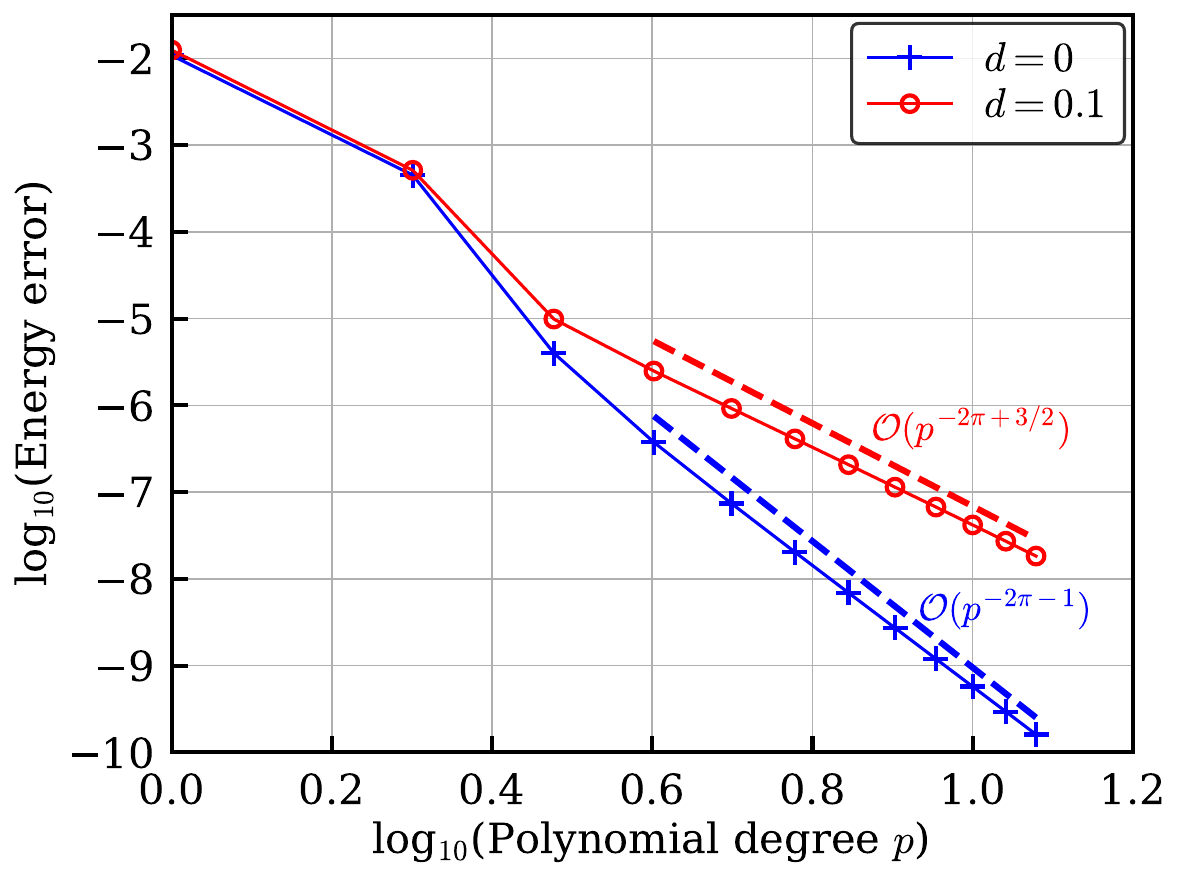}
\caption{Error plots of the $p$-version LDG for the singular solution $u(x,t)=x^\pi t$ with the convection coefficient: $c=0.1$ and the diffusion coefficients: $d=0.1$ and $d=0$.
Left: Snapshot of \cite[Figure~3]{Castillo2002MC}.
Right: Recovery of the same results with slopes of the error plots.}
\label{Figs_1}
	\end{center}
\end{figure}

The remainder of the paper is to close the gap. We shall
develop a new framework to  derive the optimal estimates for singular solutions and explore its extensions.

\section{Optimal error estimates for singular solutions}\label{Sect3:Main-Result}
\setcounter{equation}{0}
\setcounter{lmm}{0}
\setcounter{thm}{0}

In this section, we focus on the analysis of endpoint singularities. The key is to provide an optimal characterization of the {\em fractional regularity} with a delicate analysis of the coefficients of Legendre polynomial expansions, following the spirit of \cite{Liu19MC-Optimal}.

\subsection{Motivating examples}
\label{SecMotivat}
To illustrate the idea, we consider a more general
singular solution
$u(x,t)=x^\alpha (c_1+c_2 x^\beta+\cdots),~
x\in \hat I:=(0,1),~0<t\le T,$
where $\alpha>0$ is non-integer, $\beta> 0$, and $c_1, c_2$ are smooth functions of $t.$ Thus, $u(x,t)$ exhibits a leading algebraic singularity of order $\alpha$, which particularly includes
$
u(x,t) =\mu (x,t)\, x^\alpha,
$
as a special case, where $\mu(x,t)$ is smooth (so it can be expanded in Taylor expansion at $x=0$).
Let $\alpha\in (k-1,k)$ with integer $k\ge 1.$
% where $k:=\lceil\alpha\rceil$ and $\{\alpha\}\in (0,1)$ is the fractional part of $\alpha.$
Then,
direct calculation leads to
$$
\partial_x^i u=\Big(c_1\frac{\Gamma(\alpha+1)}{\Gamma(\alpha-i+1)}  +c_2\frac{\Gamma(\alpha+\beta+1)}{\Gamma(\alpha+\beta-i+1)} x^{\beta} +\cdots\Big) x^{\alpha-i}, \quad i=1,\ldots, k,
$$
which implies $u,\partial_x u, \cdots, \partial_x^k u\in  L^1(\hat I).$
%with $I_1=(0,x_1).$
We next show that $u(x,t)$ admits a ``fractional derivative'' of order $k<\alpha+1<k+1,$  which is the key to recovering of the optimal order $2\alpha+1$  (for $d=0$) in $p$ (see Theorem \ref{EndPi}).  Indeed, using the Riemann-Liouville fractional integral formula (see e.g., \cite[p. 81]{2006KilbasBookTheory}):
$${\mathcal I}_{0+}^{s} x^{\nu}=\frac{1}{\Gamma(s)}\int_{0}^{x}
 \frac{y^\nu}{(x-y)^{1-s}}
 \,{\rm d}y=\frac{\Gamma(\nu+1)}{\Gamma(\nu+s+1)} x^{\nu+s},\quad \nu>-1,\;\; s>0, $$
 we immediately derive
\begin{equation*}
\begin{split}
 %\CFrD{\alpha}{0+} u(x,t)&=
 {\mathcal I}_{0+}^{k-\alpha} \partial_x^k u
 %=\big(I_{0+}^{k-\alpha} u^{(k)}\big)(x,t)
 & =c_1\frac{\Gamma(\alpha+1)}{\Gamma(\alpha-k+1)}
 {\mathcal I}_{0+}^{k-\alpha} x^{\alpha-k}\,+
c_2\frac{\Gamma(\alpha+\beta+1)}{\Gamma(\alpha+\beta-k+1)}
 {\mathcal I}_{0+}^{k-\alpha} x^{\alpha+\beta-k}\,+\cdots\\
 &=c_1\frac{\Gamma(\alpha+1)}{\Gamma(\alpha-k+1)}
  \Gamma(\alpha-k+1)+ c_2\frac{\Gamma(\alpha+\beta+1)}{\Gamma(\alpha+\beta-k+1)}
 \frac{\Gamma(\alpha+\beta-k+1)}{\Gamma(\beta+1)} x^{\beta}\, +\cdots
 \\
 &
=c_1\Gamma(\alpha+1)+
c_2\frac{\Gamma(\alpha+\beta+1)}{\Gamma(\beta+1)} x^{\beta} +\cdots= \CFrD{\alpha}{0+} u,
% \, t+\cdots
% \in W^{\lceil \beta \rceil,1}(I_1).
\end{split}
\end{equation*}
 which turns out to be  the order-$\alpha$ (left) Caputo fractional derivative of $u$ as defined in e.g., \cite{2006KilbasBookTheory}. It is clear that we can take at least one more derivative as
 $\partial_x (\CFrD{\alpha}{0+} u)=\mathcal{O}(x^{\beta-1})\, \in L^1(I).$ Interestingly, if we revisit  the example  $u(x, t)=x^\pi t$ in
 \cite{Castillo2002MC}, then  $\CFrD{\pi}{0+} u=\Gamma(\pi+1) t \in C^\infty$.  Indeed, as to be shown in the proof of Theorem~\ref{EndPi} (see \eqref{Unend-0}),
the differentiability property
$\partial_x^m (\CFrD{\alpha}{0+} u)$ and the boundedness of
$|\partial_x^i (\CFrD{\alpha}{0+} u)(0+)|$ for all $0 \le i \le m$
directly determine the order in $p$ for the Gauss--Radau projection errors.
More importantly, the functional spaces and norms that arise are naturally
dictated by the exact formula for the Legendre expansion coefficients,
obtained through integer and fractional integration by parts
(see \eqref{Unend-0}), even though the notation appears a bit involved.

With the above insights, we now introduce the space of functions to optimally characterize the fractional regularity of the singular solutions.  Let $\Lambda:=(\hat a,\hat b)$ be a generic finite interval, which will be taken as $I_1=(a,x_1)$ and $I_{\rm ref}=(-1,1),$ etc. later on. Let $W^{k,1}(\Lambda)\subset L^1(\Lambda)$ for integer $k\ge 1$  be the usual  Sobolev space as in Admas \cite{2003AdamsBookSobolev}.
It is seen from the above example that we need to  characterize the regularity of  both $u$
and its fractional derivative, leading to the space with two regularity indexes below. More precisely, for $\alpha\in (k-1,k)$ and $ k,m\in \mathbb N,$ we define the space
\begin{equation}\label{defn:FracSpa-AB}
{\mathcal U}_{\hat a+}^{\alpha, m}(\Lambda):=\big\{u\,:\, u\in {W}^{k,1}(\Lambda)\;\; {\rm and}\;\;
\CFrD{\alpha}{\hat a+} u \in  {W}^{m,1}(\Lambda) \big\},
\end{equation}
 associated with the semi-norm:
\begin{equation}
\label{defn:FracSpa-3-1}
%{U_{a+}^{\alpha, k}}:=V_{\Omega}\big[v^{(k)}\big]+|\sin (s \pi)| \sum_{i=0}^k\big|v^{(i)}(a+)\big|,
{U_{\hat a+}^{\alpha, m}}({u;\Lambda}):=\big\|\MD^{m}\big(\CFrD{\alpha}{\hat a+}   u\big)\big\|_{{L^1}(\Lambda)}+ \sum_{i=0}^{m-1}\big|\MD^i \big(\CFrD{\alpha}{\hat a+}   u\big) (\hat a+)\big|,
\end{equation}
where ${\mathcal D}^m$ is the
$m$-th integer-order derivative operator. Note that for any  $u\in {\mathcal U}_{\hat a+}^{\alpha, m},$
 \(\MD^i \big(\CFrD{\alpha}{\hat a+}   u\big) (\hat a+)\) for \(i=0,1,\dots,m-1\) are  finite, in view of
 the Sobolev embedding property (see \cite[P.~212]{2011BrezisBookFunctional}).

 \subsection{Main result}
 Now, we are ready to present our optimal convergence result for the class of solutions with (left) endpoint singularities.
 Define the
   broken Sobolev space associated to the mesh $\mathcal T:$
\begin{equation}\label{SpaceL0}
{\mathcal W}^{\alpha, m, s}_{a+}(I;\mathcal{T}):=\big\{v: I=(a,b)\to \mathbb{R}\,:\, v|_{I_{1}} \in  {\mathcal U_{a+}^{\alpha, m}}(I_1),\;  v|_{I_j}\in H^{s+1}(I_{j}),\; j=2, \ldots, M\big\},
\end{equation}
equipped with the semi-norm:
\begin{equation}\label{NormL0}
		\begin{split}
 \|u\|_{\mathcal W}:=\|u\|_{{\mathcal W}^{\alpha, m, s}_{a+}(I;\mathcal{T})}:= {U_{a+}^{\alpha,m}}
 (u;{I_1})
  + \sum_{j=2 }^M
 \|u^{(s+1)}\|_{L^2(I_{j})}.
\end{split}
		\end{equation}

\begin{thm}\label{MainEstiA}
Let $u$ be the solution to \eqref{CDEq} and
$q=\sqrt{d}\,u_x,$ and let $(u_{h}^{p},q_{h}^{p})^{\T}$ be the LDG solution to
\eqref{LDG-1}–\eqref{ubsUB}.
% \begin{comment}
% the following regularity holds:
% \begin{itemize}
%  \item[(i)] For the first interval $I_{1}=(x_{0}, x_{1})$, there exists $\alpha>0$ such that
%  $u\in {\mathcal U}^{\alpha, m}_{x_{0}+}(I_{1});$
%  \item[(ii)] For the other intervals $I_{l}=(x_{l-1}, x_{l}), \ l=2,3,\ldots,M$,
% there exists an integer $s_l\ge0$ such that
%  $u\in H^{s_l+1}(I_l). $
%  \end{itemize}
% \end{comment}
%  \begin{comment}
% We define the corresponding norm
% \begin{equation*}
% 		\begin{split}
%  \VertV{u}_{A}:= {U_{x_{0}+}^{\alpha, m}}
%  (u, {I_1})
%   + \sum_{l=2 }^M
%  \|u^{( {s}_l+1)}\|_{L^2(I_{l})}.
% \end{split}
% 		\end{equation*}
%   \end{comment}
Denote and assume
\begin{equation}\label{NewNormA}
\begin{split}
 \VertV{u}_{\mathcal{E}, T}:=
2\sup _{0\le t\le T}\|u(\cdot, t)\|_{\mathcal W}
+
3\sqrt{d}\,\Big(\int_{0}^{T}
\|u(\cdot, t)\|_{\mathcal W} ^2 \, \rd t\Big)^{1/2}
+
\int_{0}^{T} \|u_t(\cdot, t)\|_{\mathcal W}\,\rd t<\infty.
\end{split}\end{equation}
%Assume that for each fixed $t\in[0,T],$ $u(\cdot,t), u_t(\cdot,t)\in {\mathcal W}^{\alpha, m, s}_{a+}(I;\mathcal{T}).$
% Assume that $\VertV{u}_{\mathcal{E}, T}<\infty$.
Then for  $h=\max\{h_j\}$ and  $p=\min\{p_j\}$ with  $p>\alpha+m-1,$  we have
\begin{itemize}
\item[(i)]    for $d=0$ and $p\ge s,$
\begin{equation}\label{MainII-a-d0}
\|(u-u_{h}^{p})(\cdot,T)\|
\le
C\Big(
\frac{h^{\alpha+1/2}}{p^{\min\{2\alpha+1,\alpha+m-1/2\}}}
+\frac{h^{s+1}}{p^{s+1}}
\Big)
\VertV{u}_{\mathcal{E},T};
\end{equation}
\item [(ii)]  for $d\neq0$ and $p\ge s-1,$
\begin{equation}\label{MainII-a-dp}
\|(u-u_{h}^{p})(\cdot,T)\|
+\|q-q_{h}^{p}\|_{Q_T}
\le
C\Big(
\frac{h^{\alpha-1/2}}{p^{\min\{2\alpha-3/2,\alpha+m-3/2\}}}
+\frac{h^{s}}{p^{s}}
\Big)
\VertV{u}_{\mathcal{E},T},
\end{equation}
\end{itemize}
% \begin{equation}\label{MainII-a}
% \begin{split}
% \|(u-u_N)(\cdot,T)\|+\|q-q_N\|_{Q_{T}} \leq C \Big(
% \frac{h^{\alpha+1/2}}{p^{\min\{2\alpha+1,\alpha+m-1/2\}}}
%   +  \frac{h^{\min \{s, p\}+1}}{p^{s+1}}\Big)\Big(1+
%   \sqrt{d}\, \frac{p^2}{h}\Big)
%  \VertV{u}_{\mathcal{E}, T}^{A},
% \end{split}
% \end{equation}
where the constant
$C$ depends on $\alpha, s,$  but is independent of $h,$ $p$ and $u$.
\end{thm}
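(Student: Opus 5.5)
The plan is to keep the stability part of \cite{Castillo2002MC} unchanged and sharpen only the projection estimates. The global bound \eqref{globalbound}--\eqref{ATcase} reduces the error to Gauss--Radau projection errors of $u$, $u_t$, $u_{\rm ic}$ and $q=\sqrt d\,u_x$. We measure each of these in $L^2$ elementwise, together with the endpoint value $(\pi^+q-q)(b^-)$. On the elements $I_j$, $j\ge2$, where $u\in H^{s+1}(I_j)$, the classical bounds of \cite[Lemmas 3.1--3.2]{Castillo2002MC} give $h^{s+1}/p^{s+1}$ for $\pi^-u-u$, and $h^{s}/p^{s}$ for $\pi^+q-q$ with $q\in H^{s}$. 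For the endpoint term the factor $d/c_{11}(b)\le h_M/(p_M)$ compensates the trace loss, so these contributions give the second term in \eqref{MainII-a-d0} and \eqref{MainII-a-dp}. The new work is therefore confined to $I_1$.

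On $I_1$ we map to $\RefDomain$ and expand $\omega=\sum_n \hat\omega_n L_n$ in Legendre polynomials. The Gauss--Radau projection has the explicit form $\pi^-_p\omega=\sum_{n<p}\hat\omega_nL_n+\big(\sum_{n\ge p}\hat\omega_n\big)L_p$, and similarly for $\pi^+_p$ with signs $(-1)^{n-p}$. This gives
\[
\|\pi^\pm_p\omega-\omega\|^2_{L^2(\RefDomain)}\le \sum_{n>p}\frac{2\hat\omega_n^2}{2n+1}+\frac{2}{2p+1}\Big|\sum_{n>p}(\pm1)^{n}\hat\omega_n\Big|^2 .
\]
The key step is a sharp decay estimate for $\hat\omega_n$ when $\omega\in\FraSpace{\alpha,m}{-1+}{\RefDomain}$. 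Starting from $\hat\omega_n=\frac{2n+1}{2}\int\omega L_n$, we integrate by parts $k$ times using the Rodrigues-type identity $L_n=\frac{1}{2n+1}(L_{n+1}-L_{n-1})'$. We then apply a fractional integration by parts of order $k-\alpha$, using the known Riemann--Liouville fractional integrals of Jacobi polynomials $(1-x)^{\cdot}P_n^{(\cdot,\cdot)}$. This brings in $\CFrD{\alpha}{-1+}\omega$, and $m$ further integer integrations by parts follow. The boundary terms are exactly $\MD^i(\CFrD{\alpha}{-1+}\omega)(-1+)$, $i<m$, which explains the norm \eqref{defn:FracSpa-3-1}. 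The resulting bound is $|\hat\omega_n|\le C n^{-\min\{\alpha+1,\,\alpha+m-1/2\}}\,\FraNorms{\alpha,m}{-1+}{\omega;\RefDomain}$, with some refinement for the endpoint-weighted terms. The hypothesis $p>\alpha+m-1$ ensures the Gamma-ratio factors are positive and controlled. The tail sums then yield $\|\pi^\pm_p\omega-\omega\|\le Cp^{-\min\{2\alpha+1,\alpha+m-1/2\}}U$. The point here is that the alternating sum $\sum(\pm1)^n\hat\omega_n$, evaluated via the same representation at $x=\pm1$, does not lose the half order a crude Cauchy--Schwarz bound would lose. Scaling back to $I_1$ gives the factor $h^{\alpha+1/2}$.

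Part (i) follows by inserting this bound with $\omega=u,u_t,u_{\rm ic}$ into \eqref{globalbound} with $d=0$, so that $q$ is absent. For part (ii), $q=\sqrt d\,u_x$ lies in $\FraSpace{\alpha-1,m}{a+}{I_1}$. Its $\pi^+$ error is of order $p^{-\min\{2\alpha-1,\alpha+m-3/2\}}h^{\alpha-1/2}$, but the trace term $(\pi^+q-q)(b^-)$ and the interplay with $c_{11}$ cost an extra $p^{1/2}$. This is consistent with the $2\alpha-3/2$ observed numerically, and we take the worse of the two contributions. I expect the main obstacle to be the exact fractional-integration-by-parts formula for the Legendre coefficients and the sharp control of the alternating tail sum at the singular endpoint: that is where the extra order over \cite{Castillo2002MC} must come from.
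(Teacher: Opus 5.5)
Your overall architecture matches the paper's. You keep the stability bound \eqref{globalbound}--\eqref{ATcase}, use classical estimates on $I_j$, $j\ge2$, and on $I_1$ combine an exact Legendre-coefficient formula (from integer and fractional integration by parts) with scaling. Your tail-sum form of $\pi^\pm_p$ is equivalent to Lemma~\ref{LPiMinus}, because $\frac{\hat\omega'_p}{2p+1}+\frac{\hat\omega'_{p+1}}{2p+3}=\sum_{n>p}\hat\omega_n$ and $\frac{\hat\omega'_p}{2p+1}-\frac{\hat\omega'_{p+1}}{2p+3}=-\sum_{n>p}(-1)^{n-p}\hat\omega_n$.

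The gap is your claim that \emph{both} projections satisfy $\|\omega-\pi^\pm_p\omega\|\le Cp^{-\min\{2\alpha+1,\alpha+m-1/2\}}U$ at a left-endpoint singularity. By \eqref{Unend-0}, the leading boundary part of $\hat\omega_n$ is $-\sin(\alpha\pi)(-1)^n\widehat C_{n,\alpha}v(-1+)$ with $\widehat C_{n,\alpha}\sim c_\alpha n^{-2\alpha-1}$. So your coefficient bound must read $n^{-\min\{2\alpha+1,\alpha+m-1/2\}}$; with $\alpha+1$ the tail sums give only $p^{-\alpha-1}$.
\begin{itemize}
\item For $\pi^-_p$, which interpolates at the regular end $+1$, the tail $\sum_{n>p}\hat\omega_n$ is alternating, hence $O(p^{-2\alpha-1})$, and nothing is lost.
\item $\pi^+_p$ interpolates at the singular end $-1$. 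There $\sum_{n>p}(-1)^{n-p}\hat\omega_n=(-1)^{p+1}\sin(\alpha\pi)v(-1+)\sum_{n>p}\widehat C_{n,\alpha}+\cdots$ has terms of one sign and is of exact order $p^{-2\alpha}$. Multiplied by $(2/(2p+1))^{1/2}$ this gives $p^{-2\alpha-1/2}$, which is sharp (take $\omega=(1+\xi)^\alpha$).
\end{itemize}
In the paper this appears as $\breve C_{p,\alpha}\sim p^{-2\alpha}$ in \eqref{EndPi+1} versus $\widetilde C_{p,\alpha}\sim p^{-2\alpha-1}$ in \eqref{EndPi-5}. It is why \eqref{EndPiII-1-1} has the exponent $\min\{2\alpha+1/2,\alpha+m-1/2\}$ for $\Pi^+_p$. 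Your alternating-sum argument only helps the projection that does \emph{not} interpolate at the singular point.

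This invalidates your part (ii). With your $\pi^+$ bound, $q=\sqrt d\,u_x\in\mathcal U^{\alpha-1,m}_{a+}(I_1)$ would give $h^{\alpha-1/2}p^{-\min\{2\alpha-1,\alpha+m-3/2\}}$ on $I_1$. You then lower this to $2\alpha-3/2$ by charging an extra $p^{1/2}$ to the trace term $(\pi^+q-q)(b^-)$, and that step is unjustified:
\begin{itemize}
\item This trace is the error of $\pi^+_{p_M}$ at the right end of $I_M$, which is a regular element when $M\ge2$.
\item As you said yourself, the weight $d/c_{11}(b)\le h_M/\max\{1,p_M\}$ makes its contribution $O(h^s/p^s)$, so it is unrelated to the singularity at $a$.
\item Only for $M=1$ does it sit on the singular element, and that still does not make your $L^2$ claim for $\pi^+$ true.
\end{itemize}
In the paper the order $2\alpha-3/2$ comes directly from the $L^2$ term $\|\pi^+q-q\|_{L^2(I_1\times(0,T))}$. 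Since $\CFrD{\alpha-1}{a+}u_x=\CFrD{\alpha}{a+}u$, one applies \eqref{EndPiII-1-1} with $\alpha$ replaced by $\alpha-1$. This gives $2(\alpha-1)+1/2=2\alpha-3/2$ and $(\alpha-1)+m-1/2=\alpha+m-3/2$, with scaling factor $h^{\alpha-1/2}$.

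To fix your proof, prove the $\pi^+$ estimate with exponent $2\alpha+1/2$ separately and drop the appeal to the trace at $b$. Part (i) is unaffected, because only $\pi^-$ enters when $d=0$.
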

% \comm{\color{red} Need to fix Remark 3.1!}
\begin{rem}\label{sing-rmk} We revisit the motivating example:
$u(x,t)=x^\alpha (c_1+c_2 x^\beta+\cdots),~
x\in  I=(a,b)=(0,1),$
where $\alpha>0$ is non-integer, $\beta> 0$, and $c_1, c_2$ are smooth functions of $t\in [0,T],$ studied  in Subsection~\ref{SecMotivat}.  One verifies from the definition  \eqref{defn:FracSpa-AB}-\eqref{defn:FracSpa-3-1} and  previous analysis that for fixed $t>0,$ the leading term $c_1  x^\alpha \in {\mathcal U}_{0+}^{\alpha,\infty}(I_1)$ and the second term $c_2  x^{\alpha+\beta} \in {\mathcal U}_{0+}^{\alpha+\beta,\infty}(I_1),$ so the corresponding orders in $p$ are  ${\mathcal O}(p^{-(2\alpha+1)})$ (for $d=0$) and  ${\mathcal O}(p^{-(2\alpha-3/2)})$ (for $d\not=0$), respectively.
Indeed, as shown in Figure~\ref{Figs_1}, the theoretical predictions and numerical errors agree well.

It is seen from   \eqref{MainII-a-d0} (resp.~\eqref{MainII-a-dp}) that if $m+\alpha-1/2\le 2\alpha+1,$ i.e., $m\le \alpha+3/2$ (resp.~$m\le \alpha$)  then
the regularity of $\CFrD{\alpha}{\hat a+} u$ dominates, and the convergence order is ${\mathcal O}(p^{-(\alpha+m-1/2)})$ for $d=0$ (resp.~${\mathcal O}(p^{-(\alpha+m-3/2)})$ for $d\not=0$).
In  Subsection~\ref{Subsec43}, we will provide some numerical illustrations of such a convergence behavior, see
Figure~\ref{Figs4-2} (right).
\qed
\end{rem}

To prove the main results stated in the above theorem,
we first make necessary preparations.
\subsection{Optimal estimates for Gauss-Radau projections}
For any $\omega\in L^1(I_{\rm ref})$ with
$I_{\rm ref}=(-1,1),$ its Legendre expansion coefficient and the corresponding $L^2$-orthogonal projection are given by
\begin{equation}\label{ancoef}
\hat {\omega}_n=\frac{2n+1}{2}\int_{-1}^{1}{\omega(\xi )L_n(\xi )}\, {\rm d} \xi , \quad
 (\Proj_p\, \omega)(\xi )= \sum_{n=0}^p \hat {\omega}_n\, L_n(\xi ),
\end{equation}
where $L_n(\xi )$ is the Legendre polynomial of degree $n.$
% We recall the $L^2$-orthogonal projection:
% \begin{equation}\label{Cbexp}
% \omega(\xi )=\sum_{n=0}^{\infty}\hat {\omega}_n\, L_n(\xi ),\quad\;\;   (\Proj_p\, \omega)(\xi )= \sum_{n=0}^p \hat {\omega}_n\, L_n(\xi ).
% \end{equation}
As in \eqref{7projections}, we define the Gauss-Radau projection on the reference interval: $\Proj_p^\pm: W^{1,1}(I_{\rm ref})\to  \mathcal{P}_p(\RefDomain)$ such that
% find $\Proj_p^\pm \omega \in \mathcal{P}_p(\RefDomain)$
% with $\Proj_p^\pm \omega(\mp 1) = \omega(\mp 1)$ such that
% find $\Proj^\pm_p \omega\in \mathcal{P}^p(\RefDomain)$ and $ \Proj^\pm_p \omega(\mp1)=\omega(\mp1)$ such that
\begin{equation}\label{LPiMinus-1}
 (\Proj^{\pm}_{p} \omega- \omega, v)_{\RefDomain}=0,\quad  \forall\, v \in \mathcal{P}_{p-1}(\RefDomain);\quad  \Proj^\pm_p \omega(\mp1)=\omega(\mp1).
% \int^1_{-1}(\Proj^\pm_p \omega-\omega)\,v\, {\rm d} \xi =0, \quad \forall v\in \mathcal{P}^{p-1}(\RefDomain).
	\end{equation}	

We shall use the following fundamental identities (see
 \cite[Proposition 3.12]{Castillo2002MC}).
\begin{lmm}\label{LPiMinus}
For any $\omega\in W^{1,1}(I_{\rm ref}),$ we have
\begin{equation}\label{LPiMinus-2}
		\begin{split}
		\|\omega-\Proj_p^{\pm}  \omega\|_{L^2(I_{\rm ref})}^2= \|\omega-\Proj_p  \omega\|_{L^2(
        I_{\rm ref})}^2+\frac{2}{2p+1}\Big(\frac{\hat \omega'_{p}}{2p+1}\mp \frac{\hat \omega'_{p+1}}{2p+3}\Big)^2,
		\end{split}
		\end{equation}	
and
\begin{equation}\label{LPiMinus-3}
		\begin{split}
		|(\omega-\Proj_p^{\pm}  \omega)({\pm}1)|=\frac{2}{2p+1}|\hat \omega'_{p}|, \quad \hat \omega'_{p}=\frac{2p+1}{2}\int_{-1}^{1}{\omega'(\xi )L_p(\xi )}\, {\rm d} \xi .
		\end{split}
		\end{equation}
% which for $\Proj^+_p$:
% \begin{equation}\label{LPiMinus-4}
% 		\begin{split}
% 		\|\omega-\Proj_p^+  \omega\|_{L^2(I_{\rm ref})}^2= \|\omega-\Proj_p  \omega\|_{L^2(
%         I_{\rm ref})}^2+\frac{2}{2p+1}\Big(\frac{\hat \omega'_{p}}{2p+1}-\frac{\hat \omega'_{p+1}}{2p+3}\Big)^2,
% 		\end{split}
% 		\end{equation}
% and
% \begin{equation}\label{LPiMinus-5}
% 		\begin{split}
% 		|(\omega-\Proj_p^+  \omega)(1)|=\frac{2}{2p+1}|\hat \omega'_{p}|.
% 		\end{split}
% 		\end{equation}
\end{lmm}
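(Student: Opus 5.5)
The plan is to write $\Proj_p^{\pm}\omega$ explicitly as a correction of the $L^2$ projection $\Proj_p\omega$ by a multiple of $L_p$, and then to read off both identities from Legendre orthogonality. Since $\Proj_p^{\pm}\omega-\omega$ is orthogonal to $\mathcal P_{p-1}$, and $\Proj_p\omega-\omega$ is orthogonal to all of $\mathcal P_p$, the difference $\Proj_p^{\pm}\omega-\Proj_p\omega\in\mathcal P_p$ is orthogonal to $\mathcal P_{p-1}$. Hence
\[
\Proj_p^{\pm}\omega=\Proj_p\omega+c^{\pm}L_p
\]
for a scalar $c^{\pm}$, fixed by the endpoint condition $\Proj_p^{\pm}\omega(\mp1)=\omega(\mp1)$. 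For $\omega\in W^{1,1}$ the Legendre series converges pointwise at $\pm1$ (via the derivative representation below), so $\omega(\mp1)-\Proj_p\omega(\mp1)=\sum_{n>p}\hat\omega_n(\mp1)^n$. Using $L_p(\mp1)=(\mp1)^p$, this gives
\[
c^{\pm}=(\mp1)^p\sum_{n\ge p+1}\hat\omega_n(\mp1)^n .
\]

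The next step expresses $c^{\pm}$ in terms of $\hat\omega'$. I will use the identity $(2n+1)L_n=L_{n+1}'-L_{n-1}'$, integrated by parts against $\omega$, which yields
\[
\hat\omega_n=\frac{\hat\omega'_{n-1}}{2n-1}-\frac{\hat\omega'_{n+1}}{2n+3}.
\]
Equivalently, I can avoid series manipulations and compute directly: $\sum_{n>p}\hat\omega_n(\mp1)^n=\frac12\int_{-1}^1\omega'(\xi)\,\big(\text{kernel}\big)\,d\xi$. Integrating by parts with $\int_{-1}^{x}L_p=(L_{p+1}-L_{p-1})/(2p+1)$ shows that $(\omega-\Proj_p\omega)(\mp1)$ plus a multiple of $L_p$'s contribution telescopes. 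In practice the cleanest route is to verify the $\Proj^-$ case directly:
\[
(\omega-\Proj_p^-\omega)(1)=\frac{2p+1}{2}\cdot\frac{2}{2p+1}\int\omega' L_p\Big/\tfrac{2p+1}{2},
\]
i.e.\ the claimed value $\frac{2}{2p+1}\hat\omega'_p$. This follows because $e=\omega-\Proj_p^-\omega$ vanishes at $-1$, so $e(1)=\int e'$. Write $\int e'=\int e'\cdot 1$ and use the decomposition of $1$ against $L_p$: integrating by parts, $\int_{-1}^1 e'(\xi)\phi(\xi)\,d\xi$ with $\phi=1$ can be traded for $\int e\,\phi'$ plus boundary terms. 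To isolate $\hat\omega'_p$, the better choice is to test with $L_p$: $\int e' L_p=e(1)-(-1)^p e(-1)-\int e L_p'=e(1)$, since $e(-1)=0$ and $L_p'\in\mathcal P_{p-1}$. Moreover $\int(\Proj_p^-\omega)'L_p=0$ because that derivative has degree $p-1$. Therefore $e(1)=\int\omega' L_p=\frac{2}{2p+1}\hat\omega'_p$, which is \eqref{LPiMinus-3}; the $+$ case is symmetric.

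For \eqref{LPiMinus-2}, orthogonality gives $\|\omega-\Proj_p^\pm\omega\|^2=\|\omega-\Proj_p\omega\|^2+(c^\pm)^2\frac{2}{2p+1}$, so it remains to identify $c^\pm$. Since $\Proj_p^\pm\omega-\Proj_p\omega=c^\pm L_p$, the coefficient $c^\pm$ is the $p$-th Legendre coefficient of $\Proj_p^\pm\omega$ minus $\hat\omega_p$. I will obtain it by testing $(\omega-\Proj_p^\pm\omega)'$ against $L_{p+1}$: integration by parts together with the endpoint value just computed, and the relation for $\hat\omega_p$ above, gives
\[
c^\pm=\mp\Big(\frac{\hat\omega'_p}{2p+1}\mp\frac{\hat\omega'_{p+1}}{2p+3}\Big)
\]
up to a sign. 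Squaring then yields the formula.

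The main obstacle is the sign and index bookkeeping in identifying $c^{\pm}$, and justifying the integrations by parts for $\omega$ only in $W^{1,1}$. The latter is handled by absolute continuity, so no pointwise series convergence is actually needed once everything is phrased through integrals against $\omega'$.
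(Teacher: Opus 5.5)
The paper does not prove this lemma; it quotes it from Castillo et al. Your argument is correct in substance and gives a self-contained proof. You decompose $\Proj_p^\pm\omega=\Proj_p\omega+c^\pm L_p$ and then test $(\omega-\Proj_p^\pm\omega)'$ against $L_p$ and $L_{p+1}$. This route suits $\omega\in W^{1,1}$ well, because it uses only absolute continuity of $\omega$. The series route you start with writes $c^\pm$ as a tail sum of the $\hat\omega_n$ and telescopes with your (correct) identity $\hat\omega_n=\frac{\hat\omega'_{n-1}}{2n-1}-\frac{\hat\omega'_{n+1}}{2n+3}$. That route would additionally need convergence of the Legendre series at $\pm1$ and $\int\omega' L_n\to0$, which you would have to prove separately.

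Two things need fixing.

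First, you have the projections swapped. By definition $\Proj_p^\pm\omega(\mp1)=\omega(\mp1)$, so the error vanishing at $-1$ is $e^+:=\omega-\Proj_p^+\omega$, not $e^-$. Your computation $\int e'L_p=e(1)$ therefore proves the $+$ case of \eqref{LPiMinus-3}, namely $e^+(1)=\frac{2}{2p+1}\hat\omega'_p$. For $e^-:=\omega-\Proj_p^-\omega$, which vanishes at $+1$, the same test gives $e^-(-1)=-(-1)^p\frac{2}{2p+1}\hat\omega'_p$. Separately, your first displayed formula for this endpoint value is off by a factor $2/(2p+1)$; the derivation that follows it is right.

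Second, the step you leave ``up to a sign'' is where the content of \eqref{LPiMinus-2} lies. What matters is which of the sum and the difference goes with which projection, and with your swapped labels it would have come out reversed. The computation is short. It uses three facts:
\begin{itemize}
\item $(\Proj_p^\pm\omega)'\in\mathcal P_{p-1}$ is orthogonal to $L_{p+1}$;
\item $L_{p+1}'-(2p+1)L_p\in\mathcal P_{p-2}$;
\item $\int_{-1}^1 e^\pm L_p\,{\rm d}\xi=-\frac{2}{2p+1}c^\pm$.
\end{itemize}
Together with the endpoint values above, these give
\[
\frac{2}{2p+3}\hat\omega'_{p+1}=\int_{-1}^1 (e^\pm)'L_{p+1}\,{\rm d}\xi=\big[e^\pm L_{p+1}\big]_{-1}^{1}+2c^\pm=\pm\frac{2}{2p+1}\hat\omega'_p+2c^\pm .
\]
Hence $c^\pm=\mp\big(\frac{\hat\omega'_p}{2p+1}\mp\frac{\hat\omega'_{p+1}}{2p+3}\big)$ exactly, with no sign ambiguity. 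Pythagoras with $\|L_p\|^2=\frac{2}{2p+1}$ then yields \eqref{LPiMinus-2}. Once this is written out, you can drop the series expression for $c^\pm$ and the remark on pointwise convergence.
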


The above lemma implies that we can estimate the $L^2$-projection and the last two expansion coefficients of  $\omega'(\xi)$ to derive the Gauss-Radau projection errors as follows.
\begin{thm}\label{EndPi}
Let $\alpha \in (k-1, k)$ and $k,m\in \mathbb N.$ Then for any
 $\omega\in {\mathcal U}^{\alpha,m}_{-1+}(\RefDomain)$ with $1/2< \alpha+m<  p+1,$ we have
% Assume the conditions in Lemma {\rm \ref{lmm:35}} hold,then for $0< \alpha+m< p+1,$
\begin{equation}\label{EndPi-1-1}
		\begin{split}
		\|\omega-\Proj_p^-  \omega\|_{L^2(\RefDomain)}\le C p^{-\min\{2\alpha+1,\alpha+m-1/2\}}\rFraSLNorm,
		\end{split}
		\end{equation}
        \begin{equation}\label{EndPiII-1-1}
		\begin{split}
		\|\omega-\Proj_p^+ \omega\|_{L^2(\RefDomain)}\le  C p^{-\min\{2\alpha+1/2,\alpha+m-1/2\}}\rFraSLNorm,
		\end{split}
		\end{equation}
and
\begin{equation}\label{EndPi-2-1}
		\begin{split}
		|\omega(\pm1)-\Proj_p^\pm  \omega(\pm1)|\le Cp^{-\min\{2\alpha,\alpha+m-1/2\}}\rFraSLNorm,
		\end{split}
		\end{equation}
where $C$ is a positive constant independent of $p$ and $\omega$.
%For $\alpha=k,$
% the estimates  \eqref{FracL2B} hold,  with the total variation $V_{\bar\Omega}[\omega^{(k)}]$ is in place of $\rFraSNorm .$
\end{thm}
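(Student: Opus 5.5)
The plan is to reduce everything to Lemma \ref{LPiMinus}. By \eqref{LPiMinus-2}--\eqref{LPiMinus-3}, it suffices to control two things: the $L^2$-projection error $\|\omega-\Proj_p\omega\|_{L^2(\RefDomain)}$, which is governed by the decay of $\hat\omega_n$ through $\|\omega-\Proj_p\omega\|^2=\sum_{n>p}\frac{2}{2n+1}\hat\omega_n^2$, and the coefficients $\hat\omega'_p,\hat\omega'_{p+1}$. The model case $\omega=(1+\xi)^\alpha$ calibrates what is needed. There $\hat\omega_n\sim(-1)^n n^{-2\alpha-1}$ and $\hat\omega'_n\sim(-1)^n n^{1-2\alpha}$, which give exactly $p^{-(2\alpha+1)}$, $p^{-2\alpha}$ and $p^{-(2\alpha+1/2)}$ in \eqref{EndPi-1-1}, \eqref{EndPi-2-1} and \eqref{EndPiII-1-1}. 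The extra power of $p$ for $\Proj_p^-$ over $\Proj_p^+$ must come from the sign: $\frac{\hat\omega'_p}{2p+1}+\frac{\hat\omega'_{p+1}}{2p+3}$ is a difference of consecutive terms of a smooth sequence because of the factor $(-1)^n$.

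\textbf{Step 1: exact coefficient formula.} For $\hat\omega'_n$ (and likewise $\hat\omega_n$), I will first integrate by parts $k-1$ times in $\int\omega' L_n$, using $\omega\in W^{k,1}$ and the Rodrigues-type identities $L_n=\frac{1}{2n+1}(L_{n+1}-L_{n-1})'$ iterated. This produces kernels $(1-\xi^2)^{j}\times$Gegenbauer/Jacobi polynomials, and boundary terms at $\pm1$ that vanish. Next, write $\omega^{(k)}={}^{RL}\!\mathcal D^{k-\alpha}_{-1+}(\CFrD{\alpha}{-1+}\omega)$ and perform a fractional integration by parts. This moves a right-sided Riemann--Liouville operator onto the kernel. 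The known Jacobi-function formulas evaluate it as an explicit weighted Jacobi function of "fractional degree", of the form $(1-\xi)^{\cdot}(1+\xi)^{\cdot}P^{(a,b)}_{n-\cdot}$, as in \cite{Liu19MC-Optimal}. Finally, integrate by parts $m$ more times on $\CFrD{\alpha}{-1+}\omega$. The result is
\begin{equation*}
\hat\omega'_n=(-1)^n\sum_{i=0}^{m-1}c_{n,i}\,\MD^i\big(\CFrD{\alpha}{-1+}\omega\big)(-1+)+\int_{-1}^1\MD^m\big(\CFrD{\alpha}{-1+}\omega\big)\,K_n\,\rd\xi .
\end{equation*}
Here the $c_{n,i}$ are explicit Gamma-function ratios, with $|c_{n,i}|\lesssim n^{1-2\alpha-2i}$ and smooth in $n$. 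The condition $\alpha+m<p+1$ ensures the kernels are nondegenerate for $n\ge p$.

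\textbf{Step 2: uniform bounds.} I will bound $\|K_n\|_{L^\infty}\lesssim n^{3/2-\alpha-m}$, using the sharp uniform bound for weighted Jacobi polynomials (Bernstein-type inequality). This gives $|\hat\omega'_n|\lesssim(n^{1-2\alpha}+n^{3/2-\alpha-m})\rFraSLNorm$, and the analogous $|\hat\omega_n|\lesssim(n^{-1-2\alpha}+n^{1/2-\alpha-m})\rFraSLNorm$. Summing over $n>p$ (using $\alpha+m>1/2$ for convergence of the tail) yields $\|\omega-\Proj_p\omega\|\lesssim p^{-\min\{2\alpha+1,\alpha+m-1/2\}}$. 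Inserting $|\hat\omega'_p|$ into \eqref{LPiMinus-3} gives \eqref{EndPi-2-1}, and into \eqref{LPiMinus-2} with the $-$ sign gives \eqref{EndPiII-1-1}.

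\textbf{Step 3: the cancellation for $\Proj_p^-$.} This is the main obstacle. With the $+$ sign in \eqref{LPiMinus-2}, the boundary part becomes $(-1)^p\sum_i\big(\frac{c_{p,i}}{2p+1}-\frac{c_{p+1,i}}{2p+3}\big)\MD^i(\CFrD{\alpha}{-1+}\omega)(-1+)$. I will show via the Gamma-ratio expansion $\Gamma(n+a)/\Gamma(n+b)$ that each difference is $O(p^{-2\alpha-2i})$, one order better than each term separately. The remainder part is estimated crudely: $p^{-1/2}\cdot p^{-1}\cdot p^{3/2-\alpha-m}=p^{-(\alpha+m)}$, which is already better than $p^{-(\alpha+m-1/2)}$. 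This gives \eqref{EndPi-1-1}. The delicate points are verifying that the $c_{n,i}$ indeed carry the exact factor $(-1)^n$ with a smooth remaining amplitude, and getting the right exponent in the uniform bound on $K_n$.
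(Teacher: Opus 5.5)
Your proposal is correct and is essentially the paper's proof: the paper also starts from the exact fractional integration-by-parts formula for $\hat\omega_n$ and $\hat\omega'_n$ (quoted from \cite[Theorem 4.1]{Liu2021ACMOptimal} rather than rederived), bounds the weighted GGF-F kernels by \cite[(4.30)]{Liu19MC-Optimal} and the Gamma-ratio boundary coefficients directly, and inserts these into Lemma~\ref{LPiMinus}; it obtains the extra order for $\Proj_p^-$ from the same $(-1)^n$ telescoping, evaluated in closed form as $\widetilde C_{p,\alpha+i}=O(p^{-2\alpha-2i-1})$, and bounds the integral part crudely, just as you do. Two small fixes: the telescoped difference is $O(p^{-2\alpha-2i-1})$, not $O(p^{-2\alpha-2i})$ as you wrote (the latter is the size of each individual term and would only give $p^{-2\alpha-1/2}$ for $\Proj_p^-$), and since $n-\alpha-m+1$ is non-integer, the uniform kernel bound must be the one for Gegenbauer functions of fractional degree rather than a polynomial Bernstein inequality.
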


\begin{proof}
The proof essentially relies on a delicate analysis of  the exact formula of the  Legendre expansion coefficient derived from integer and fractional integration by parts (see \cite[Theorem 4.1]{Liu2021ACMOptimal}):
  for $n>\alpha+m,$
\begin{equation}\label{Unend-0}
		\begin{split}
		\hat {\omega}_n & =\frac{2n+1}{2^{\alpha+m+1}\Gamma(\alpha+m+1)}\int_{-1}^{1}
        (1-\xi ^2)^{\alpha+m}\,
{}^{r\!} G_{n-\alpha-m}^{(\alpha+m+1/2)}(\xi )\, v^{(m)}(\xi ) \, {\rm d}\xi \\
&\quad -  \sin(\alpha\pi)\sum_{i=0}^{m-1} (-1)^{n+i}\,  \widehat{C}_{n,\alpha+i}\, v^{(i)} (-1+),
		\end{split}
		\end{equation}
where
\begin{equation}\label{newconstants-1}
\begin{split}
 v(\xi):=\CFrD{\alpha}{-1+}   \omega(\xi ),\quad \widehat{C}_{n,\beta}:=  2^{\beta}\Gamma(\beta+1)  \frac{2n+1}{\pi}  \frac{\Gamma(n-\beta)}{\Gamma(n+\beta+2)},
\end{split}\end{equation}
and ${}^{r\!} G_{n-\alpha-m}^{(\alpha+m+1/2)}(\xi )$ is the
generalized Gegenbauer function of fractional degree (GGF-F),  which is defined via the hypergeometric function (cf. \cite[Def.~2.1]{Liu19MC-Optimal}):
for real $\lambda>-1/2$ and real $\nu \ge 0,$
\begin{equation*}\label{rgjfdef}
{}^{r\!}G_\nu^{(\lambda)}(\xi ):=
{ }_2 F_1\Big[\begin{array}{c}
-\nu, \nu+2\lambda \\\lambda+\frac{1}{2}\end{array} ; \frac{1-\xi } 2\Big],\quad \xi\in (-1,1).
\end{equation*}
%where ${ }_2 F_1$ is the hypergeometric function.
It is clear that \eqref{Unend-0} holds provided that $v^{(m)} \in L^{1}(I_{\rm ref})$ and the summation term is finite. This naturally motivates the introduction of the refined fractional space ${\mathcal U}^{\alpha,m}_{-1+}(\RefDomain)$ in \eqref{defn:FracSpa-AB}, which enhances the framework developed in \cite{Liu19MC-Optimal,Liu2021ACMOptimal}.
Indeed, we retain the term with the factor $\sin (\alpha \pi)$ to include the integer case as a special case. Moreover, as $\alpha \rightarrow 0$, the following new bound becomes  sharper.
% Naturally, the  new fractional space  ${\mathcal U}^{\alpha,m}_{-1+}(\RefDomain)$ is obtained from \eqref{Unend-0}.
% We refine  the new bound  for $\hat {\omega}_n$ in new fractional space,
% we keep $\sin(\alpha\pi)$ to
% contain integer case, if $\alpha \to 0$, the new following bound becomes much sharper.
More precisely, using   \cite[(4.30)]{Liu19MC-Optimal} to estimate the involved GGF-F, we get
% Using Lemma \ref{LBoundForGegPoly}, we have
\begin{equation}\label{BoundForUnEnd}
\begin{split}
|\hat {\omega}_n|& \le
\bigg\{
	\frac{ (2n+1)\Gamma( ({n-\alpha-m+1})/ 2)}{2^{\alpha+m+1} \sqrt{\pi}\Gamma( ({n+\alpha+m})/2+1)}
+ |\sin(\alpha\pi)|\sum_{i=0}^{m-1}\widehat{C}_{n,\alpha+i}\bigg\}\,  {U_{-1+}^{\alpha, m}}({\omega;\RefDomain}).
	\end{split}
	\end{equation}
%    Let us deal with
%   $$ \sum_{i=0}^{m-1}\widehat{C}_{n,\alpha+i}$$
In view of \eqref{newconstants-1}, we deduce from direct calculation that
\begin{equation}\label{BoundForUnEnd-1}
\begin{split}
	\sum_{i=0}^{m-1}\frac{\widehat  C_{n,\alpha+i}}{\widehat  C_{n,\alpha}}&=1+ 	\sum_{i=1}^{m-1}
    \bigg( \prod_{r=1}^i\frac{2(\alpha+r)}{(n-\alpha-r)(n+\alpha+r+1)}\bigg)\\
    &
    \le  \sum_{i=0}^{m-1}\Big(\frac{2(\alpha+m-1)}{n+\alpha+m}\Big)^i <  \sum_{i=0}^{\infty}\Big(\frac{2(\alpha+m-1)}{n+\alpha+m}\Big)^i=\frac{n+\alpha+m}{n-\alpha-m+2},
    \end{split}
	\end{equation}
    where we used the fact:  for  $1\le r\le i\le m-1$ and $ n > \alpha+m, $
	\begin{equation*}
(n-\alpha-r)(n+\alpha+r+1)\ge (n-\alpha-m+1)(n+\alpha+m)\ge n +\alpha + m.
	\end{equation*}
Consequently, we obtain from \eqref{BoundForUnEnd} and \eqref{BoundForUnEnd-1} that
\begin{equation}\label{BoundUnEnd-ReNew}
	\begin{split}
|\hat {\omega}_n|& \le
\bigg\{
	\frac{ (2n+1)\Gamma( ({n-\alpha-m+1})/ 2)}{2^{\alpha+m+1} \sqrt{\pi}\Gamma( ({n+\alpha+m})/2+1)} + |\sin(\alpha\pi)| \frac{n+\alpha+m}{n-\alpha-m+2}\widehat{C}_{n,\alpha}\bigg\}\,   \rFraSLNorm.
	\end{split}
	\end{equation}
Thus, using an argument similar to \cite[(E.16)]{Liu2021ACMOptimal}, we can obtain the  refined $L^2$-bound:
\begin{equation}\label{FracEndL2B}
		\begin{split}
		\|\omega&-\Proj_p \omega\|_{L^2(\RefDomain)}\le\bigg\{ \frac{4}{(2\alpha+2m-1) \pi}
		\frac{ \Gamma( {p-\alpha-m+2})}{\Gamma( {p+\alpha+m+1})}+ \frac{(2p+3)\,\Gamma^2( ({p-\alpha-m})/ 2+1)}{ 2^{2(\alpha+m)}{\pi}\,\Gamma^2( ({p+\alpha+m+3})/2) }\\
& + |\sin(\alpha\pi)|\frac{2^{2(3\alpha+2)}\Gamma^2(\alpha+1)}{\pi^2(2\alpha+1)} \frac{(p+\alpha+m+1)^2}{({p-\alpha-m+3})^2}
\frac{p+3/2}{p+1}
\frac{\Gamma(2p-2\alpha+1)}{\Gamma(2p+2\alpha+3)}
\bigg\}^{1/2}\, {U_{-1+}^{\alpha, m}}({\omega;\RefDomain}).
		\end{split}
		\end{equation}

 We next estimate the expansion coefficients of $\omega'(\xi)$ in  Lemma \ref{LPiMinus}. From \eqref{Unend-0}, we have
\begin{equation}\label{dUnend-0}
		\begin{split}
		\hat {\omega}'_n =&\frac{2n+1}{2^{\alpha+m}\Gamma(\alpha+m)}\int_{-1}^{1}  (1-\xi ^2)^{\alpha+m-1}\,
{}^{r\!} G_{n-\alpha-m+1}^{(\alpha+m-1/2)}(\xi ) \, v^{(m)}(\xi ) \, {\rm d}\xi \\
        &+  \sin(\alpha\pi)\sum_{i=0}^{m-1} (-1)^{n+i}  \widehat{C}_{n,\alpha+i-1}\, v^{(i)} (-1+).
%\big\{{\mathcal D}^j\big( {}_{-1}I_{\xi }^{1-s}\omega^{(m)}  (\xi )\big) \big\}
		\end{split}
		\end{equation}
Using \eqref{dUnend-0}, taking the maximum value of
$ (1-\xi ^2)^{\alpha+m-1}\,
{}^{r\!} G_{n-\alpha-m+1}^{(\alpha+m-1/2)}(\xi ),$
and
proceeding similarly to the proof of \eqref{BoundUnEnd-ReNew}, we obtain
\begin{equation}\label{EndPi-3}
\begin{split}
|\hat \omega'_n| &\le \bigg\{\frac{1}{2^{\alpha+m} \sqrt{\pi}}
	\frac{ (2n+1)\Gamma( ({n-\alpha-m})/ 2+1)}{\Gamma( ({n+\alpha+m+1})/2)}\\
    &
    \quad
+ |\sin(\alpha\pi)| \frac{n+\alpha+m-1}{n-\alpha-m+3}\widehat{C}_{n,\alpha-1}\bigg\}\,   \rFraSLNorm .
\end{split}
\end{equation}
Taking $n=p$ in \eqref{EndPi-3} and using  \eqref{LPiMinus-3}, we obtain \begin{equation}\label{EndPi-2}
		\begin{split}
		|\omega(-1)-\Proj_p^-  \omega(-1)|\le \widehat{\Phi}^{(\alpha,m)}_p\,\rFraSLNorm ,
		\end{split}
		\end{equation}
        where we denote
$$\widehat{\Phi}^{(\alpha,m)}_p:= \frac{1}{2^{\alpha+m-1} \sqrt{\pi}}
	\frac{ \Gamma( ({p-\alpha-m})/ 2+1)}{\Gamma( ({p+\alpha+m+1})/2)}		+|\sin(\alpha\pi)|\frac{2^{\alpha}\Gamma(\alpha) }{\pi}
        \frac{p+\alpha+m-1}{p-\alpha-m+3}
\frac{\Gamma(p-\alpha+1)}{\Gamma(p+\alpha+1)}.  $$
% The above new bound also includes the integer cases, either as $\alpha \to 0$ or as $p\to \alpha+m+1.$
By  \eqref{dUnend-0},
% \begin{equation}\label{EndPi-4}
%  \begin{split}
% \frac{\hat \omega'_{p}}{2p+1} & =
%  \frac{1}{2^{\alpha+m} \Gamma(\alpha+m)}
% \int_{-1}^{1}
%   {}^r \mathcal{G}_{p}^{(\alpha+m-1)}(\xi )\, v^{(m)}(\xi ) \, {\rm d}\xi
% \\& \quad
% +\sin(\alpha\pi)\sum_{i=0}^{m-1}(-1)^{p+i}
%  \frac{2^{\alpha+i-1}\Gamma(\alpha+i)}{\pi}
%    \frac{\Gamma(p-\alpha-i+1)}{\Gamma(p+\alpha+i+1)}
% v^{(j)}(-1+).
%  \end{split}
% 	\end{equation}
% Thus
 \begin{equation}\label{EndPi-5}
 \begin{split}
\frac{\hat \omega'_{p}}{2p+1}+\frac{\hat \omega'_{p+1}}{2p+3} & =
 \frac{1}{2^{\alpha+m} \Gamma(\alpha+m)}
\int_{-1}^{1}
(1-\xi ^2)^{\alpha+m-1}
  \big({}^{r\!} G_{p-\alpha-m+1}^{(\alpha+m-1/2)}(\xi ) \\
  &+ {}^{r\!} G_{p-\alpha-m+2}^{(\alpha+m-1/2)}(\xi ) \big)\,v^{(m)}(\xi ) \, {\rm d}\xi
+\sin(\alpha\pi)\sum_{i=0}^{m-1}(-1)^{p+i}
\widetilde  C_{p,\alpha+i}
v^{(i)}(-1+),
 \end{split}
	\end{equation}
    where
    $$
\widetilde  C_{p,\beta} :
=\frac{2^{\beta }\Gamma(\beta+1)}{\pi}\frac{\Gamma( p-\beta+1)}{\Gamma(p+\beta+2)}.
$$
%where
%$$\widetilde  C_{p,\beta} := \frac{2^{\beta-2}\Gamma(\beta-1)}{\pi} \Big(\frac{\Gamma(p-\beta+2)}{\Gamma(p+\beta)}-   \frac{\Gamma(p-\beta+3)}{\Gamma(p+\beta+1)}\Big)
%=\frac{2^{\beta-1}\Gamma(\beta)}{\pi}\frac{\Gamma( p-\beta+2)}{\Gamma(p+\beta+1)}.
%$$
Similarly,  using \cite[(4.30)]{Liu19MC-Optimal} yields
%$$|{}^r \mathcal{G}_{n}^{(\beta)}(\xi )|\le \frac{\Gamma(\beta+1)\Gamma((n-\beta+1)/2)}{\sqrt \pi \Gamma((n+\beta)/2+1)}$$
\begin{equation}\label{EndPi-6}	
  \frac{(1-\xi ^2)^{\alpha+m-1}}{2^{\alpha+m} \Gamma(\alpha+m)}\big|
{}^{r\!} G_{p-\alpha-m+1}^{(\alpha+m-1/2)}(\xi )
  + {}^{r\!} G_{p-\alpha-m+2}^{(\alpha+m-1/2)}(\xi ) \big| \le
\frac{1}{2^{\alpha+m-1} \sqrt{\pi}}
	\frac{ \Gamma( ({p-\alpha-m})/ 2+1)}{\Gamma( ({p+\alpha+m+1})/2)}.
\end{equation}
Similar to the derivation of
\eqref{BoundForUnEnd-1}, we can obtain
\begin{equation}\label{EndPi-7}
	\sum_{i=0}^{m-1}
\frac{\widetilde  C_{p,\alpha+i}}
{\widetilde  C_{p,\alpha}}
=1+	\sum_{i=1}^{m-1}
    \bigg( \prod_{r=1}^i\frac{2(\alpha+r)}{(p-\alpha-r+1)(p+\alpha+r+1)}\bigg)
<\frac{p+\alpha+m}{p+1}.
	\end{equation}
% From straightforward calculations, we find
% \begin{equation}\label{EndPi-7}
% 	\begin{split}
% 	\sum_{j=0}^{m}
% \frac{\widetilde  C_{p,\alpha+j}}{\widetilde  C_{p,\alpha}}&=
% \sum_{j=0}^{m}2^j
% \frac{\Gamma(\alpha+j)}{\Gamma(\alpha)}
% \frac{\Gamma(p-\alpha-j+2)}{\Gamma(p-\alpha+2)}
% \frac{\Gamma(p+\alpha+1)}{\Gamma(p+\alpha+j+1)}
% \\&
% =1+\sum_{j=1}^{m}\bigg( \prod_{i=1}^j
% \frac{2(\mu+i-1)}{(p-\alpha-i+2)(p+\alpha+i)}\bigg)
% \\&
% \le 1+\sum_{j=1}^{m}
% \Big(\frac{\alpha+m-1}{p+1/2}\Big)^j
% <  \sum_{j=0}^{\infty}\Big(\frac{\alpha+m-1}{p+1/2}\Big)^j
% \\&=\frac{p+1/2}{p-\alpha-m+3/2},
% 	\end{split}
% 	\end{equation}
% where we used the fact:  for  $1\le i\le j\le m$ and $  \alpha+m<p+1, $
% 	\begin{equation*}
% (p-\alpha-i+2)(p+\alpha+i)\ge (p-\alpha-m+2)(p+\alpha+m)\ge 2p+1.
% 	\end{equation*}
Thus, by \eqref{EndPi-5}-\eqref{EndPi-7},
\begin{equation}\label{EndPi-8}
		\begin{split}	
\Big(\frac{\hat \omega'_{p}}{2p+1}+\frac{\hat \omega'_{p+1}}{2p+3}\Big)^2 & \le \bigg\{\frac{1}{2^{\alpha+m-1} \sqrt{\pi}}
	\frac{ \Gamma( ({p-\alpha-m})/ 2+1)}{\Gamma( ({p+\alpha+m+1})/2)}
		\\
&\quad +|\sin(\alpha\pi)|\frac{p+\alpha+m}{p+1}\widetilde  C_{p,\alpha}
		\bigg\}^2\big(\rFraSLNorm\big)^2.
		\end{split}
		\end{equation}
From \eqref{LPiMinus-2}, \eqref{FracEndL2B} and \eqref{EndPi-8}, we have
\begin{equation}\label{EndPi-1}
		\begin{split}
		\|\omega-\Proj_p^-  \omega\|_{L^2(\RefDomain)}\le&\widehat{\Upsilon}^{(\alpha,m)}_p\, \rFraSLNorm ,
		\end{split}
		\end{equation}	
where we denote
%\begin{equation*}
%\begin{split}
%\widehat{\Upsilon}^{(\alpha,m)}_p:=&\bigg\{ \frac{4}{(2\alpha+2m+1) \pi}
%		\frac{ \Gamma( {p-\alpha-m+1})}{\Gamma( {p+\alpha+m+2})}+ \frac{(p+1/2)\,\Gamma^2( ({p-\alpha-m+1})/ 2)}{ 2^{2\alpha+2m+1}{\pi}\,\Gamma^2( ({p+\alpha+m})/2+2) }\\
%&\qquad + \frac{2^{6\alpha+5}\Gamma^2(\alpha+1)}{\pi^2(2\alpha+1)} \frac{(p+1)^2\,\Gamma(2p-2\alpha+1)}{({p-\alpha-m+1})^2\,\Gamma(2p+2\alpha+3)}\\
%&\qquad +\frac{2}{2p+1}\bigg\{\frac{1}{2^{\alpha+m} \sqrt{\pi}}
%	\frac{ \Gamma( ({p-\alpha-m+1})/ 2)}{\Gamma( ({p+\alpha+m+2})/2)}
%		+\frac{2^{\alpha}\Gamma(\alpha) p\Gamma(p-\alpha+1)}{\pi\,(p-\alpha-m+1) \Gamma(p+\alpha+1)}
%		\bigg\}^2
%\bigg\}^{1/2},
%\end{split}
%	\end{equation*}
\begin{equation*}
\begin{split}
\widehat{\Upsilon}^{(\alpha,m)}_p&:=\bigg\{ \frac{4}{(2\alpha+2m-1) \pi}
		\frac{ \Gamma( {p-\alpha-m+2})}{\Gamma( {p+\alpha+m+1})}+ \frac{(2p+3)\,\Gamma^2( ({p-\alpha-m})/ 2+1)}{ 2^{2(\alpha+m)}{\pi}\,\Gamma^2( ({p+\alpha+m+3})/2) }\\
&\quad + |\sin(\alpha\pi)|\frac{2^{2(3\alpha+2)}\Gamma^2(\alpha+1)}{\pi^2(2\alpha+1)} \frac{(p+\alpha+m+1)^2}{({p-\alpha-m+3})^2}
\frac{p+3/2}{p+1}
\frac{\Gamma(2p-2\alpha+1)}{\Gamma(2p+2\alpha+3)}\\
&\quad +\frac{2}{2p+1}\bigg\{\frac{1}{2^{\alpha+m-1} \sqrt{\pi}}
	\frac{ \Gamma( ({p-\alpha-m})/ 2+1)}{\Gamma( ({p+\alpha+m+1})/2)}		
+|\sin(\alpha\pi)|\frac{p+\alpha+m}{p+1}\widetilde  C_{p,\alpha}
		\bigg\}^2
\bigg\}^{1/2}.
\end{split}
	\end{equation*}

For $\Proj_p^+$, using \eqref{LPiMinus-2} with the opposite sign leads us to
estimate the new combination, which can be expressed using \eqref{dUnend-0},
 \begin{equation}\label{EndPi+1}
 \begin{split}
\frac{\hat \omega'_{p}}{2p+1}-\frac{\hat \omega'_{p+1}}{2p+3} & =
 \frac{1}{2^{\alpha+m} \Gamma(\alpha+m)}
\int_{-1}^{1}
(1-\xi ^2)^{\alpha+m-1}
  \big({}^{r\!} G_{p-\alpha-m+1}^{(\alpha+m-1/2)}(\xi ) \\
  &- {}^{r\!} G_{p-\alpha-m+2}^{(\alpha+m-1/2)}(\xi ) \big)\,v^{(m)}(\xi ) \, {\rm d}\xi
+\sin(\alpha\pi)\sum_{i=0}^{m-1}(-1)^{p+i}
\breve{C}_{p,\alpha+i}
v^{(i)}(-1+),
 \end{split}
\end{equation}
 where
$$
\breve{C}_{p,\gamma} :
=\frac{2^{\gamma }(p+1)\Gamma(\gamma)}{\pi}\frac{\Gamma( p-\gamma+1)}{\Gamma(p+\gamma+2)}.
$$
Similarly,  using \cite[(4.30)]{Liu19MC-Optimal} yields
\begin{equation}\label{EndPi+2}	
  \frac{(1-\xi ^2)^{\alpha+m-1}}{2^{\alpha+m} \Gamma(\alpha+m)}\big|
{}^{r\!} G_{p-\alpha-m+1}^{(\alpha+m-1/2)}(\xi )
  - {}^{r\!} G_{p-\alpha-m+2}^{(\alpha+m-1/2)}(\xi ) \big| \le
\frac{1}{2^{\alpha+m-1} \sqrt{\pi}}
	\frac{ \Gamma( ({p-\alpha-m})/ 2+1)}{\Gamma( ({p+\alpha+m+1})/2)}.
\end{equation}
Similar to the derivation of
\eqref{BoundForUnEnd-1}, we can obtain
\begin{equation}\label{EndPi+3}
	\sum_{i=0}^{m-1}
\frac{\breve{C}_{p,\alpha+i} }
{\breve{C}_{p,\alpha} }
=1+	\sum_{i=1}^{m-1}
    \bigg( \prod_{r=1}^i\frac{2(\alpha+r-1)}{(p-\alpha-r+1)(p+\alpha+r+1)}\bigg)
<\frac{p+\alpha+m}{p+2}.
	\end{equation}
Thus, by \eqref{EndPi+1}-\eqref{EndPi+3},
\begin{equation}\label{EndPi+4}
		\begin{split}	
\Big(\frac{\hat \omega'_{p}}{2p+1}-\frac{\hat \omega'_{p+1}}{2p+3}\Big)^2 & \le \bigg\{\frac{1}{2^{\alpha+m-1} \sqrt{\pi}}
	\frac{ \Gamma( ({p-\alpha-m})/ 2+1)}{\Gamma( ({p+\alpha+m+1})/2)}
		\\
&\quad +|\sin(\alpha\pi)|\frac{p+\alpha+m}{p+2}\breve{C}_{p,\alpha}
		\bigg\}^2\big(\rFraSLNorm\big)^2.
		\end{split}
		\end{equation}
From \eqref{LPiMinus-2}, \eqref{FracEndL2B} and \eqref{EndPi+4}, we have
\begin{equation}\label{EndPi+5}
		\begin{split}
		\|\omega-\Proj_p^+  \omega\|_{L^2(\RefDomain)}\le&{\Upsilon}^{(\alpha,m)}_p\, \rFraSLNorm ,
		\end{split}
		\end{equation}	
where we denote
\begin{equation*}
\begin{split}
{\Upsilon}^{(\alpha,m)}_p&:=\bigg\{ \frac{4}{(2\alpha+2m-1) \pi}
		\frac{ \Gamma( {p-\alpha-m+2})}{\Gamma( {p+\alpha+m+1})}+ \frac{(2p+3)\,\Gamma^2( ({p-\alpha-m})/ 2+1)}{ 2^{2(\alpha+m)}{\pi}\,\Gamma^2( ({p+\alpha+m+3})/2) }\\
&\quad + |\sin(\alpha\pi)|\frac{2^{2(3\alpha+2)}\Gamma^2(\alpha+1)}{\pi^2(2\alpha+1)} \frac{(p+\alpha+m+1)^2}{({p-\alpha-m+3})^2}
\frac{p+3/2}{p+1}
\frac{\Gamma(2p-2\alpha+1)}{\Gamma(2p+2\alpha+3)}\\
&\quad +\frac{2}{2p+1}\bigg\{\frac{1}{2^{\alpha+m-1} \sqrt{\pi}}
	\frac{ \Gamma( ({p-\alpha-m})/ 2+1)}{\Gamma( ({p+\alpha+m+1})/2)}	
+|\sin(\alpha\pi)|\frac{p+\alpha+m}{p+2} \breve{C}_{p,\alpha}
		\bigg\}^2
\bigg\}^{1/2}.
\end{split}
\end{equation*}

In this above estimates, we tried to keep the estimates valid uniformly for small $p,$ so the bounds appear a bit involved.
To explicitly show the order in large $p,$
we use the formula (see e.g.,  \cite[(5.11.13)]{Olver2010Book}\rm): for $a<b$,
\begin{equation}\label{gamratioA}
\frac{\Gamma(z+a)}{\Gamma(z+b)}=z^{a-b}+\frac{1}{2}(a-b)(a+b-1)z^{a-b-1}+O(z^{a-b-2}),\quad z\gg 1,
\end{equation}	
and can obtain the desired estimates.
\end{proof}

% \begin{rem}
% Note that for endpoint singularities, $\alpha$ must not be an integer.
%  For functions with a singularity at the right endpoint, that is, $\omega\in {\mathcal U}^{\alpha,m}_{1-}(\RefDomain),$ Lemma \ref{EndPi} holds under the corresponding conditions, with $\rFraSRNorm$  in place of $\rFraSLNorm.$
% \end{rem}

\begin{rem}\label{rmk31A} It is seen from the above proof that the exact formula for the Legendre expansion coefficient in \eqref{Unend-0} is the starting point and critical for the analysis. In fact, if $\alpha\to 0,$ then it reduces to
\begin{equation}\label{Unend-a}
\hat{\omega}_n
= \frac{2n+1}{2^{m+1}m!}
\int_{-1}^{1} (1-\xi^2)^{m}\,
G_{n-m}^{(m+1/2)}(\xi)\,\omega^{(m)}(\xi)\,{\rm d}\xi,
\end{equation}
where in contrast to the fractional setting, the Gegenbauer polynomials
$\{G_{n-m}^{(m+1/2)}(\xi)\}$ are mutually orthogonal in $L^2_{\rho^m}(I_{\rm ref})$ with $\rho=1-\xi^2.$
Thus, if $\omega\in L^2_{\rho^m}(I_{\rm ref}),$  we can use the  Parseval's identity,  and \eqref{Unend-a} to derive
\begin{equation}\label{Unend-b0}
\int_{-1}^1 |\omega^{(m)}(\xi)|^2 (1-\xi^2)^m \,{\rm d}\xi
=\sum_{n=m}^{\infty}\frac{2(n+m)!}{(2n+1)(n-m)!}\,|\hat{\omega}_n|^2.
\end{equation}
Then we can estimate the $L^2$-projection error:
\begin{equation}\label{Unend-b1}
\begin{split}
\|\omega-\Proj_p \omega\|_{L^2(\RefDomain)}^{2}
&=
\sum_{n=p+1}^{\infty}\frac{2}{2n+1}\,|\hat{\omega}_n|^2
\leq
\max_{n\geq p+1}\Big\{\frac{(n-m)!}{(n+m)!}\Big\}
\sum_{n=p+1}^{\infty} \frac{2(n+m)!}{(2n+1)(n-m)!}\,|\hat{\omega}_n|^2
\\
&\leq
\frac{(p-m+1)!}{(p+m+1)!}
\int_{-1}^1 |\omega^{(m)}(\xi)|^2 (1-\xi^2)^m \,{\rm d}\xi\le C p^{-2m}
\|\omega^{(m)}\|^2_{L^2_{\rho^m}(\RefDomain)},
\end{split}
\end{equation}
to obtain analogous estimates as in \cite{Schwab1998Book,Shen2011Book}, albeit via a slightly different approach. We can then further estimate the Gauss–Radau projections using Lemma~\ref{LPiMinus}.
\qed
\end{rem}

\subsection{Proof of Theorem \ref{MainEstiA} }
\label{pfMainEstiA}
Using the new approximation results, we  estimate the four terms in
 \eqref{globalbound}-\eqref{ATcase}. Note that the Gauss-Radau projections
 $\pi^{ \pm} u|_{I_j}=\pi_{p_1}^{\pm} u|_{I_j}$ are assembled piece by piece,
 so we can estimate the four terms
 \eqref{globalbound}-\eqref{ATcase} separately for each $I_j.$ Here,
we focus on the first sub-interval
 $I_1=(a,x_1)$ with solution singularity, as the others sub-intervals with the given  regularity assumption  can be estimated as in  \cite{Castillo2002MC}.
 % For clarity, we split the proof into  four steps.
% % We start from the global bound \eqref{globalbound},
% % the desired estimates \eqref{MainII-a-d0} and \eqref{MainII-a-dp} will follow once we bound the four
% % projection error terms in \eqref{globalbound} by
% % $\VertV{u}_{\mathcal{E}, T}^{A}$.
% \medskip
% \noindent
% \emph{Step 1. Estimates on the first interval $I_1=(a,x_1)$.}
The linear transforms between $I_1$ and $I_{\rm ref}=(-1,1)$ are
\begin{equation}\label{map0}
x=\chi(\xi;I_1):=x_0+\frac{h_1}{2}(1+\xi);\;\;  \xi\in\RefDomain, \quad
\xi=\chi^{-1}(x;I_1)=2\frac{x-x_0}{h_1}-1,\;\;  x\in I_1.
\end{equation}
Set $\omega(\xi,t):=u(\chi(\xi;I_1),t)=u|_{I_1}\circ \chi,$ and   $\Proj_{p_1}^\pm\omega = (\GProj_{p_1}^\pm u|_{I_1})\circ\chi.$
% If $u(\cdot,t)|_{I_1}\in {\mathcal U}^{\alpha,m}_{a+}(I_1),$ we have
%  $\omega(\cdot,t)\in {\mathcal U}^{\alpha,m}_{-1+}(\RefDomain)$ and vice versa.
 We obtain from \eqref{EndPi-1-1} that
\begin{equation}\label{Cor:33-3a0}
\begin{split}
\|(u-\GProj_{p_1}^- u)(\cdot,t)\|_{I_1}&=\Big(\frac{h_1} 2\Big)^{1/2}\|\omega-\Proj_{p_1}^- \omega\|_{L^2(\RefDomain)}\\
&\le C
{h}_1 ^{1/2}   p_1^{-\min\{2\alpha+1,\alpha+m-1/2\}}\rFraSLNorm,
\end{split}\end{equation}
where by \eqref{defn:FracSpa-3-1},
$$\rFraSLNorm=\big\|\MD^{m}\big(\CFrD{\alpha}{-1+}   \omega\big)\big\|_{{L^1}(\RefDomain)} + \sum_{i=0}^{m-1}\big|\MD^i \big(\CFrD{\alpha}{-1+}   \omega\big) (-1+)\big|.$$
%Since $u(\cdot,t)\in{\mathcal U}^{\alpha,m}_{a+}(I_1)$ with $\alpha\in(k-1,k)$, we have
%$u\in W^{k,1}(I_1)$ and $\CFrD{\alpha}{a+}u\in W^{m,1}(I_1)$ by definition
%of ${\mathcal U}^{\alpha,m}_{a+}(I_1)$.
%Let us to estimates $\rFraSLNorm.$
Making a change of variables and using the definition of Caputo derivative, we find
\begin{equation}\label{Cor:33-6}
\begin{split}
\CFrD{\alpha}{-1+} \omega(\xi) & =
\Big({\mathcal I}_{-1+}^{k-\alpha} \frac{\rd ^{k}\omega(\xi)}{\rd\xi^{k}}\Big)
=\frac {1} {\Gamma(k-\alpha)}\int_{-1}^{\xi} (\xi-\tau)^{k-\alpha-1}\frac{\rd ^{k}\omega(\tau)}{\rd\tau^{k}}{\rm d}\tau\\
&=\Big(\frac{h_1}{2}\Big)^{\alpha}\frac{1} {\Gamma(k-\alpha)}\int_{x_0}^{x} (x-y)^{k-\alpha-1}\frac{\rd ^{k}u(y)}{\rd y^{k}}{\rm d}y\\
&
=\Big(\frac{h_1}{2}\Big)^{\alpha}\CFrD{\alpha}{a+}  u(x),
\end{split}
    \end{equation}
where
% $x = \chi(\xi; I_1),$ $ y = \chi(\tau; I_1)$
% and
we used
$$\rd\tau = \frac{2}{h_1}\rd y,\quad \frac{\rd ^{k}\omega(\tau)}{\rd\tau^{k}}=\Big(\frac{h_1}{2}\Big)^{k}\frac{\rd ^{k}u(y)}{\rd y^{k}},\quad (\xi-\tau)^{k-\alpha-1}= \Big(\frac{h_1}{2}\Big)^{1+\alpha-k} (x-y)^{k-\alpha-1}.  $$
We further take integer derivatives and find readily that for $1\le i\le m$,
\begin{equation}\label{Cor:33-7}
\begin{split}
\partial_\xi^i\CFrD{\alpha}{-1+} \omega(\xi) =\Big(\frac{h_1}{2}\Big)^{\alpha} \partial_\xi^i\CFrD{\alpha}{a+}  u(x)
=\Big(\frac{h_1}{2}\Big)^{\alpha+i}\partial_x^i\CFrD{\alpha}{a+}  u(x),\quad x\in I_1.
\end{split}
    \end{equation}
%Hence $\omega(\cdot,t)\in{\mathcal U}^{\alpha,m}_{-1+}(\RefDomain)$ and
Thus,  we have that for $0<h_1<2,$
\begin{equation*}
\begin{split}
U_{-1+}^{\alpha,m}(\omega;\RefDomain)
&=\big\|\MD^{m}\big(\CFrD{\alpha}{-1+}   \omega\big)\big\|_{{L^1}(\RefDomain)} + \sum_{i=0}^{m-1}\big|\MD^i \big(\CFrD{\alpha}{-1+}   \omega\big) (-1+)\big|
\\
&=\Big(\frac{h_1}{2}\Big)^{\alpha+m}\big\|\MD^{m}\big(\CFrD{\alpha}{a+}   u\big)\big\|_{{L^1}(I_1)} + \sum_{i=0}^{m-1}\Big(\frac{h_1}{2}\Big)^{\alpha+i}\big|\MD^i \big(\CFrD{\alpha}{a+}   u\big) (a+)\big|
\\
&=\Big(\frac{h_1}{2}\Big)^{\alpha}\Big\{ \Big(\frac{h_1}{2}\Big)^{m}\big\|\MD^{m}\big(\CFrD{\alpha}{a+}   u\big)\big\|_{{L^1}(I_1)} + \sum_{i=0}^{m-1}\Big(\frac{h_1}{2}\Big)^{i}\big|\MD^i \big(\CFrD{\alpha}{a+}   u\big) (a+)\big|\Big\}
\\
&\le \Big(\frac{h_1}{2}\Big)^{\alpha}U_{a+}^{\alpha,m}(u(\cdot,t);I_1).
\end{split}
    \end{equation*}
Thus, the estimate \eqref{Cor:33-3a0} becomes
\begin{equation}\label{Th45-1}
\begin{split}
\|(u-\GProj_{p_1}^- u)(\cdot,t)\|_{I_1}\le C
h_1 ^{\alpha+1/2}   p_1^{-\min\{2\alpha+1,\alpha+m-1/2\}}U_{a+}^{\alpha,m}(u(\cdot,t);I_1).
\end{split}\end{equation}
\begin{comment}
Define $\GProj_{p_1}^\pm u$ on $I_1$ by $(\GProj_{p_1}^\pm u)(x,t):=(\Proj_{p_1}^\pm \omega)(\chi^{-1}(x;I_1),t),x\in I_1 .$
Equivalently, $\Proj_{p_1}^\pm\omega = (\GProj_{p_1}^\pm u)\circ\chi$ on $\RefDomain$.
Using $\rd x=(h_1/2)\rd\xi$, we have
\begin{equation}\label{Cor:33-3a}
\|\omega-\Proj_{p_1}^\pm \omega\|_{L^2(\RefDomain)}
=\Big(\frac{2}{h_1}\Big)^{1/2}
\|(u-\GProj_{p_1}^\pm u)(\cdot,t)\|_{I_1}.
\end{equation}
Combining \eqref{EndPi-1-1} with $p=p_1$, together with \eqref{Cor:33-3a} and the norm scaling, yields
\begin{equation}\label{Th45-1}
\|\GProj^{-}_{p_1} u(\cdot,t)-u(\cdot,t)\|_{I_1}
\le C\,h_1^{\alpha+1/2}
p_1^{-\min\{2\alpha+1,\alpha+m-1/2\}}
U^{\alpha,m}_{a+}(u(\cdot,t);I_1).
\end{equation}
\end{comment}
Similarly, we can show
\begin{equation}\label{Th45-2}
\big\|(\GProj^{-}_{p_1} u_t-u_t)(\cdot,t)\big\|_{I_{1}} \leq  C h_1^{\alpha+1/2}
 p_1^{-\min\{2\alpha+1,\alpha+m-1/2\}}
 \,U^{\alpha, m}_{a+}({u_t(\cdot,t);I_1})  .
\end{equation}

(i) When $d=0$, all terms involving $q$ in \eqref{globalbound} are absent.
Thus, using \eqref{Th45-1} and \eqref{Th45-2} yields
\begin{equation}\label{err-d0-1}
\begin{aligned}
&\sup_{0\le t\le T}\|(\GProj^{-}u-u)(\cdot,t)\|_{I_1}
+\int_{0}^{T}\|(\GProj^{-}u_t-u_t)(\cdot,t)\|_{I_1}\,\rd t
+\|\GProj^{-}u_{\rm ic}-u_{\rm ic}\|_{I_1}
\\
&\qquad\le
C
\frac{h_1^{\alpha+1/2}}{p_1^{\min\{2\alpha+1,\alpha+m-1/2\}}}
\Big(\sup_{0\le t\le T}U^{\alpha,m}_{a+}(u(\cdot,t);I_1)
+\int_{0}^{T}U^{\alpha,m}_{a+}(u(\cdot,t);I_1)\,\rd t\Big),
\end{aligned}
\end{equation}
which leads to the contribution of the errors in the first interval for the desired estimate in \eqref{MainII-a-d0}.

\medskip

{(ii) When $d\neq0$,}
the diffusion is present, so we have to use the approximation result on $\pi^+ q|_{I_1}$ with $q=\sqrt{d}\,u_x.$
Since $u\in W^{k,1}(I_1)$, we have $u_x\in W^{k-1,1}(I_1)$, and
$$
\CFrD{\alpha-1}{a+}(u_x)
   ={\mathcal I}_{a+}^{(k-1)-(\alpha-1)}(u_x)^{(k-1)}
   ={\mathcal I}_{a+}^{k-\alpha}u^{(k)}
   =\CFrD{\alpha}{a+}u\in W^{m,1}(I_1).
$$
% Consequently,
% $$
% \MD^i(\CFrD{\alpha-1}{a+}u_x)(a+)
%    =\MD^i(\CFrD{\alpha}{a+}u)(a+),\quad 0\le i\le m-1,
% $$
% and these traces are finite by assumption.
Therefore,
$u_x(\cdot,t)\in{\mathcal U}^{\alpha-1,m}_{a+}(I_1)$.
Using the same argument with $\alpha$ replaced by $\alpha-1$ as for \eqref{Th45-1} and
the estimate for $\Proj^+_{p_1}$  in Theorem~\ref{EndPi}, we obtain
\begin{equation}\label{Th45-3}
\|(\GProj^{+}_{p_1} q-q)(\cdot,t)\|_{I_1}
\le C\,\sqrt d\,h_1^{\alpha-1/2}
p_1^{-\min\{2\alpha-3/2,\alpha+m-3/2\}}
U^{\alpha,m}_{a+}(u(\cdot,t);I_1).
\end{equation}
With the aid of the above relevant estimates, we can obtain
\begin{equation}\label{err-dp-1}
\begin{aligned}
&\sup_{0\le t\le T}\|(\GProj^{-}u-u)(\cdot,t)\|_{I_1}
+\int_{0}^{T}\|(\GProj^{-}u_t-u_t)(\cdot,t)\|_{I_1}\,\rd t
+\|\GProj^{+}q-q\|_{L^2(I_1\times(0,T))}
\\
&\qquad\quad
+\|\GProj^{-}u_{\rm ic}-u_{\rm ic}\|_{I_1}
\le
C
\frac{h^{\alpha-1/2}}{p^{\min\{2\alpha-3/2,\alpha+m-3/2\}}}
\Big(\sup_{0\le t\le T}U^{\alpha,m}_{a+}(u(\cdot,t);I_1)
\\
&\qquad\quad\qquad\quad\qquad\quad+\int_{0}^{T}U^{\alpha,m}_{a+}(u(\cdot,t);I_1)\,\rd t
+\sqrt{d}\,\|U^{\alpha,m}_{a+}(u(\cdot,t);I_1)\|_{L^2(0,T)}\Big),
\end{aligned}
\end{equation}
which yields the error bound in \eqref{MainII-a-dp} on $I_1.$

\vspace{0.3em}
%\noindent\emph{Step 2. Estimates on the remaining intervals $I_j$, $j=2,\dots,M$.}\,

For the other sub-intervals $I_j,~ j=2,\dots,M$, we assumed the  solution has a usual Sobolev regularity and can use     the approximation results of the Gauss-Radau projections as in  \cite{Castillo2002MC}
  (also see Remark \ref{rmk31A}) derive that for $p\ge s,$
  $$
\|\omega-\Proj_p^\pm  \omega\|_{L^2(\RefDomain)} \leq C
\, p ^{-{s}-1} \|\omega^{({s}+1)}\|_{L^2(\RefDomain)},
$$
and
\begin{equation} \label{Cor00}
|\omega(\pm1)-\Proj_p^\pm  \omega(\pm1)|  \leq C\,  p ^{-{s}-1/2} \|\omega^{({s}+1)}\|_{L^2(\RefDomain)},
\end{equation}
where  $C$ depends on $s$ but is independent of $p$ and $\omega$.  Then using a similar re-scaling argument as before, we can obtain the order   $ {\mathcal O}(\tfrac{h^{s+1}}{p^{s+1}})$ (for $d=0$) and  $ {\mathcal O}(\tfrac{h^{s}}{p^{s}})$ (for $d\not=0$) in   \eqref{MainII-a-d0}-\eqref{MainII-a-dp}. Here, we omit the details, and complete the proof.

{\color{red}

}

\section{Extensions to interior singularities:
 Fitted and unfitted cases}
\label{two cases}
\setcounter{equation}{0}
\setcounter{lmm}{0}
\setcounter{thm}{0}

In this section, we  extend
the  analysis of the LDG scheme
to singular solutions with an interior singularity, say at $x=\theta\in (a,b).$
We show the different convergence behaviors for two cases: $\theta$
 aligned with the grid (fitted case) or located inside a sub-interval $I_j$ (unfitted case).

% Motivated by the numerical evidence presented in Figure~\ref{Figs_2}, we study the
% $p$--version error behavior of the LDG method for solutions possessing
% singularities at different spatial locations.
% Two cases are considered: a singularity at the node $x_1$ separating $I_1$ and $I_2$,
% and a singularity lying strictly inside $I_1$.
% These cases require fractional regularity descriptions beyond left endpoint
% singularities.

% In the fitted setting where the singularity is located at a mesh node $x_j$ ($1\le j\le M-1$) and the solution satisfies
% $u(x,t)=\mathcal{O}(|x-x_j|^\alpha)$ with a non-vanishing leading coefficient $d\neq 0$,
% one can follow the same lines as in the proof of Theorem~\ref{MainEstiA} to obtain the same $p$--version convergence behavior.
% In this case,  the spaces involve the fractional  derivative operator: $\CFrD{\alpha}{1-}$  and the corresponding approximation results can be derived by a simple transformation.
% In this subsection, we analyze the fitted but degenerate case $d=0$, where the leading singular term cancels and the $p$--version exhibits a different convergence behavior.
\subsection{Fitted case}
For simplicity, we assume that the singular point is at $\theta=x_1.$ In this case, we need to involve the Gauss-Radau projections to handle both left- and right- (``end-point'') singularities. Note that on
 $I_2=(x_1,x_2),$ the singularity is of left-endpoint type which can be described by
\eqref{defn:FracSpa-AB}--\eqref{defn:FracSpa-3-1}. While on $I_1=(a,x_1),$ it becomes a right-endpoint
singularity, so we just replace
$\CFrD{\alpha}{\hat a+}$ by $\CFrD{\alpha}{\hat b-}$ in
\eqref{defn:FracSpa-AB}--\eqref{defn:FracSpa-3-1}:
for $\alpha\in (k-1,k)$ and $k,m\in\mathbb N$, we define the space
\begin{equation}\label{defn:FracSpa-AB-right}
{\mathcal U}_{\hat b-}^{\alpha, m}(\Lambda)
:=\big\{u\,:\, u\in W^{k,1}(\Lambda)\ \ {\rm and}\ \
\CFrD{\alpha}{\hat b-}u \in W^{m,1}(\Lambda)\big\},
\end{equation}
and correspondingly, we denote
\begin{equation}\label{defn:FracSpa-3-1-right}
{U_{\hat b-}^{\alpha, m}}({u;\Lambda})
:=\|\MD^{m}(\CFrD{\alpha}{\hat b-}u)\|_{L^1(\Lambda)}
+ \sum_{i=0}^{m-1}|\MD^i (\CFrD{\alpha}{\hat b-}u) (\hat b-)|.
\end{equation}

By an argument analogous to that  for Theorem~\ref{EndPi}, we  can obtain the following
Gauss--Radau projection error estimates for the case of the right-endpoint singularity.
\begin{lemma}\label{EndPiII}
Let $\alpha \in (k-1, k)$ and $k,m\in \mathbb N.$ Then for any
 $\omega\in {\mathcal U}^{\alpha,m}_{1-}(\RefDomain)$ with $1/2< \alpha+m<  p+1,$ we have
\begin{equation}\label{EndPiR-1-1}
		\begin{split}
		\|\omega-\Proj_p^-  \omega\|_{L^2(\RefDomain)}\le& C p^{-\min\{2\alpha+1/2,\alpha+m-1/2\}}\rFraSRNorm,
		\end{split}
		\end{equation}
        \begin{equation}\label{REndPiII-1-1}
		\begin{split}
		\|\omega-\Proj_p^+ \omega\|_{L^2(\RefDomain)}\le& C p^{-\min\{2\alpha+1,\alpha+m-1/2\}}\rFraSRNorm,
		\end{split}
		\end{equation}
and
\begin{equation}\label{EndPiR-2-1}
		\begin{split}
		|\omega(\pm1)-\Proj_p^\pm  \omega(\pm1)|\le Cp^{-\min\{2\alpha,\alpha+m-1/2\}}\rFraSRNorm,
		\end{split}
		\end{equation}
where $C$ is a positive constant independent of $p$ and $\omega$.
\end{lemma}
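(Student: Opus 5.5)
The plan is to reduce Lemma~\ref{EndPiII} to Theorem~\ref{EndPi} through the reflection $\xi\mapsto-\xi$, rather than redoing the Legendre-coefficient analysis. For $\omega\in{\mathcal U}^{\alpha,m}_{1-}(\RefDomain)$, set $\tilde\omega(\xi):=\omega(-\xi)$.

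\emph{Step 1: transfer the space and the norm.} Change variables in the right Caputo derivative
\[
\CFrD{\alpha}{1-}\omega(\xi)=\frac{(-1)^k}{\Gamma(k-\alpha)}\int_\xi^1(\tau-\xi)^{k-\alpha-1}\omega^{(k)}(\tau)\,\rd\tau .
\]
This gives $\CFrD{\alpha}{-1+}\tilde\omega(\xi)=(\CFrD{\alpha}{1-}\omega)(-\xi)$, with the sign convention for the right derivative chosen as usual so that no spurious factor appears; in any case a sign is harmless. Taking integer derivatives, $\MD^i(\CFrD{\alpha}{-1+}\tilde\omega)(\xi)=(-1)^i\MD^i(\CFrD{\alpha}{1-}\omega)(-\xi)$. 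Since $L^1$ norms and absolute values of the traces are invariant under reflection, and $(-1)+$ corresponds to $1-$, we get
\[
\tilde\omega\in{\mathcal U}^{\alpha,m}_{-1+}(\RefDomain),\qquad \rFraSLNorm[\tilde\omega]=\rFraSRNorm,
\]
where the left-hand side means $U^{\alpha,m}_{-1+}(\tilde\omega;\RefDomain)$.

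\emph{Step 2: transfer the projections.} Reflection preserves $\mathcal P_{p-1}(\RefDomain)$ and the $L^2$ inner product, and it swaps the collocation points $\mp1$. Checking the defining conditions \eqref{LPiMinus-1} therefore shows
\[
\Proj_p^\mp\tilde\omega=(\Proj_p^\pm\omega)(-\,\cdot),
\]
so $\|\omega-\Proj_p^\pm\omega\|=\|\tilde\omega-\Proj_p^\mp\tilde\omega\|$ and $|(\omega-\Proj_p^\pm\omega)(\pm1)|=|(\tilde\omega-\Proj_p^\mp\tilde\omega)(\mp1)|$.

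\emph{Step 3: apply Theorem~\ref{EndPi} to $\tilde\omega$.} The bound \eqref{EndPi-1-1} for $\Proj_p^-\tilde\omega$ gives \eqref{REndPiII-1-1} for $\Proj_p^+\omega$ with rate $\min\{2\alpha+1,\alpha+m-1/2\}$. The bound \eqref{EndPiII-1-1} for $\Proj_p^+\tilde\omega$ gives \eqref{EndPiR-1-1} for $\Proj_p^-\omega$ with rate $\min\{2\alpha+1/2,\alpha+m-1/2\}$. The pointwise estimate \eqref{EndPi-2-1} covers both endpoints, so after the swap it yields \eqref{EndPiR-2-1}. The hypotheses $1/2<\alpha+m<p+1$ carry over unchanged.

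\emph{Expected obstacle.} Step 1 is the only step needing care. One must confirm that the sign and orientation conventions for $\CFrD{\alpha}{1-}$ match, so that the trace terms $\MD^i(\cdot)(1-)$ correspond exactly, up to sign, to the left traces. If the paper's convention for the right derivative differs by $(-1)^k$, this changes nothing, since only absolute values enter the seminorm. Alternatively, one could argue directly from the mirrored coefficient formula: \eqref{Unend-0} with the factor $(-1)^{n+i}$ replaced by $(-1)^i$, since $L_n(-\xi)=(-1)^nL_n(\xi)$. Lemma~\ref{LPiMinus}, whose right-hand sides are reflection-invariant up to the $\pm$ swap, then gives the same conclusion.
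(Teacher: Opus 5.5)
Your reflection argument is correct, and it takes a cleaner route than the one the paper indicates.

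The paper gives no separate proof. It only says the lemma follows ``by an argument analogous to that for Theorem~\ref{EndPi}'', which means redoing the coefficient analysis with the right-endpoint analogue of \eqref{Unend-0}--\eqref{dUnend-0}. In that analogue the traces sit at $1-$, and because $L_n(1)=1$ the alternating factor $(-1)^n$ disappears. Consequently it is the difference $\hat\omega'_p/(2p+1)-\hat\omega'_{p+1}/(2p+3)$, not the sum, that benefits from cancellation. This is why the rates $2\alpha+1$ and $2\alpha+1/2$ trade places between $\Proj_p^+$ and $\Proj_p^-$. It is essentially the alternative you sketch in your last sentence.

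Your main argument avoids all of this. The identity $\CFrD{\alpha}{-1+}\tilde\omega(\xi)=(\CFrD{\alpha}{1-}\omega)(-\xi)$ is exact under the standard $(-1)^k$ convention for the right Caputo derivative, so the two seminorms coincide. Uniqueness of the $p+1$ Gauss--Radau conditions gives $\Proj_p^{\mp}\tilde\omega=(\Proj_p^{\pm}\omega)(-\,\cdot)$. The three estimates are then relabelled instances of \eqref{EndPi-1-1}, \eqref{EndPiII-1-1} and \eqref{EndPi-2-1}.

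Your route buys brevity and needs no new special-function bounds. It inherits the explicit constants of Theorem~\ref{EndPi} verbatim and explains the swap of rates structurally. The direct route buys an explicit right-endpoint coefficient formula as a standalone tool, but for this lemma it merely duplicates work already done.
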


A standard scaling argument, as used in the proof of Theorem~\ref{MainEstiA}
(see Section~\ref{pfMainEstiA}), transfers the above estimates from the reference
element to a general element. This yields the following Corollary.

\begin{cor}\label{Cor:34}
Assume that the singular point is at $\theta=x_1,$ and that
 $\tilde\omega|_{I_{1}}\in {\mathcal U}^{\alpha,m}_{x_{1}-}(I_{1})$  with $ \alpha > 0,$ $0<h\le 2,$   and $ \alpha+m< p+1.$
Then the following estimates hold:
\begin{equation*}\label{Cor:34-1a}
		\begin{split}
\big\|\GProj^{-}_{p} \tilde\omega-\tilde\omega\big\|_{I_{1}} \leq &  C h^{\alpha+1/2}p^{-\min\{2\alpha+1/2,\alpha+m-1/2\}}\, \rFraMiSRNorm ,
	\end{split}
		\end{equation*}	
        \begin{equation*}\label{Cor:34-1b}
		\begin{split}
\big\|\GProj^{+}_{p} \tilde\omega-\tilde\omega\big\|_{I_{1}} \leq &  C h^{\alpha+1/2}p^{-\min\{2\alpha+1,\alpha+m-1/2\}}\, \rFraMiSRNorm ,
	\end{split}
		\end{equation*}	
and
\begin{equation*}\label{Cor:34-2}
		\begin{split}
\big|\big(\GProj^{+}_{p} \tilde\omega-\tilde\omega\big)\big(x_{1}\big)\big| & \leq C h^{\alpha}p^{-\min\{2\alpha,\alpha+m-1/2\}}\,\rFraMiSRNorm ,
\end{split}
		\end{equation*}
\begin{equation*}\label{Cor:34-3}
		\begin{split}	
\big|\big(\GProj^{-}_{p} \tilde\omega-\tilde\omega\big)\big(a\big)\big| & \leq
C h^{\alpha}p^{-\min\{2\alpha,\alpha+m-1/2\}}\rFraMiSRNorm ,
\end{split}
		\end{equation*}
where $C$ depends on $\alpha$ but is independent of $p$ and $\tilde\omega$.
\end{cor}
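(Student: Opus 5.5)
The plan is to obtain Corollary~\ref{Cor:34} from Lemma~\ref{EndPiII} by the same affine rescaling used in Section~\ref{pfMainEstiA}, now with the singular endpoint on the right.

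\textbf{Step 1: pull back to the reference interval.} Let $\chi(\cdot;I_1)$ be the map \eqref{map0} and set $\omega:=\tilde\omega|_{I_1}\circ\chi$. Then $\Proj_p^\pm\omega=(\GProj_p^\pm\tilde\omega)\circ\chi$, since $\chi$ maps $\mathcal P_p$ onto itself, preserves the orthogonality conditions up to the constant Jacobian, and sends $\mp1$ to $a$ and $x_1$ respectively. This gives
\[
\|\GProj_p^\pm\tilde\omega-\tilde\omega\|_{I_1}=(h_1/2)^{1/2}\|\Proj_p^\pm\omega-\omega\|_{L^2(\RefDomain)}.
\]
Point values are unchanged: $(\GProj_p^+\tilde\omega-\tilde\omega)(x_1)=(\Proj_p^+\omega-\omega)(1)$ and $(\GProj_p^-\tilde\omega-\tilde\omega)(a)=(\Proj_p^-\omega-\omega)(-1)$. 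Note that these are the nodes where each projection is \emph{not} interpolatory, matching the $\pm$ in \eqref{EndPiR-2-1}.

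\textbf{Step 2: scale the right Caputo derivative.} I will redo the computation \eqref{Cor:33-6} for $\CFrD{\alpha}{1-}\omega(\xi)=\frac{(-1)^k}{\Gamma(k-\alpha)}\int_\xi^1(\tau-\xi)^{k-\alpha-1}\omega^{(k)}(\tau)\,\rd\tau$. Substituting $\tau=\chi(y)$ should give
\[
\CFrD{\alpha}{1-}\omega(\xi)=(h_1/2)^{\alpha}\,\CFrD{\alpha}{x_1-}\tilde\omega(x),
\]
because the powers $(h_1/2)^{k}$, $(h_1/2)^{1+\alpha-k}$ and $(h_1/2)^{-1}$ combine to $(h_1/2)^{\alpha}$. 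Each further $\xi$-derivative then contributes an additional factor $h_1/2$, as in \eqref{Cor:33-7}. Accounting for the $L^1$ change of measure, the top-order term carries $(h_1/2)^{\alpha+m}$ and the trace terms at $1-$ carry $(h_1/2)^{\alpha+i}$. Since $h_1\le h\le 2$, all these factors are bounded by $(h_1/2)^\alpha$, so
\[
\rFraSRNorm\le (h_1/2)^{\alpha}\,\rFraMiSRNorm .
\]

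\textbf{Step 3: combine.} Inserting this into \eqref{EndPiR-1-1}, \eqref{REndPiII-1-1} and \eqref{EndPiR-2-1} gives the stated bounds. The $L^2$ estimates carry the factor $h_1^{\alpha+1/2}$ and the point values carry $h_1^{\alpha}$. Finally, $h_1^{\alpha}\le h^{\alpha}$ because $\alpha>0$, and the constants depend only on $\alpha$ (and $m$).

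\textbf{Main obstacle.} The only delicate point is the sign convention in the right-sided Caputo operator. I must check that the factor $(-1)^k$ from $\frac{\rd^k}{\rd\tau^k}$ and the orientation of the integral are consistent on $I_1$ and on $\RefDomain$, so that no stray signs or $h$-powers appear. Since $\chi$ is orientation-preserving, I expect this to go through exactly as in the left-sided case.
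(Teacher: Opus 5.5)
Your proposal is correct and is essentially the paper's own argument: Lemma~\ref{EndPiII} on $\RefDomain$ combined with the affine rescaling of Subsection~\ref{pfMainEstiA}, with the non-interpolatory endpoints and the identity $\CFrD{\alpha}{1-}\omega=(h_1/2)^{\alpha}\,(\CFrD{\alpha}{x_1-}\tilde\omega)\circ\chi$ handled exactly as you describe. One small correction, which the paper's own scaling computation shares: since $\rd\xi=(2/h_1)\,\rd x$, the top-order $L^1$ term scales like $(h_1/2)^{\alpha+m-1}$ rather than $(h_1/2)^{\alpha+m}$, so the bound $\rFraSRNorm\le(h_1/2)^{\alpha}\,\rFraMiSRNorm$ genuinely uses $m\ge1$ (for $m=0$ one would lose a power of $h$); for $m\ge 1$ your conclusion stands.
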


% Now, we are ready to present our optimal convergence result for the class of solutions with a singularity at $x_1$.
Define the broken Sobolev space associated to the mesh $\mathcal T$
as an  extension of \eqref{SpaceL0}–\eqref{NormL0} for singular solutions on fitted grids:
$$
{\mathcal W}^{\alpha,m,s}_{x_1}(I;\mathcal T)
:=\big\{ v:I\to\mathbb{R}:\ v|_{I_1}\in {\mathcal U}^{\alpha,m}_{x_1-}(I_1),\
v|_{I_2}\in {\mathcal U}^{\alpha,m}_{x_1+}(I_2),\
v|_{I_j}\in H^{s+1}(I_j),\ j=3,\ldots,M\big\},
$$
and denote
\begin{equation*}
\begin{split}
\|u\|_{{\mathcal W}^{\alpha,m,s}_{x_1}(I;\mathcal T)}
:=
{U_{x_1-}^{\alpha,m}}(u;I_1)
+{U_{x_1+}^{\alpha,m}}(u;I_2)
+\sum_{j=3}^{M}\|u^{(s+1)}\|_{L^2(I_j)}.
\end{split}
\end{equation*}
\begin{comment}
\begin{thm}\label{MainEstiB}
Let $u$ be the solution of \eqref{CDEq} and
$q=\sqrt{d}\,u_x,$ and let $(u_{h}^{p},q_{h}^{p})^{\T}$ be the LDG solution of
\eqref{LDG-1}–\eqref{ubsUB} with the regularity $\VertV{u}_{\mathcal{E}, T}<\infty$, where   $\VertV{u}_{\mathcal{E}, T}$ denotes the norm  in \eqref{NewNormA}, but  $\|\cdot\|_{\mathcal W}$ is replaced by
$\|\cdot\|_{{\mathcal W}^{\alpha,m,s}_{x_1}(I;\mathcal T)}$.
Then for $h=\max\{h_j\}$ and $p=\min\{p_j\}$ with $p>\alpha+m-1$, we have
\begin{itemize}
\item[(i)]    for $d=0$ and $p\ge s,$
\begin{equation}\label{MainII-b-d0}
\|(u-u_{h}^{p})(\cdot,T)\|
\le
C\Big(
\frac{h^{\alpha+1/2}}{p^{\min\{2\alpha+1/2,\alpha+m-1/2\}}}
+\frac{h^{s+1}}{p^{s+1}}
\Big)
\VertV{u}_{\mathcal{E},T};
\end{equation}
\item [(ii)]  for $d\neq0$
and $p\ge s-1,$
\begin{equation}\label{MainII-b-dp}
\|(u-u_{h}^{p})(\cdot,T)\|
+\|q-q_{h}^{p}\|_{Q_T}
\le
C\Big(
\frac{h^{\alpha-1/2}}{p^{\min\{2\alpha-3/2,\alpha+m-3/2\}}}
+\frac{h^{s}}{p^{s}}
\Big)
\VertV{u}_{\mathcal{E},T},
\end{equation}
\end{itemize}
where the constant
$C$ depends on $\alpha, s,$  but is independent of $h,$ $p$ and $u$.
\end{thm}
\end{comment}
\begin{thm}\label{MainEstiB}
Let $u$ be the solution of \eqref{CDEq} and $q=\sqrt{d}\,u_x$, and let $(u_h^p,q_h^p)^{\T}$
be the LDG solution of \eqref{LDG-1}--\eqref{ubsUB} with the regularity
$\VertV{u}_{\mathcal{E},T}<\infty$, where $\VertV{u}_{\mathcal{E},T}$ denotes the norm in
\eqref{NewNormA}, but $\|\cdot\|_{\mathcal W}$ is replaced by
$\|\cdot\|_{{\mathcal W}^{\alpha,m,s}_{x_1}(I;\mathcal T)}$.
Then for $h=\max\{h_j\}$ and $p=\min\{p_j\}$ with $p>\alpha+m-1$, we have, for $d=0$ and $p\ge s$,
\begin{equation}\label{MainII-b-d0}
\|(u-u_h^p)(\cdot,T)\|
\le
C\Big(
\frac{h^{\alpha+1/2}}{p^{\min\{2\alpha+1/2,\alpha+m-1/2\}}}
+\frac{h^{s+1}}{p^{s+1}}
\Big)\VertV{u}_{\mathcal{E},T}.
\end{equation}
For $d\neq 0$ (and $p\ge s-1$), under the same regularity assumption $\VertV{u}_{\mathcal{E},T}<\infty$,
the diffusion case admits the same $h$-- and $p$--convergence orders as in Theorem~\ref{MainEstiA} {\rm (ii)}.
Here the constant $C$ depends on $\alpha$ and $s$, but is independent of $h$, $p$, and $u$.
\end{thm}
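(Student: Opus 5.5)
The proof follows the same route as that of Theorem~\ref{MainEstiA} in Section~\ref{pfMainEstiA}. We again start from the global bound \eqref{globalbound}--\eqref{ATcase}. Since $\pi^\pm$ are assembled element by element, each of the four projection-error terms splits into a sum over $I_j$. On $I_j$, $j\ge 3$, the usual Sobolev regularity together with \eqref{Cor00} and rescaling gives the contributions $\mathcal O(h^{s+1}p^{-s-1})$ for $d=0$ and $\mathcal O(h^{s}p^{-s})$ for $d\neq0$, exactly as in \cite{Castillo2002MC}. So the work is confined to the two elements $I_1$ and $I_2$ adjacent to the singular node $x_1$.

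\emph{Element $I_2=(x_1,x_2)$.} Here the singularity is of left-endpoint type, with $u(\cdot,t)|_{I_2}\in{\mathcal U}^{\alpha,m}_{x_1+}(I_2)$. The map $\chi(\cdot;I_2)$ and the scaling identities \eqref{Cor:33-6}--\eqref{Cor:33-7} (with $a$ replaced by $x_1$) give $U^{\alpha,m}_{-1+}(\omega;\RefDomain)\le (h_2/2)^{\alpha}U^{\alpha,m}_{x_1+}(u;I_2)$. Theorem~\ref{EndPi} then yields, for $\pi^-$, the bound $h^{\alpha+1/2}p^{-\min\{2\alpha+1,\alpha+m-1/2\}}$, as in \eqref{Th45-1}--\eqref{Th45-2}. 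The same bound holds for $u_t$ and $u_{\rm ic}$.

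\emph{Element $I_1=(a,x_1)$.} Here the singularity sits at the right endpoint and the projection used for $u$ is $\pi^-$. Lemma~\ref{EndPiII}, transferred to $I_1$ by Corollary~\ref{Cor:34}, gives only $h^{\alpha+1/2}p^{-\min\{2\alpha+1/2,\alpha+m-1/2\}}$ for $\|\pi^-_{p}u-u\|_{I_1}$. The half-order loss occurs because $\pi^-$ interpolates at $x_1$, which is precisely the singular end. The same holds for $u_t$ and $u_{\rm ic}$. For $d=0$, the terms involving $q$ disappear, and summing the $I_1$, $I_2$ and remaining contributions, then integrating or taking the supremum in $t$ as in \eqref{err-d0-1}, gives \eqref{MainII-b-d0}. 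The rate is governed by the worse of the two elements, namely $I_1$.

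\emph{Case $d\neq0$.} As in part (ii) of Theorem~\ref{MainEstiA}, we use $\CFrD{\alpha-1}{x_1\pm}(u_x)=\CFrD{\alpha}{x_1\pm}u$, so $u_x$ lies in the corresponding spaces with index $\alpha-1$. On $I_2$, Theorem~\ref{EndPi} (the $\Proj^+$ estimate) gives $\|\pi^+q-q\|_{I_2}\lesssim\sqrt d\,h^{\alpha-1/2}p^{-\min\{2\alpha-3/2,\alpha+m-3/2\}}$. On $I_1$, $\pi^+$ interpolates at the regular end $a$, so \eqref{REndPiII-1-1} gives the better rate $p^{-\min\{2\alpha-1,\alpha+m-3/2\}}$. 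The $u$-terms on $I_1$ have order $p^{-\min\{2\alpha+1/2,\ldots\}}$ and are dominated. The boundary term $\frac{d}{c_{11}(b)}\|(\pi^+q-q)(b^-)\|^2$ in $A(T)$ lives on $I_M$ and is regular. Collecting these contributions gives the same orders as \eqref{MainII-a-dp}.

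The main obstacle is bookkeeping rather than a genuinely new estimate. We must check for each element which endpoint the projection interpolates relative to the singularity, and confirm that the loss on $I_1$ for $d=0$ (from $\pi^-$ at $x_1$) is the binding one. For $d\neq0$ we must verify that the analogous loss does not appear, since $\pi^+$ on $I_2$ interpolates at $x_1$ but is already covered by \eqref{EndPiII-1-1}. We also need the index shift $\alpha\to\alpha-1$ to satisfy the hypotheses $1/2<\alpha-1+m<p+1$ and to stay non-integer. If $\alpha<1$, the $q$-estimate would have to be read with $\alpha-1\in(-1,0)$, which lies outside the stated framework. This case would need either an extra assumption on $\alpha$ or a separate check of the formula \eqref{Unend-0} for that range.
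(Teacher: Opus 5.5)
Your proposal is correct and follows the paper's route. You split the global bound \eqref{globalbound}--\eqref{ATcase} elementwise and use Corollary~\ref{Cor:34} on $I_1$, where $\pi^-$ interpolates at the singular node, together with the left-endpoint estimates of Theorem~\ref{EndPi} on $I_2$ and the classical estimates of \cite{Castillo2002MC} elsewhere; your $d\neq0$ bookkeeping fills in the details the paper omits, and your caveat that the index shift $\alpha\to\alpha-1$ needs $\alpha>1$ and $\alpha+m-1>1/2$ applies equally to the paper's Theorem~\ref{MainEstiA}(ii).
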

\begin{proof}
The proof is a direct extension of that of Theorem~\ref{MainEstiA} to the fitted interior singularity at the mesh node $x_1$.
As in Subsection~\ref{pfMainEstiA}, we estimate the terms in \eqref{globalbound}--\eqref{ATcase} by elementwise Gauss--Radau projections
$\pi^\pm u|_{I_j}=\pi_{p_j}^\pm(u|_{I_j})$, and the only new point is to control the two elements adjacent to $x_1$, namely
$I_1=(a,x_1)$ and $I_2=(x_1,x_2)$.
On $I_1$ the singularity is attached to the \emph{right} endpoint $x_1$, hence
$u(\cdot,t)|_{I_1}\in{\mathcal U}^{\alpha,m}_{x_1-}(I_1)$ and Corollary~\ref{Cor:34} yields, for $0\le t\le T$,
\begin{equation}\label{BI1-u2}
\|(u-\GProj^-_{p_1}u)(\cdot,t)\|_{I_1}
\le C\,h_1^{\alpha+1/2}\,p_1^{-\min\{2\alpha+1/2,\alpha+m-1/2\}}\,
U^{\alpha,m}_{x_1-}(u(\cdot,t);I_1),
\end{equation}
and the same bound holds with $u$ replaced by $u_t$.
On $I_2$ the singularity is attached to the \emph{left} endpoint $x_1$, so
$u(\cdot,t)|_{I_2}\in{\mathcal U}^{\alpha,m}_{x_1+}(I_2)$ and, by the same scaling argument as in
Subsection~\ref{pfMainEstiA}, we obtain
\begin{equation}\label{BI2-u2}
\|(u-\GProj^-_{p_2}u)(\cdot,t)\|_{I_2}
\le C\,h_2^{\alpha+1/2}\,p_2^{-\min\{2\alpha+1,\alpha+m-1/2\}}\,
U^{\alpha,m}_{x_1+}(u(\cdot,t);I_2),
\end{equation}
and again the same bound holds with $u$ replaced by $u_t$.
\smallskip

(i)
When $d=0$, all terms involving $q$ in \eqref{globalbound} are absent.
Substituting the above projection estimates on $I_1$ and $I_2$ into
\eqref{globalbound}--\eqref{ATcase} yields the singular contribution in \eqref{MainII-b-d0}.
Combining this with the standard estimates on $I_j$, $j=3,\dots,M$, as in Theorem~\ref{MainEstiA},
completes the proof of \eqref{MainII-b-d0}.

\smallskip

(ii) When $d\neq 0$,
we set $q=\sqrt{d}\,u_x$. The argument is the same extension of Theorem~\ref{MainEstiA} (ii) to the present fitted setting:
the $h$-- and $p$--orders remain the same (with the norm $\|\cdot\|_{\mathcal W}$ replaced by
$\|\cdot\|_{{\mathcal W}^{\alpha,m,s}_{x_1}(I;\mathcal T)}$). Hence we omit the details.
\end{proof}

% We next consider the case where $u$ possesses an interior singularity lying strictly
% inside the first element $I_1=(a,x_1)$.

\subsection{Unfitted case}
It is known that if the location of the singularity does not fit the grid, the convergence order is lower than that of the fitted case. Typically, a loss of at least the  double convergence in $p$ for end-point singularities will occur.
% % Indeed,  we can observe from
% % Figure~\ref{Figs_1} and Figure \ref{Figs3_3} such a loss.
% In this subsection, we extend the framework and idea to study this unfitted case.
To fix the idea, we assume the location of the singularity is at $x=\theta\in I_1
=(a, x_1).$
%For a fixed $\theta\in I_1$, we
Denote
$I_{\theta}^{-}:=(a,\theta)$ and $I_{\theta}^{+}:=(\theta,x_1)$. To optimally characterize this type of singularities (e.g., $u(x,t)=|x-\theta|^\alpha t$), we first extend the fractional  framework as in \eqref{defn:FracSpa-AB}-\eqref{defn:FracSpa-3-1} to this setting. More precisely,
for $\alpha\in (k-1,k)$ with an integer $k\ge 1$, we define the fractional space
\begin{equation}\label{defn:FracSpa-int}
{\mathcal U}_{\theta}^{\alpha}(I_1)
:=\big\{u\,:\, u\in W^{k,1}(I_1),\
\CFrD{\alpha}{\theta-}u \in W^{1,1}(I_{\theta}^{-})
\ \text{and}\
\CFrD{\alpha}{\theta+}u \in W^{1,1}(I_{\theta}^{+})\big\},
\end{equation}
associated with the semi-norm:
\begin{equation}\label{defn:seminorm-int}
{U_{\theta}^{\alpha}}(u;I_1)
:=\big\|\MD\CFrD{\alpha}{\theta-}u\big\|_{L^1(I_{\theta}^{-})}
+\big\|\MD\CFrD{\alpha}{\theta+}u\big\|_{L^1(I_{\theta}^{+})}
+\big|\CFrD{\alpha}{\theta-}u(\theta^-)\big|
+\big|\CFrD{\alpha}{\theta+}u(\theta^+)\big|.
\end{equation}

% With the singularity fixed at $x=\theta_1\in I_1=(a,x_1)$, we present the optimal Gauss--Radau projection estimates for this unfitted case.
The proof of the following theorem on Gauss–Radau projections is very similar to that of Theorem~\ref{EndPi}, so we only outline the essentials to avoid repetition.
\begin{lemma}\label{ErrorLPiMinus}
 Let $\hat \theta=\chi^{-1}(\theta; I_1)\in \RefDomain$ be the mapped location of singularity via \eqref{map0}.
Then for any
$\omega\in{\mathcal U}^{\alpha}_{\hat \theta}(\RefDomain)$ with $0<\alpha\le p+1$, we have
\begin{equation}\label{ErrorLPiMinus-0}
\begin{split}
\|\omega-\Proj_p^\pm  \omega\|_{L^2(\RefDomain)}\le Cp^{-\alpha-1/2} \rFraSNorm,
\end{split}
\end{equation}
and
\begin{equation}\label{ErrorLPiMinus-1}
\begin{split}
|\omega(\pm1)-\Proj_p^\pm  \omega(\pm1)|\le Cp^{-\alpha-1/2} \rFraSNorm,
\end{split}
\end{equation}
where $C$ depends on $\alpha$ but is independent of $p$ and $\omega$.
\end{lemma}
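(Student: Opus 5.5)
The plan mirrors the proof of Theorem~\ref{EndPi}. By Lemma~\ref{LPiMinus}, both the $L^2$ error and the endpoint error of $\Proj_p^\pm$ reduce to two ingredients: the tail $\|\omega-\Proj_p\omega\|_{L^2(\RefDomain)}^2=\sum_{n>p}\frac{2}{2n+1}|\hat\omega_n|^2$, and the two coefficients $\hat\omega'_p,\hat\omega'_{p+1}$ of $\omega'$. It therefore suffices to show
\[
|\hat\omega_n|\le C n^{-\alpha}\,\rFraSNorm
\quad\text{and}\quad
|\hat\omega'_n|\le C n^{-\alpha+1/2}\,\rFraSNorm .
\]
Granting these, the tail gives $\big(\sum_{n>p} n^{-1-2\alpha}\big)^{1/2}\sim p^{-\alpha}$, which is one half order short of the target. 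So the coefficient bound must instead be $|\hat\omega_n|\le Cn^{-\alpha-1/2}$, which gives an $L^2$ tail of order $p^{-\alpha-1/2}$. Correspondingly, I aim for $|\hat\omega'_n|\le C n^{-\alpha+1/2}$. Then $\frac{2}{2p+1}|\hat\omega'_p|\lesssim p^{-\alpha-1/2}$ yields \eqref{ErrorLPiMinus-1}, and the combination $\big(\frac{2}{2p+1}\big)^{1/2}\frac{|\hat\omega'_p|+|\hat\omega'_{p+1}|}{2p+1}\lesssim p^{-\alpha-1}$ is of higher order, so \eqref{ErrorLPiMinus-0} follows.

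To obtain the coefficient bounds, I split $\hat\omega_n=\frac{2n+1}{2}\big(\int_{-1}^{\hat\theta}+\int_{\hat\theta}^1\big)\omega L_n\,{\rm d}\xi$. On $(\hat\theta,1)$ I will perform $k$ integer integrations by parts using $L_n=\frac{d}{d\xi}\frac{L_{n+1}-L_{n-1}}{2n+1}$, followed by one fractional integration by parts with the right Riemann–Liouville integral ${\mathcal I}^{k-\alpha}_{1-}$ of the iterated antiderivative. This is the interior analogue of the formula \eqref{Unend-0} from \cite{Liu2021ACMOptimal}, and it expresses the contribution through $\int_{\hat\theta}^1 \MD(\CFrD{\alpha}{\hat\theta+}\omega)\,\cdot\,(\text{fractional Gegenbauer-type kernel})$ plus a boundary term at $\hat\theta^+$ involving $\CFrD{\alpha}{\hat\theta+}\omega(\hat\theta^+)$. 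The symmetric treatment applies on $(-1,\hat\theta)$ with $\CFrD{\alpha}{\hat\theta-}$. The integer-order boundary terms at $\hat\theta$ cancel between the two sides because $\omega\in W^{k,1}$. The boundary terms at $\pm1$ vanish, since the Legendre antiderivatives carry factors $(1-\xi^2)^j$.

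The main obstacle is bounding the kernel, which is now evaluated at an interior point. Near $\pm1$ the GGF-F kernels decay like $n^{-\alpha-m}$ times weights, but in the interior the generalized Gegenbauer functions of fractional degree only admit the bound $\max_{|\xi|\le 1-\delta}(1-\xi^2)^{\alpha}|{}^rG^{(\alpha+1/2)}_{n-\alpha}(\xi)|\lesssim n^{-\alpha-1/2}$ (uniformly after including the weight, cf.\ \cite[(4.30)]{Liu19MC-Optimal}). This loss of one half order relative to the endpoint case is exactly what produces $p^{-\alpha-1/2}$ instead of $p^{-2\alpha-1}$. I will first try to use \cite[(4.30)]{Liu19MC-Optimal} directly for the weighted uniform bound over the whole interval, which gives a constant independent of $\hat\theta$. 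For the boundary terms at $\hat\theta$, the kernel value at $\hat\theta$ is controlled by the same bound. The derivative coefficients $\hat\omega'_n$ are treated in the same way with $\alpha$ replaced by $\alpha-1$, as in \eqref{dUnend-0}. Finally, the Gamma-ratio asymptotics \eqref{gamratioA} convert the explicit bounds into powers of $p$, and the condition $\alpha\le p+1$ keeps all Gamma arguments positive.
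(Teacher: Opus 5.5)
Your reduction is the paper's. Via Lemma~\ref{LPiMinus}, both estimates reduce to the tail $\sum_{n>p}\frac{2}{2n+1}|\hat\omega_n|^2$ and the coefficients $\hat\omega'_p,\hat\omega'_{p+1}$. The paper quotes the needed bounds \eqref{HatUnCaseC0}--\eqref{duHatUn} from \cite{Liu2021ACMOptimal}; you re-derive them by splitting at $\hat\theta$, which is legitimate.

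Your final targets $|\hat\omega_n|\lesssim n^{-\alpha-1/2}$ and $|\hat\omega'_n|\lesssim n^{-\alpha+1/2}$ are the right ones. Incidentally, by \eqref{gamratioA}, \eqref{HatUnCaseC0} as printed only gives $n^{1/2-\alpha}$; its numerator should read $\Gamma((n-\alpha)/2)$, as in \eqref{BoundUnEnd-ReNew} with $m=1$.

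\textbf{The kernel you bound is the wrong one.} The $k$ integer integrations by parts plus one fractional one pair $v=\CFrD{\alpha}{\hat\theta+}\omega$ with $(1-\xi^2)^{\alpha}\,{}^{r\!}G^{(\alpha+1/2)}_{n-\alpha}$. The further integration by parts that produces $\MD v$ and the trace $v(\hat\theta^+)$ turns this into $(1-\xi^2)^{\alpha+1}\,{}^{r\!}G^{(\alpha+3/2)}_{n-\alpha-1}$, which is of size $O(n^{-\alpha-3/2})$. Only this, times the prefactor $2n+1$, yields $n^{-\alpha-1/2}$. Your quoted $O(n^{-\alpha-1/2})$ bound for the index-$(\alpha+1/2)$ kernel, times $2n+1$, gives back exactly the insufficient $n^{1/2-\alpha}$ you discarded.

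That index-$(\alpha+1/2)$ kernel is in fact the correct one for $\hat\omega'_n$, because $\CFrD{\alpha-1}{\hat\theta+}\omega'=v$. Its shift $\alpha\to\alpha-1$, which you propose, would lose a further order.

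The contrast with Theorem~\ref{EndPi} is also misattributed; it is not a half-order loss in the kernel. There, the $2\alpha+1$ comes from the traces at $\xi=-1$, i.e.\ the $\sin(\alpha\pi)\widehat C_{n,\alpha+i}=O(n^{-2\alpha-1})$ terms. Here the traces sit at the interior point $\hat\theta$, where the kernel is only $O(n^{-\alpha-3/2})$.

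\textbf{The uniform bound ``over the whole interval'' fails for the $\hat\omega'_n$-kernel when $0<\alpha<1/2$.} The piece $(\hat\theta,1)$ reaches $\xi=1$, where ${}^{r\!}G^{(\alpha+1/2)}_{n-\alpha}(1)=1$. So at distance $O(n^{-2})$ from $1$ the weighted kernel has size $n^{-2\alpha}\gg n^{-\alpha-1/2}$. Your argument then only gives $p^{-\min\{2\alpha,\alpha+1/2\}}$ in \eqref{ErrorLPiMinus-1}, the same $2\alpha$ as in \eqref{EndPi-2-1}.

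This is sharp, so \eqref{ErrorLPiMinus-1} is false as stated for $\alpha<1/2$, and the quoted \eqref{duHatUn} cannot hold there either. Here is a counterexample:
\begin{itemize}
\item Take $\delta=1/(4p^2)$.
\item Let $v=0$ on $(\hat\theta,1-2\delta)$, linear from $0$ to $1$ on $(1-2\delta,1-\delta)$, and $v=1$ on $(1-\delta,1)$.
\item Set $\omega={\mathcal I}^{\alpha}_{\hat\theta+}v$ on $(\hat\theta,1)$ and $\omega=0$ on $(-1,\hat\theta)$.
\end{itemize}
Then $\CFrD{\alpha}{\hat\theta+}\omega=v$ and $U^{\alpha}_{\hat\theta}(\omega;\RefDomain)=1$. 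Moreover $\omega'\ge0$ is supported in $(1-2\delta,1)$, where $L_p\ge1/2$. Hence, by \eqref{LPiMinus-3},
\[
|(\omega-\Proj^\pm_p\omega)(\pm1)|=\big|\int_{-1}^1\omega'L_p\,{\rm d}\xi\big|\ge\omega(1)/2\ge\delta^{\alpha}/(2\Gamma(\alpha+1))\sim p^{-2\alpha}.
\]

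The $L^2$ estimate \eqref{ErrorLPiMinus-0} is unaffected. The extra term in \eqref{LPiMinus-2} is then $O(p^{-2\alpha-1/2})$, and the $\hat\omega_n$-kernel has index $\alpha+3/2>1$. So, once the kernels are corrected, your route proves \eqref{ErrorLPiMinus-0} for all $\alpha>0$ and \eqref{ErrorLPiMinus-1} for $\alpha\ge1/2$.
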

\begin{proof}
Recall from \cite[(3.30) and Cor.~3.1]{Liu2021ACMOptimal} that for each $n\ge \alpha$,
the Legendre expansion coefficient satisfies
\begin{equation}\label{HatUnCaseC0}
\begin{split}
|\hat {\omega}_n| \le
\frac{(2n+1)}{2^{\alpha+2} \sqrt{\pi}}
\frac{\Gamma( ({n-\alpha})/ 2+1)}{ \Gamma( ({n+\alpha}+3)/2)}
\,\rFraSNorm .
\end{split}
\end{equation}
By an argument analogous to that in \cite[(3.30)]{Liu2021ACMOptimal}, we obtain
the following estimate for the derivative $\omega'(\xi)$,
\begin{equation}\label{duHatUn}
\begin{split}
|\hat \omega'_n| \le
\frac{(2n+1)}{ 2^{\alpha+1} \sqrt{\pi}}
\frac{\Gamma( ({n-\alpha}+1)/ 2)}{\Gamma( ({n+\alpha})/2+1)} \,\rFraSNorm .
\end{split}
\end{equation}
By an argument analogous to that used in the proof of Theorem~\ref{EndPi},
together with \eqref{HatUnCaseC0} and \eqref{duHatUn}, we obtain
\eqref{ErrorLPiMinus-0}.
\end{proof}

A standard scaling argument, as used in the proof of Theorem~\ref{MainEstiA}
(see Subsection~\ref{pfMainEstiA}), extends the above estimates from the reference
element to a general element. This yields the following Corollary.
\begin{cor} \label{CorII:31}
Assume that the interior singularity is located at $x=\theta \in I_1 = (a,x_1)$,
and that $\tilde\omega|_{I_1} \in {\mathcal U}^{\alpha}_{\theta}(I_1)$,
with $0 \le \alpha \le p+1$.
Then the following estimates hold:
\begin{equation}\label{CorII:31-01}
\big\|\GProj^{\pm}_{p}  \tilde\omega- \tilde\omega\big\|_{I_{1}} \leq C
h^{\alpha+1/2}
p^{-\alpha-1/2}
\, U_{\theta}^{\alpha}({\tilde\omega;I_{1}}),
  \end{equation}
and
\begin{subequations}\label{CorII:31-02}
\begin{align}
\big|\big(\GProj^{+}_{p}  \tilde\omega- \tilde\omega\big)\big(x_{1}\big)\big| & \leq C h^{\alpha}
p^{-\alpha-1/2}  \,U_{\theta}^{\alpha}({\tilde\omega;I_{1}}), \label{CorII:31-02a}\\
\big|\big(\GProj^{-}_{p}  \tilde\omega- \tilde\omega\big)\big(a\big)\big| & \leq Ch^{\alpha}
p^{-\alpha-1/2}\,U_{\theta}^{\alpha}({\tilde\omega;I_{1}}), \label{CorII:31-02b}
\end{align}
\end{subequations}	
where $C$ depends on $\alpha$ but is independent of $p$ and $\tilde\omega$.
\end{cor}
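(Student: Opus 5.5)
The plan is to transfer Lemma~\ref{ErrorLPiMinus} from $\RefDomain$ to $I_1$ with the affine map $\chi(\cdot;I_1)$ of \eqref{map0}, exactly as in the derivation of \eqref{Th45-1} in Subsection~\ref{pfMainEstiA}. Set $\omega:=\tilde\omega|_{I_1}\circ\chi$ and $\hat\theta=\chi^{-1}(\theta;I_1)$. Since the Gauss--Radau conditions \eqref{7projections} are invariant under affine maps (orthogonality to $\mathcal P_{p-1}$ and the endpoint interpolation), we have $\Proj_p^\pm\omega=(\GProj_p^\pm\tilde\omega)\circ\chi$. Hence
\[
\|\GProj_p^\pm\tilde\omega-\tilde\omega\|_{I_1}=(h_1/2)^{1/2}\|\Proj_p^\pm\omega-\omega\|_{L^2(\RefDomain)},
\]
and the pointwise errors at $a$ and $x_1$ equal those of $\omega$ at $\mp1$. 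Lemma~\ref{ErrorLPiMinus} then gives the factor $p^{-\alpha-1/2}$, and what remains is to bound $U^\alpha_{\hat\theta}(\omega;\RefDomain)$ by a power of $h$ times $U^\alpha_\theta(\tilde\omega;I_1)$.

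For that scaling I would repeat the computation \eqref{Cor:33-6}, separately on each side of the singular point. On $(\hat\theta,1)$, substituting $x=\chi(\xi)$ and $y=\chi(\tau)$ in the Riemann--Liouville integral defining $\CFrD{\alpha}{\hat\theta+}\omega$ gives
\[
\CFrD{\alpha}{\hat\theta+}\omega(\xi)=(h_1/2)^{\alpha}\,\CFrD{\alpha}{\theta+}\tilde\omega(x).
\]
The right Caputo derivative $\CFrD{\alpha}{\hat\theta-}$ on $(-1,\hat\theta)$ works the same way: the kernel $(\tau-\xi)^{k-\alpha-1}$ and the $k$-th derivatives scale identically. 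The sign $(-1)^k$ in the right derivative is harmless under absolute values.

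Differentiating once more as in \eqref{Cor:33-7} gives an extra factor $h_1/2$ pointwise. The $L^1$ norm on the reference side contributes a factor $2/h_1$ from $\rd\xi=(2/h_1)\rd x$, so these cancel:
\[
\|\MD\CFrD{\alpha}{\hat\theta\pm}\omega\|_{L^1}=(h_1/2)^{\alpha}\|\MD\CFrD{\alpha}{\theta\pm}\tilde\omega\|_{L^1(I_\theta^\pm)}.
\]
The point values at $\hat\theta^\pm$ scale exactly by $(h_1/2)^\alpha$. Therefore $U^\alpha_{\hat\theta}(\omega;\RefDomain)=(h_1/2)^{\alpha}U^\alpha_\theta(\tilde\omega;I_1)$; this is an identity, with no need for $h_1<2$.

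Combining the two steps, the $L^2$ bound \eqref{CorII:31-01} comes out as $h_1^{1/2}\cdot h_1^{\alpha}\,p^{-\alpha-1/2}$. The endpoint bounds \eqref{CorII:31-02} come out as $h_1^{\alpha}p^{-\alpha-1/2}$, since point values carry no Jacobian factor. Finally $h_1\le h$ and $\alpha>0$.

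The main obstacle is making sure the scaling of the $L^1$ seminorm of the derivative is bookkept correctly, since that is where the powers must cancel. The other delicate point is the degenerate endpoint of the range $0\le\alpha$: when $\alpha$ is an integer the Caputo derivative is an ordinary derivative and the same scaling holds. If Lemma~\ref{ErrorLPiMinus} is only available for $0<\alpha$, the case $\alpha=0$ would be treated by the elementary bound $|\hat\omega_n|\lesssim\|\omega'\|_{L^1}$ and the same rescaling.
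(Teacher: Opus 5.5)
Your proposal is correct and follows the paper's route. You pull back to $\RefDomain$ via $\chi(\cdot;I_1)$, apply Lemma~\ref{ErrorLPiMinus}, and rescale the seminorm as in Subsection~\ref{pfMainEstiA}. The paper only records $U^{\alpha}_{\hat\theta}(\omega;\RefDomain)\le (h_1/2)^{\alpha}U^{\alpha}_{\theta}(\tilde\omega;I_1)$ for $0<h_1<2$ (in the proof of Theorem~\ref{MainEstiC}), whereas your Jacobian bookkeeping correctly shows it is an equality.

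The $\alpha=0$ fallback is unnecessary, because ${\mathcal U}^{\alpha}_{\theta}$ is only defined for non-integer $\alpha>0$ and the lemma assumes $\alpha>0$. It would also not have worked for the endpoint bounds \eqref{CorII:31-02}: for merely $\omega'\in L^1$, the endpoint error $\bigl|\int_{-1}^{1}\omega' L_p\,\rd\xi\bigr|$ has no uniform $p^{-1/2}$ rate.
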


Now, we are ready to present our optimal convergence result for the class of solutions with an interior singularity located at $\theta\in I_1$.
Define the broken Sobolev space associated to the mesh $\mathcal T$
as an  extension of \eqref{SpaceL0}–\eqref{NormL0}
for singular solutions on unfitted grids:
$$
{\mathcal W}^{\alpha, s}_{\theta}(I;\mathcal{T}):=\big\{v: I\to \mathbb{R}\,:\, v|_{I_{1}} \in  {\mathcal U_{\theta}^{\alpha}}(I_1),\;  v|_{I_j}\in H^{s+1}(I_{j}),\; j=2, \ldots, M\big\},
$$
and denote
% \begin{equation*}
% 		\begin{split}
%  \|u\|_{\mathcal W_B}:=\|u\|_{{\mathcal W}^{\alpha,  s}_{\theta_1}(I;\mathcal{T})}:= {U_{\theta_1}^{\alpha}} (u, {I_1})
%   + \sum_{j=2 }^M \|u^{(s+1)}\|_{L^2(I_{j})}.
% \end{split}
% 		\end{equation*}
\begin{equation*}\label{NewNormInt}
\begin{split}
\|u\|_{{\mathcal W}^{\alpha,  s}_{\theta}(I;\mathcal{T})}:= {U_{\theta}^{\alpha}} (u;{I_1})
  + \sum_{j=2 }^M \|u^{(s+1)}\|_{L^2(I_{j})}.
\end{split}
\end{equation*}

\begin{thm}\label{MainEstiC}
Let $u$ be the solution of \eqref{CDEq} and
$q=\sqrt{d}\,u_x,$ and let $(u_{h}^{p},q_{h}^{p})^{\T}$ be the LDG solution of
\eqref{LDG-1}–\eqref{ubsUB} with the regularity $\VertV{u}_{\mathcal{E}, T}<\infty$, where   $\VertV{u}_{\mathcal{E}, T}$ denotes the norm  in \eqref{NewNormA}, but  $\|\cdot\|_{\mathcal W}$ is replaced by $\|\cdot\|_{{\mathcal W}^{\alpha,  s}_{\theta}(I;\mathcal{T})}$.
%Denote and assume
%\begin{equation}\label{NewNormC}
% \VertV{u}_{\mathcal{E}, T}^{B}:=
%2\sup _{0\le t\le T}\|u(\cdot, t)\|_{\mathcal W_B}
%+
%3\sqrt{d}\Big(\int_{0}^{T} \|u(\cdot, t)\|_{\mathcal W_B} \rd t\Big)^{1/2}
%+
%\int_{0}^{T} \|u_t(\cdot, t)\|_{\mathcal W_B} \rd t <\infty.
%\end{equation}
Then for $h=\max\{h_j\}$ and $p=\min\{p_j\}$ with $p>\alpha-1$, we have
\begin{itemize}
\item[(i)]    for $d=0$ and $p\ge s,$
\begin{equation}\label{MainII-c-d0}
\|(u-u_{h}^{p})(\cdot,T)\|
\le
C\Big(
\frac{h^{\alpha+1/2}}{p^{\alpha+1/2}}
+\frac{h^{s+1}}{p^{s+1}}
\Big)
\VertV{u}_{\mathcal{E},T};
\end{equation}
\item [(ii)]  for $d\neq0$ and $p\ge s-1,$
\begin{equation}\label{MainII-c-dp}
\|(u-u_{h}^{p})(\cdot,T)\|
+\|q-q_{h}^{p}\|_{Q_T}
\le
C\Big(
\frac{h^{\alpha-1/2}}{p^{\alpha-1/2}}
+\frac{h^{s}}{p^{s}}
\Big)
\VertV{u}_{\mathcal{E},T},
\end{equation}
\end{itemize}
where the constant
$C$ depends on $\alpha, s,$  but is independent of $h,$ $p$ and $u$.
\end{thm}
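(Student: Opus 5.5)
We follow the template of the proof of Theorem~\ref{MainEstiA} in Subsection~\ref{pfMainEstiA}. The starting point is again the global bound \eqref{globalbound}--\eqref{ATcase}. Since $\pi^\pm$ are assembled elementwise, every term there splits into a sum over the $I_j$. On $I_j$, $j=2,\dots,M$, the solution lies in $H^{s+1}(I_j)$, so the estimates recalled at the end of Subsection~\ref{pfMainEstiA} give contributions of order $h^{s+1}p^{-s-1}$ (for $d=0$) and $h^{s}p^{-s}$ (for $d\neq 0$). The only new work is on $I_1$, which contains the unfitted singularity $\theta$. There we replace Theorem~\ref{EndPi} by Lemma~\ref{ErrorLPiMinus}, transferred to $I_1$ through Corollary~\ref{CorII:31}.

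\textbf{The case $d=0$.}
\begin{itemize}
\item Apply \eqref{CorII:31-01} with $\tilde\omega=u(\cdot,t)$ and with $\tilde\omega=u_t(\cdot,t)$, and also to $u_{\rm ic}=u(\cdot,0)$. This bounds $\sup_t\|(\pi^-u-u)(\cdot,t)\|_{I_1}$, $\int_0^T\|(\pi^-u_t-u_t)(\cdot,t)\|_{I_1}\,\rd t$ and $\|\pi^-u_{\rm ic}-u_{\rm ic}\|_{I_1}$.
\item Each bound has the form $Ch^{\alpha+1/2}p^{-\alpha-1/2}$ times the corresponding time norm of $U^{\alpha}_\theta(\cdot;I_1)$.
\item These are dominated by $\VertV{u}_{\mathcal E,T}$, which gives \eqref{MainII-c-d0}.
\end{itemize}

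The scaling step runs exactly as in \eqref{Cor:33-6}--\eqref{Cor:33-7}. Under the map $\chi(\cdot;I_1)$, the two-sided Caputo derivatives $\CFrD{\alpha}{\theta\pm}$ pick up the factor $(h_1/2)^{\alpha}$, and their first derivatives pick up $(h_1/2)^{\alpha+1}$. Hence $U^{\alpha}_{\hat\theta}(\omega;\RefDomain)\le (h_1/2)^{\alpha}U^{\alpha}_{\theta}(u;I_1)$ for $h_1<2$. Together with the Jacobian factor $h_1^{1/2}$ in the $L^2$ norm, this produces the power $h^{\alpha+1/2}$.

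\textbf{The case $d\neq0$.} We must control $\|\pi^+q-q\|_{Q_T}$ on $I_1$ with $q=\sqrt d\,u_x$, and also the boundary term in $A(T)$.
\begin{itemize}
\item As in part (ii) of Subsection~\ref{pfMainEstiA}, we observe that $\CFrD{\alpha-1}{\theta\pm}(u_x)=\CFrD{\alpha}{\theta\pm}u$. Hence $u_x(\cdot,t)\in{\mathcal U}^{\alpha-1}_\theta(I_1)$, with seminorm equal to $U^\alpha_\theta(u;I_1)$. This requires $\alpha>1$, which is needed anyway for $q\in L^2$.
\item Applying \eqref{CorII:31-01} with $\alpha$ replaced by $\alpha-1$ gives
$\|(\pi^+q-q)(\cdot,t)\|_{I_1}\le C\sqrt d\,h^{\alpha-1/2}p^{-\alpha+1/2}U^\alpha_\theta(u(\cdot,t);I_1)$.
\item Squaring and integrating in $t$ yields the $L^2(0,T)$ term in $\VertV{u}_{\mathcal E,T}$.
\item The $u$-terms carry the higher order $h^{\alpha+1/2}p^{-\alpha-1/2}$, so they are harmless.
\end{itemize}

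The boundary term $\frac{d}{c_{11}(b)}\|(\pi^+q-q)(b^-,\cdot)\|^2_{(0,T)}$ lives on $I_M$, not on $I_1$, whenever $M\ge2$. It is therefore covered by the smooth estimate \eqref{Cor00}. If one wants to cover the degenerate case $M=1$, one uses \eqref{CorII:31-02a} instead, together with $c_{11}(b)\ge \max\{1,p_M\}d/h_M$. This gives $h^{\alpha-1}p^{-\alpha+1/2}$ multiplied by $(h/p)^{1/2}$, which is again of order $h^{\alpha-1/2}p^{-\alpha}$, better than required. Collecting all terms yields \eqref{MainII-c-dp}.

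\textbf{Main obstacle.} The delicate step is justifying Lemma~\ref{ErrorLPiMinus} for the shifted index $\alpha-1$, which may lie in $(0,1)$. Two points need checking:
\begin{itemize}
\item the coefficient bound \eqref{duHatUn} must remain valid for small fractional order, with the restriction $n\ge\alpha$;
\item the hypothesis $0<\alpha-1\le p+1$ must be compatible with the theorem's assumption $p>\alpha-1$.
\end{itemize}
We would check that the constants in \eqref{HatUnCaseC0}--\eqref{duHatUn} stay bounded uniformly as the order tends to $0^+$, using \eqref{gamratioA}. We would also check that Lemma~\ref{LPiMinus} combines these bounds into the rate $p^{-(\alpha-1)-1/2}$ without losing the extra factor coming from the $\frac{2}{2p+1}$ weight.
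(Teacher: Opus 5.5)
Your proposal is correct and follows essentially the same route as the paper: you split \eqref{globalbound}--\eqref{ATcase} elementwise and apply Lemma~\ref{ErrorLPiMinus} through Corollary~\ref{CorII:31} with the $(h_1/2)^{\alpha}$ scaling on $I_1$. You then shift $\alpha\to\alpha-1$ for $\pi^+q$ via $|\CFrD{\alpha-1}{\theta\pm}u_x|=|\CFrD{\alpha}{\theta\pm}u|$, and use the smooth estimates on $I_j$, $j\ge 2$.

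You are more explicit than the paper about the boundary term in $A(T)$ (including $M=1$) and about the need for $\alpha>1$ in part (ii). However, the reason for $\alpha>1$ is positivity of the shifted order and $q\in W^{1,1}(I_1)$, not $q\in L^2$; for the model $|x-\theta|^{\alpha}$, $q\in L^2$ already holds for $\alpha>1/2$. Finally, the ``obstacle'' you flag is settled by the lemma's own hypothesis, since $0<\alpha-1\le p+1$ follows from $1<\alpha<p+1$.
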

\begin{proof}
The proof follows the lines of Theorem~\ref{MainEstiA}.
Using the  approximation results in Lemma \ref{ErrorLPiMinus}, we also estimate the terms in \eqref{globalbound}--\eqref{ATcase} via the Gauss-Radau projections
 $\pi^{ \pm} u|_{I_j}=\pi_{p_1}^{\pm} u|_{I_j}$ with the emphasis on
  the first sub-interval
 $I_1=(a,x_1).$
Indeed, we obtain from Theorem~\ref{ErrorLPiMinus} and \eqref{CorII:31-01} that
\begin{equation}\label{unfit0}
\begin{split}
\|(u-\GProj_{p_1}^-u)(\cdot,t)\|_{I_1}
&=\Big(\frac{h_1}{2}\Big)^{1/2}\|\omega-\Proj_{p_1}^-\omega\|_{L^2(I_{\rm ref})}
\\
&\le C\,h_1^{1/2}\,p_1^{-\alpha-1/2}\,\rFraSNorm
\\
&\le C\,h_1^{\alpha+1/2}\,p_1^{-\alpha-1/2}\,U_{\theta}^{\alpha}(u(\cdot,t);I_1),
\end{split}
\end{equation}
where we used for $0<h_1<2,$
\begin{equation*}
\rFraSNorm
\le \Big(\frac{h_1}{2}\Big)^{\alpha}\,U_{\theta}^{\alpha}(u(\cdot,t);I_1).
\end{equation*}
The same argument applied to $u_t$ yields
\begin{equation}\label{Intut0}
\|( \GProj_{p_1}^-u_t-u_t)(\cdot,t)\|_{I_1}
\le C\,h_1^{\alpha+1/2}\,p_1^{-\alpha-1/2}\,U_{\theta}^{\alpha}(u_t(\cdot,t);I_1).
\end{equation}

(i) When $d=0$, all terms involving $q$ in \eqref{globalbound} are absent.
Thus, using \eqref{Th45-1} and \eqref{Th45-2} yields
\begin{equation}\label{interr-d0-1}
\begin{aligned}
&\sup_{0\le t\le T}\|(\GProj^{-}u-u)(\cdot,t)\|_{I_1}
+\int_{0}^{T}\|(\GProj^{-}u_t-u_t)(\cdot,t)\|_{I_1}\,\rd t
+\|\GProj^{-}u_{\rm ic}-u_{\rm ic}\|_{I_1}
\\
&\qquad\le
C
\frac{h_1^{\alpha+1/2}}{p_1^{\alpha+1/2}}
\Big(\sup_{0\le t\le T}U^{\alpha}_{\theta}(u(\cdot,t);I_1)
+\int_{0}^{T}U^{\alpha}_{\theta}(u(\cdot,t);I_1)\,\rd t\Big),
\end{aligned}
\end{equation}
which leads to the contribution of the errors in the first interval for the desired estimate in \eqref{MainII-c-d0}.

\medskip

{(ii) When $d\neq0$,}
set $q=\sqrt{d}\,u_x$ and the proof follows the same lines as in Theorem~\ref{MainEstiA} (ii).
Using \eqref{ErrorLPiMinus-0}, we obtain that
\begin{equation}\label{interr-dp-0}
\|(\GProj^{+}_{p_1} q-q)(\cdot,t)\|_{I_1}
\le C\,\sqrt d\,h_1^{\alpha-1/2}
p_1^{-\alpha+1/2}
U^{\alpha}_{\theta}(u(\cdot,t);I_1).
\end{equation}
With the aid of the above relevant estimates, we can obtain
\begin{equation}\label{interr-dp-1}
\begin{aligned}
&\sup_{0\le t\le T}\|(\GProj^{-}u-u)(\cdot,t)\|_{I_1}
+\int_{0}^{T}\|(\GProj^{-}u_t-u_t)(\cdot,t)\|_{I_1}\,\rd t
+\|\GProj^{+}q-q\|_{L^2(I_1\times(0,T))}
\\
&\qquad\quad
+\|\GProj^{-}u_{\rm ic}-u_{\rm ic}\|_{I_1}
\le
C
\frac{h^{\alpha-1/2}}{p^{\alpha-1/2}}
\Big(\sup_{0\le t\le T}U^{\alpha}_{\theta}(u(\cdot,t);I_1)
\\
&\qquad\quad\qquad\quad\qquad\quad\qquad\quad
+\int_{0}^{T}U^{\alpha}_{\theta}(u(\cdot,t);I_1)\,\rd t
+\sqrt{d}\,\|U^{\alpha}_{\theta}(u(\cdot,t);I_1)\|_{L^2(0,T)}\Big),
\end{aligned}
\end{equation}
which yields the error bound in \eqref{MainII-c-dp} on $I_1.$

For the other sub-intervals $I_j$, $j=2,\dots,M$, the analysis is the same as in the proof of Theorem~\ref{MainEstiA}. We therefore skip the details, and obtain the same estimates.
This completes the proof.
\end{proof}

\subsection{Numerical results}\label{Subsec43}
We conclude this section with additional numerical results, complementing Figure~\ref{Figs_1}, to further demonstrate the optimal convergence behavior of the LDG scheme for the aforementioned singularities.
In all tests below, we take $\Omega=(0,1)$ and $T=1,$ and use the same time-stepping scheme as in Figure~\ref{Figs_1}.

For comparison, we first consider Theorem~\ref{MainEstiA} with the singular solution:  $u(x,t)=x^{\pi} e^{2+\sin(x)-t}$ with $m=\infty$ in  Figure~\ref{Figs4-1} (left), and  observe the optimal convergence order in $p$ as estimated in  Theorem~\ref{MainEstiA} (also see Figure~\ref{Figs_1} for the same convergence behavior). As noted in  Remark~\ref{sing-rmk}, one may lose the $(2\pi+1)$th-order for $d=0$, but only have an order of $\pi+m-1/2$ if $m\le \pi+3/2,$ likewise for $d\not=0.$  To illustrate this, we test the LDG scheme with the exact solution:
$$ u(x,t)={\mathcal I}_{0
+}^{\pi} H(\zeta-x)\,t=\frac{1}{\Gamma(\pi+1)}\big(x^{\pi}-(x-\zeta)_+^{\pi+1}\big) t,$$
where $H$ is  the Heaviside function and $ (x-\zeta)_+ := \max\{x-\zeta,\,0\}.$ Note that
$\CFrD{\pi}{0+} u= H(\zeta-x) t\in W^{1,1}(I_1)$ for $\zeta>0,$ so we have $u \in  {\mathcal U_{0+}^{\pi, m}}(I_1)$ with $m=1.$ Then we infer from Theorem \ref{MainEstiA} that the convergence order in $p$ is
${\mathcal O}(p^{-\pi-1/2})$ for $d=0$ and ${\mathcal O}(p^{-\pi-1/2})$ for $d\not=0,$ respectively, which is in a good agreement with the numerical results in Figure~\ref{Figs4-1} (right).
\begin{figure}[!ht]
	\begin{center}
		{~}\hspace*{-20pt}	\includegraphics[width=0.45\textwidth]{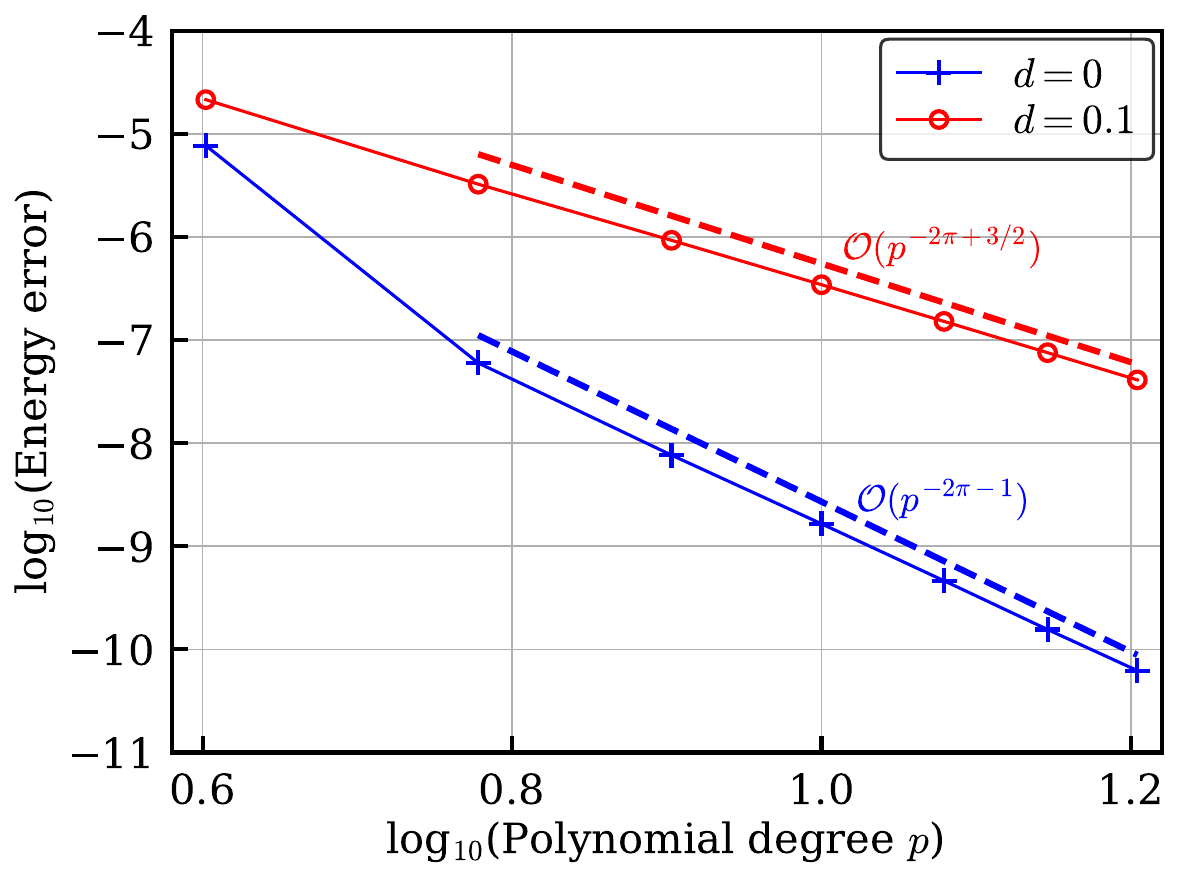}
		\hspace{10pt}
\includegraphics[width=0.45\textwidth]{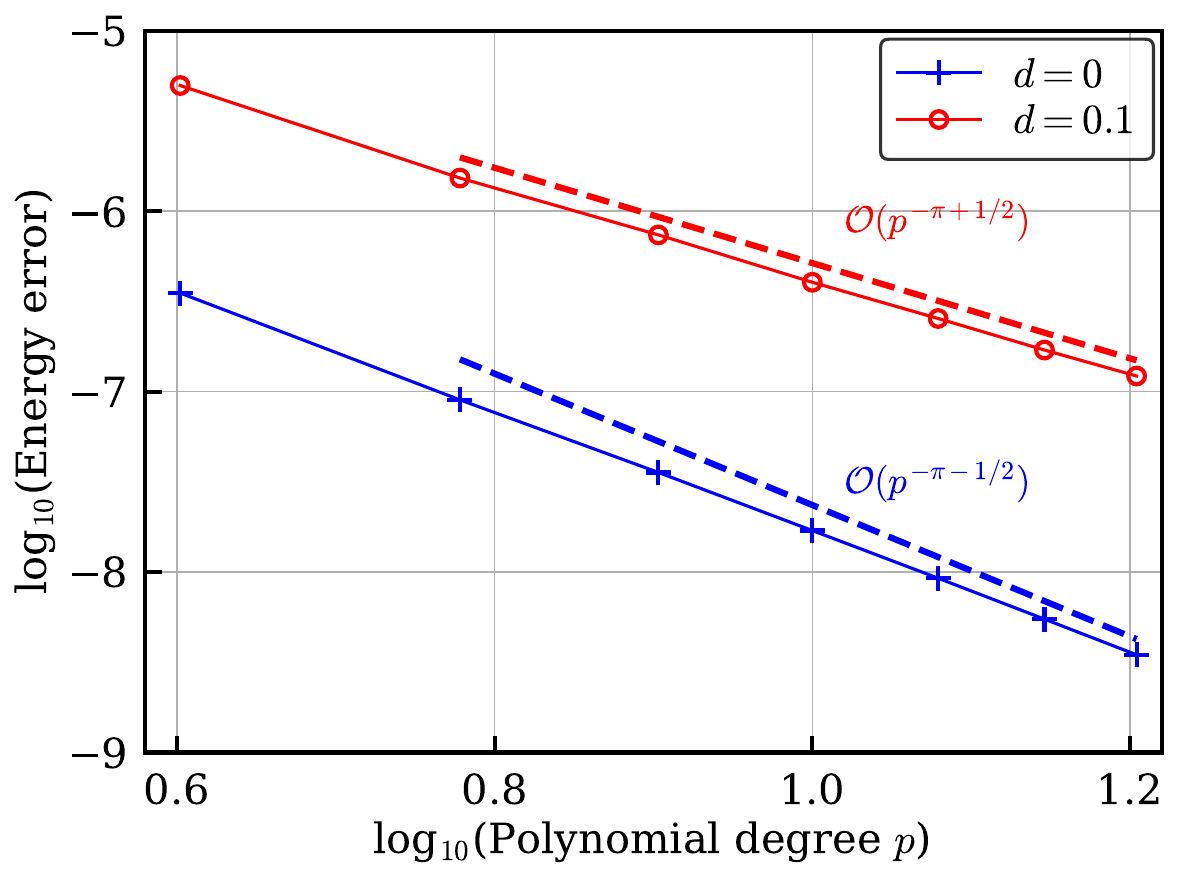}
	\caption{The convergence of the $p$-version for the non-smooth exact solutions, where $u(x,t)=x^{\pi} e^{2+\sin(x)-t}$ (left), and
    $u(x,t)={\mathcal I}_{0+}^{\pi} H(\zeta-x) \,t$ with  $\zeta = (x_0 + x_1)/2$ (right).
    The convection coefficient is $c=0.1$ and the diffusion coefficient is $d=0.1$ and $d=0.$
}
\label{Figs4-1}
	\end{center}
\end{figure}

We next conduct some numerical tests to validate Theorems~\ref{MainEstiB}--\ref{MainEstiC}. We choose the exact solution:  $u(x,t)=|x-\zeta|^{\pi}\, e^{2+\sin(x)-t}$ and consider (i) fitted case:
$\zeta=x_1$  and (ii) unfitted case:
 $\zeta= (x_0 + x_1)/2.$  We illustrate in
  Figure~\ref{Figs4-2} the convergence for both cases, which again perfectly match the predicted order. Indeed, the fitted case leads to a higher order than the unfitted case. Surprisingly, we observe from
  Figure~\ref{Figs4-1} (left) and  Figure~\ref{Figs4-2} (left) that there is a lost of half order for $d\not =0,$
  for the left-end point singularity and interior singularity on the grid.
  \begin{figure}[!ht]
	\begin{center}
		{~}\hspace*{-20pt}	 \includegraphics[width=0.45\textwidth]{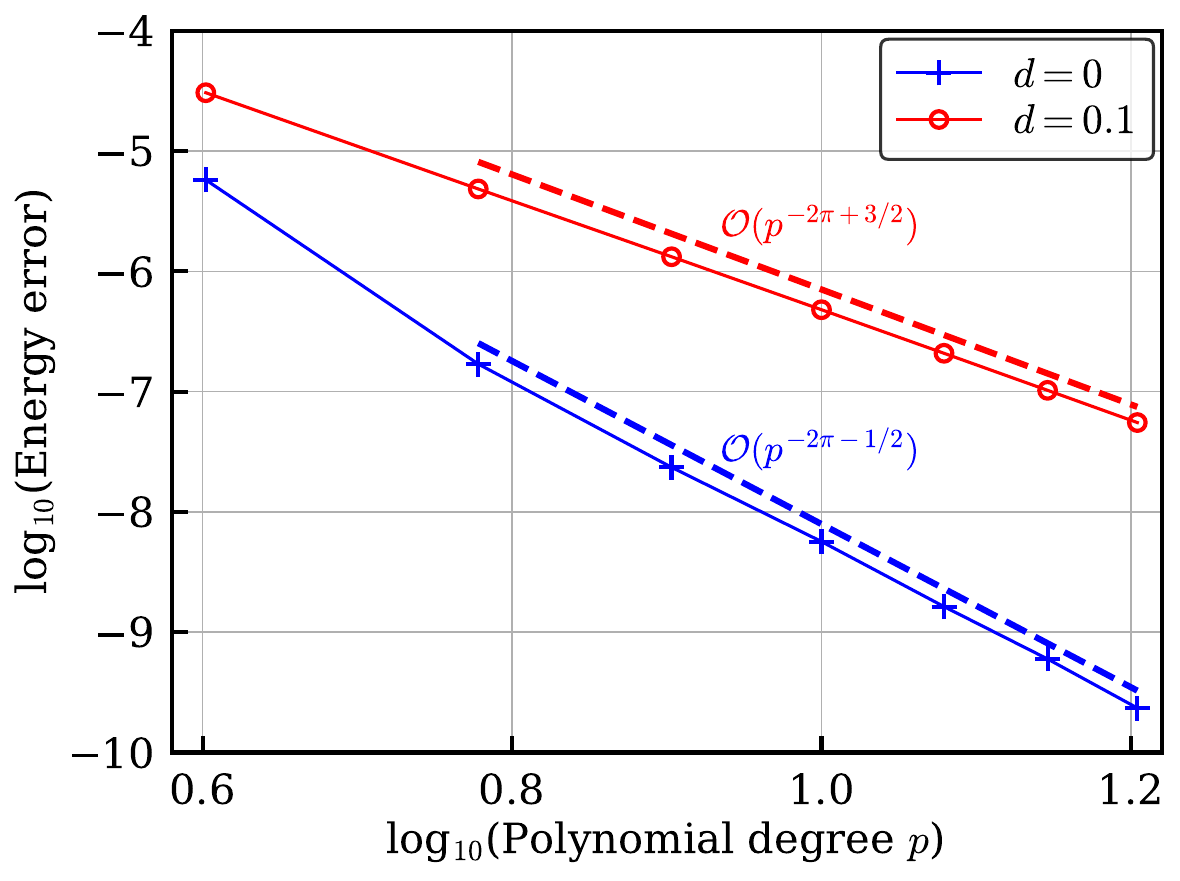}
		\hspace{10pt}
		\includegraphics[width=0.45\textwidth]{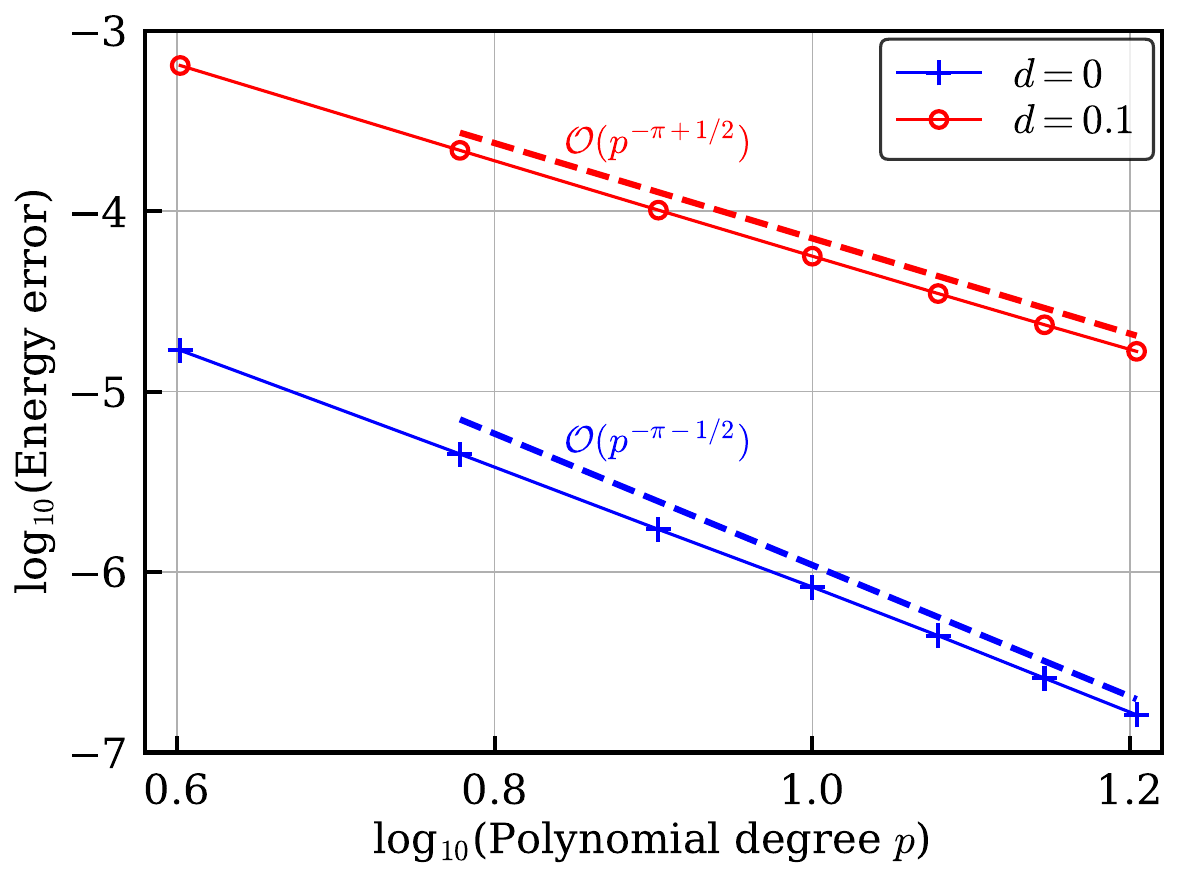}
	\caption{
    The convergence of the $p$-version for the nonsmooth exact solutions $u(x,t)=|x-\zeta|^{\pi} e^{2+\sin(x)-t}$, where $\zeta=x_1$  on the left side and $\zeta= (x_0 + x_1)/2$ on the right side.
    The convection coefficient is $c=0.1$ and the diffusion coefficient is $d=0.1$ and $d=0.$
}
\label{Figs4-2}
	\end{center}
\end{figure}

\section{Concluding remarks}
\label{ConcludingRem}
%In this paper, we have obtained optimal error estimates for the $p$-version of the LDG method for  a one-dimensional convection-diffusion equation.
%We derived  new fractional spaces to deal with the nonsmooth solutions.
%The optimal estimates of Gauss–Radau projections $\pi^+$ and $\pi^-$ with interior or endpoint
%singularities in new fractional spaces were proved.
%Extensions to multidimensional case is challenging, which constitute of our future work.
%In this paper, we have obtained optimal error estimates for the $p$-version of the LDG method applied to a one-dimensional convection-diffusion equation. We derived new fractional spaces to handle nonsmooth solutions. The optimal estimates of Gauss--Radau projections $\pi^+$ and $\pi^-$ with interior or endpoint singularities in these new fractional spaces were proved. Extending these results to the multidimensional case remains challenging and constitutes a significant part of our future work.

\begin{table}[h!]
\centering
\renewcommand{\arraystretch}{1.15}
\caption{$p$--version rates for the \emph{dominant singular term} (fixed $h$).}
\label{tab:p_orders_singular}
\begin{tabular}{ccc}
\hline
\textbf{Singularities} & \textbf{$d=0$} & \textbf{$d\neq 0$} \\
\hline
Left-endpoint (Theorem~\ref{MainEstiA})
& $\min\{2\alpha+1,\ \alpha+m-\tfrac12\}$
& $\min\{2\alpha-\tfrac32,\ \alpha+m-\tfrac32\}$ \\

Fitted interior   (Theorem~\ref{MainEstiB})
& $\min\{2\alpha+\tfrac12,\ \alpha+m-\tfrac12\}$
& $\min\{2\alpha-\tfrac32,\ \alpha+m-\tfrac32\}$ \\

Unfitted interior  (Thorem~\ref{MainEstiC})
& $\alpha+\tfrac12$
& $\alpha-\tfrac12$ \\
\hline
\end{tabular}

\vspace{0.3em}
\begin{minipage}{0.95\linewidth}
\footnotesize
$\alpha$: singularity exponent in $u(x,t)=\mathcal{O}(|x-x_j|^\alpha)$ near $x_j$.\\
$m$: an additional regularity index.
% integer regularity index for $\CFrD{\alpha}{}u$ in the fractional spaces.
\end{minipage}
\end{table}

The $hp$ local discontinuous Galerkin (LDG) method of Castillo et al.~\cite{Castillo2002MC} is an efficient scheme for convection–diffusion equations, but the available analysis for nonsmooth solutions predicts a loss of one order in the $p$–convergence.
In this paper we revisit this issue in a one–dimensional setting and close the gap between theory and computation by deriving new approximation results for the associated Gauss–Radau projections in fractional Sobolev spaces.
Within this framework we analyze the Legendre polynomial expansion of singular solutions and obtain sharp projection estimates, which in turn yield $p$–optimal a priori LDG error bounds for left–endpoint singularities and distinct $p$–rates in the fitted and unfitted interior cases, depending on whether the singular point lies at a mesh node or strictly inside an element.

The resulting $p$-version convergence orders for the dominant singular term (for fixed $h$) are summarized in Table~\ref{tab:p_orders_singular}.
For both the purely convective case $(d=0)$ and the convection--diffusion case $(d\neq 0)$, the table reports the attainable order in $p$ as a function of the singularity exponent $\alpha$, the additional regularity index $m$, and the location of the singular point.
The numerical experiments in Subsection~\ref{Subsec43} confirm these rates and demonstrate full consistency between the theoretical analysis and the observed LDG errors.
The approximation framework developed here arises naturally from the structure of Gauss–Radau projections and provides a useful tool for studying singular solutions in DG methods, shedding light on how $p$–suboptimality reported for other DG schemes may be resolved in more general settings.

 \end{document}